\newtheorem{theorem}{Theorem}
\newtheorem{lemma}[theorem]{Lemma}
\newtheorem{definition}[theorem]{Definition}
\newtheorem{remark}{Remark}
\numberwithin{equation}{section}
\numberwithin{theorem}{section}
\numberwithin{subsection}{section}
\newcommand{\intQT}{\int_0^T\int_{\Omega}}
\newcommand{\R}{\mathbb{R}}
\newcommand{\pa}{\partial}
\newcommand{\LO}[1]{L^{#1}(\Omega)}
\newcommand{\LQ}[1]{L^{#1}(Q_T)}
\newcommand{\red}{\textcolor{black}}
\newcommand{\bra}[1]{\left(#1\right)}
\newcommand{\sumi}{\sum_{i=1}^{m}}
\newcommand{\intO}{\int_{\Omega}}
\newcommand{\eps}{\varepsilon}
\newcommand{\vt}{\vartheta}
\title[Quasi-linear reaction-diffusion systems]{On quasi-linear reaction diffusion systems\\ arising from compartmental SEIR models}
\author[J. Yang]{Juan Yang}
\address{Juan Yang \hfill\break
    School of Mathematics and Statistics, Lanzhou University,
    Lanzhou, 730000, PR China,
    \hfill\break
	Institute of Mathematics and Scientific Computing, University of Graz,
	Heinrichstrasse 36, 8010 Graz, Austria}
\email{yangjuan18@lzu.edu.cn, yangjuan0912@gmail.com}
\author[J. Morgan]{Jeff Morgan}
\address{Jeff Morgan \hfill\break
	Department of Mathematics,
	University of Houston, Houston, Texas 77204, USA}
\email{jmorgan@math.uh.edu}
\author[B.Q. Tang]{Bao Quoc Tang}
\address{Bao Quoc Tang \hfill\break
	Institute of Mathematics and Scientific Computing, University of Graz,
	Heinrichstrasse 36, 8010 Graz, Austria}
\email{quoc.tang@uni-graz.at, baotangquoc@gmail.com}
\begin{document}

\begin{abstract}
	
	The global existence and boundedness of solutions to quasi-linear reaction-diffusion systems are investigated. The system arises from compartmental models describing \red{the spread} of \red{infectious diseases} proposed in \cite{viguerie2021simulating,viguerie2020diffusion}, where the diffusion rate is assumed to depend on the total population, leading to quasilinear diffusion with possible degeneracy. The mathematical analysis of this model has been addressed recently in \cite{auricchio2023well} where it was essentially assumed that all sub-populations diffuse at the same rate, which \red{yields} a positive lower bound of the total population, thus removing the degeneracy. In this work, we remove this assumption completely and show the global existence and boundedness of solutions by exploiting a recently developed $L^p$-energy method. Our approach is applicable to a larger class of systems and is sufficiently robust to allow model variants \red{and different} boundary conditions.
\end{abstract}

\maketitle
\noindent{\small{\textbf{Classification AMS 2010:} 35A01, 35K57, 35K59, 35Q92}}

\noindent{\small{\textbf{Keywords:} Quasi-linear reaction-diffusion systems;  Global existence; $L^p$-energy methods; Mass control.  }}

\tableofcontents

\section{Introduction}
\subsection{Mathematical analysis of quasilinear SEIRD models}

Modelling the spatial spread of infectious disease is a classical topic and has recently attracted much more interest from the mathematical community due to the Covid-19 epidemic. Classical models include \red{the} susceptible-infected-removed (SIR) system and its \red{variants}, such as susceptible-exposed-infected-removed (SEIR) or susceptible-exposed-infected-removed-deceased (SEIRD). To account \red{for} spatial heterogeneity, a compartmental model was proposed in \cite{viguerie2021simulating,viguerie2020diffusion} where the diffusion coefficients are highly heterogeneous and depend on the total population density, leading to a \textit{quasi-linear} reaction-diffusion system. The simulations therein showed a strong qualitative agreement from the forecast using the model and collected data in Lombardy.

\medskip

Let $\Omega \subset \mathbb R^N$, $N\ge 1$, be a bounded domain\footnote{The domain $\Omega$ was considered in $\R^2$ and $\R^3$ in \cite{viguerie2021simulating} and \cite{auricchio2023well}, respectively. For the current work, we can deal with the case of arbitrary dimension.} with smooth boundary $\partial\Omega$ \red{such that $\Omega$ lies locally on one side of $\pa\Omega$}. Let $s(x,t)$, $e(x,t)$, $i(x,t)$, $r(x,t)$, and $d(x,t)$ denote the susceptible, exposed, infected, recovered, and deceased population densities, respectively, at spatial position $x\in\Omega$ and at time $t>0$, and \red{let} $n(x,t):=s(x,t)+e(x,t)+i(x,t)+r(x,t)$ \red{be} the total living population density. The proposed model in \cite{viguerie2021simulating} reads as
\begin{equation}\label{sys-m}
  \begin{cases}
   \pa_t s= \alpha n - (1 - A_0/n) \beta_i si - (1 - A_0/n) \beta_e se - \mu s + \nabla\cdot(n \nu_s \nabla s)\\
   \pa_t e= (1 - A_0/n) \beta_i si + (1 - A_0/n) \beta_e se - \sigma e - \phi_e e - \mu e + \nabla\cdot(n \nu_e \nabla e)\\
   \pa_t i = \sigma e - \phi_d i - \phi_r i - \mu i + \nabla\cdot(n \nu_i \nabla i)\\
   \pa_t r = \phi_r i + \phi_e e - \mu r + \nabla\cdot(n \nu_r \nabla r)\\
   \pa_t d = \phi_d i \,,
  \end{cases}
\end{equation}
where the positive diffusion coefficients $\nu_s, \nu_e, \nu_i, \nu_r$, the birth rate $\alpha$, the inverse of the incubation period $\sigma$, the asymptomatic recovery rate $\phi_e$, the infected recovery rate $\phi_r$, the infected mortality rate $\phi_d$, the asymptomatic contact rate $\beta_e$, the symptomatic contact rate $\beta_i$, and the general mortality rate $\mu$ are positive and may depend on time and space.

The mathematical analysis of \eqref{sys-m} \red{was} first studied in \cite{auricchio2023well}. One \red{distinguishing characteristic} of this system, in comparison with many other variants of SIR models in the literature, is that the diffusion of all species depends on the total population density $n$, which makes \eqref{sys-m} a \textit{quasilinear} reaction-diffusion system. This also brings possible degeneracy to the diffusion operators and makes the problem more challenging. The global existence and boundedness of solutions to \eqref{sys-m} was shown in \cite{auricchio2023well} under the following assumptions and modifications:
\begin{itemize}
	\item[(i)] all diffusion rates are the same\footnote{In fact, \cite{auricchio2023well} generalised the diffusion rate to $\kappa(n)$ for a continuous function $\kappa(\cdot)$, but it does not alter the fact that the diffusion rates must be the same so that the technique therein can be applied.}, i.e. $\nu_s = \nu_e = \nu_i = \nu_r = \nu$,
	\item[(ii)] the terms $(1-A_0/n)$ and $\phi_di$ are replaced by a non-singular function $A(n)$, for instance $A(n) = (1-A_0/n)_+$, and $\phi_dni$, respectively.
\end{itemize}
The replacement of $(1-A_0/n)$ \red{avoids} the singularity when $n$ gets close to zero. A closer examination later reveals that this is not necessary since one can estimate
$(si)/n \le s$ and $(se)/n\le e$. The assumptions of the same diffusion rate and replacement of $\phi_di$ are, on the other hand, essential as they help to obtain a \textit{key property} that the total population density $n$ is bounded \textit{pointwise} from below by a positive constant for all time, provided the initial density $n_0$ is also bounded from below. Indeed, summing the equations of $s$, $e$, $i$, and $r$, keeping in mind the aforementioned modifications, yields an equation for $n$ of the form
\begin{equation*}
	\partial_tn - \nabla\cdot(\nu n\nabla n) = -\phi_din + (\alpha-\mu)n \red{\geq (\alpha-\mu)n-\phi_d n^2}
\end{equation*}
from which the lower bound \red{for $n$ on finite time intervals} follows from \red{the} lower bound of initial data. With this lower bound of $n$, the diffusion operators become non-degenerate and the analysis can be carried out in a standard way. \red{This strategy used in \cite{auricchio2023well} seems to break down   when the diffusion rates are different, but it still applies when the term $-\phi_d i$ stays in place, since in this case
\begin{equation*}
  \partial_tn - \nabla\cdot(\nu n\nabla n) = -\phi_di + (\alpha-\mu)n \geq (\alpha-\mu-\phi_d)n.
\end{equation*}
So $n$ is bounded from below on finite time intervals if $n(x,0)\geq\alpha>0$ for all $x\in\Omega$}.

\medskip
In this paper, we set out to remove the assumptions (i) and (ii) above and show the global existence and boundedness of solutions to the original system \eqref{sys-m}. In fact, we show the results for a much larger class of quasilinear reaction-diffusion systems which contains \eqref{sys-m} as a special case. Our key idea is to exploit a recently developed $L^p$-energy approach in e.g. \cite{morgan2021global,fitzgibbon2021reaction} which \red{does} not require any lower bound on $n$ except for its natural nonnegativity. In the next subsection, we provide the general setting while the main results and ideas are presented in subsection \ref{subsec1.3}.


\subsection{Problem setting}\label{subsec1.2}
Let $1\leq N\in\mathbb{N}$, and $\Omega\subset\mathbb{R}^N$ be a bounded domain with Lipschitz boundary $\pa \Omega$. Let $2\leq m\in\mathbb{N}$. In this paper, we study the global existence and boundedness of the following \textit{quasi-linear reaction-diffusion system} of concentrations $u=(u_1, \cdots, u_m)$, for any $i\in\{1,\cdots, m\}$,
\begin{equation}\label{system}
	\begin{cases}
		\pa_t u_i-\nabla\cdot(D_i(x,t)\Phi(u)\nabla u_i)=f_i(x,t,u),  &x\in \Omega, t>0\\
		(D_i(x,t)\Phi(u)\nabla u_i)\cdot \eta=0,   &x\in \pa \Omega, t>0\\
		u_i(x,0)=u_{i,0}(x),   &x\in \Omega,
	\end{cases}
\end{equation}
where \red{$\eta$} is the unit outward normal vector on $\pa\Omega$, initial date $u_{i,0}$ are bounded and non-negative,  the diffusion matrix $D_i$: $\Omega\times [0,\infty)\rightarrow\mathbb{R}^{N\times N}$ satisfies
\begin{equation}\label{D:cond-1}
	\lambda |\xi|^2\leq \xi^\top D_i(x,t)\, \xi, \quad \forall (x,t)\in \Omega\times[0,\infty), ~\forall \xi\in\mathbb{R}^N, ~\forall i=1,\cdots,m
\end{equation}
for some $\lambda>0$, and for each $T>0$,
\begin{equation}\label{D:cond-2}
	D_{i}\in L^{\infty}(\Omega\times(0,T); \R^{N\times N}), \quad\forall i=1,\cdots,m,
\end{equation}
and $\Phi(u)$ satisfies:
\begin{enumerate}[label=(Q\theenumi),ref=Q\theenumi]
	\item \label{Q1} $\Phi(u): \R^m_+\rightarrow \R_+$ is continuous;
	\item \label{Q2} There is some $b\geq0$ and $M>0$, such that
	\begin{equation*}
		\Phi(u)\geq M \,u_i^b, \quad \forall  u \in \mathbb R_+^m \text{ and } i=1,\cdots,m;
	\end{equation*}
	\item \label{Q3} There exist $\pi>0$ and $\widetilde{M}>0$ such that
	\begin{equation*}
		\Phi(u) \leq \widetilde{M} \bra{1+\sumi u_i^{\pi}}, \quad \forall  u\in \mathbb R_+^m.
	\end{equation*}
\end{enumerate}
The nonlinearities $f_i(x, t, u): \Omega\times\mathbb{R}_+\times\mathbb{R}^m_+\rightarrow\mathbb{R}$ satisfy the following conditions:
\begin{enumerate}[label=(A\theenumi),ref=A\theenumi]
	\item  \label{A1} For any $i=1,\cdots,m$ and any $(x,t)\in\Omega\times\mathbb{R}_+$, ~$f_i(x,t,\cdot): \mathbb{R}^m\rightarrow \mathbb{R}$  is locally Lipschitz continuous uniformly in $(x,t)\in\Omega\times(0,T)$ for any $T>0$;
	\item  \label{A2} For any $i=1,\cdots,m$ and any  $(x,t)\in\Omega\times\mathbb{R}_+$, ~$f_i(x,t,\cdot)$ is quasi-positive, i.e., $f_i(x,t,u)\geq0$ for all $u\in \R ^m_+$ with $u_i=0$ for all $i=1,\cdots,m$;
	\item  \label{A3} There exists $c_1,\cdots,c_m>0$ and $K_1,K_2\in\R$ such that
	\begin{equation*}
		\sumi c_if_i(x,t,u)\leq K_1\sumi u_i+K_2, \quad \forall (x,t,u)\in\Omega\times\R_+\times\R^m_+;
	\end{equation*}
	\item  \label{A4} There exist $K_3>0, r>0$, and a lower triangular matrix $A=(a_{ij})$ with positive diagonal entries, and nonnegative entries otherwise, such that, for any $i=1,\cdots,m$
	\begin{equation*}
		\sum^i_{j=1}a_{ij}f_j(x,t,u)\leq K_3 \bra{1+\sumi u_i^r}, \quad \forall (x,t,u)\in\Omega\times\R_+\times\R^m_+
	\end{equation*}
	(we call this assumption an \textit{intermediate sum of order $r$});
	\item  \label{A5} The nonlinearities are bounded above by a polynomial, i.e., there exists $l>0$ and $K_4>0$ such that
	\begin{equation*}
		f_i(x,t,u)\leq K_4 \bra{1+\sumi u_i^l},  \quad \forall (x,t,u)\in\Omega\times\R_+\times\R^m_+, ~\forall i=1,\cdots, m.
	\end{equation*}
\end{enumerate}

The local Lipschitz continuity \eqref{A1} of the nonlinearities implies the existence of a local solution to \eqref{system} on a maximal interval $[0, T_{\max})$.  The quasi-positivity assumption \eqref{A2} assures that the solution to \eqref{system} is non-negative (as long as it exists) if the initial data is non-negative,  which is a natural assumption since we consider here $u_i$ as concentrations or densities.  The assumption \eqref{A3} gives an upper bound on the total mass of the system.
Reaction-diffusion systems satisfying \eqref{A1}, \eqref{A2} and \eqref{A3} appear naturally in modeling many real life phenomena, ranging from chemistry, biology, ecology, or social sciences. Remarkably, these natural assumptions are not enough to ensure global existence of bounded solutions as it was pointed out by counterexamples in \cite{pierre2000blowup} and \cite{Pierre2023Examples} even for \textit{semilinear} systems, i.e. $\Phi(u)\equiv 1$ and $D_i(x,t)\equiv D_i$ in \eqref{system} where $D_i\ne D_j$ for some $i\ne j$. The study of global existence of semilinear systems, i.e. \eqref{system} with $\Phi(u)\equiv 1$, has made considerable progress in the last decade, see e.g. \cite{canizo2014improved,fellner2016Global,laamri2011global,Laamri2017Global,fellner2020global, fellner2019uniform, fitzgibbon2021reaction} and the survey \cite{Pierre2010Global}.

\medskip
One can readily check that all assumptions \eqref{Q1}--\eqref{Q3}, \eqref{A1}--\eqref{A5} are fulfilled for the SEIR system \eqref{sys-m} (excluding the equation of $d$ as it is uncoupled) provided that the diffusion rates are bounded from below by positive constants, and all other rates are nonnegative and bounded functions of $x$ and $t$. Indeed, by writing $u_1 = s$, $u_2 = e$, $u_3 = i$, $u_4 = r$, and $u = (u_1,\ldots,u_4)$, we have $\Phi(u) = u_1+u_2+u_3 + u_4$ satisfying \eqref{Q1}--\eqref{Q3} with $b = \pi = 1$. Now \eqref{A1}--\eqref{A3} and \eqref{A5} are obviously fulfilled. To check \eqref{A4}, we observe the first nonlinearity
\begin{align*}
	&\alpha n - (1 - A_0/n) \beta_i si - (1 - A_0/n) \beta_e se - \mu s\\
	&= \alpha n - \beta_i si + A_0\beta_i\frac{si}{n} - \beta_ese + A_0\beta_e \frac{se}{n} - \mu s\\
	&\le \alpha n  + (A_0\beta_i + A_0\beta_e)s
\end{align*}
where we used $si/n \le s$ and $se/n\le e$ and the nonnegativity of the rates. Thanks to the boundedness of the rates, we see that the first nonlinearity is bounded above by a linear combination of all components. It's immediate that the third and forth nonlinearities, as well as the sum of the first and second nonlinearities are bounded by a linear combination of all components, since all nonlinear terms are cancelled out when summing. Hence, by choosing $r = 1$ and the matrix
\begin{equation*}
	A = \begin{pmatrix}
		1 & 0 & 0 & 0\\
		1 & 1 & 0 & 0\\
		0 & 0 & 1 & 0\\
		0 & 0 & 0 & 1
	\end{pmatrix}
\end{equation*}
we see that \eqref{A4} is satisfied. It is noted that all these assumptions also hold if we replace $-\phi_di$ in \eqref{sys-m} by $-\phi_d in$ as done in \cite{auricchio2023well}. Therefore, \eqref{sys-m}, as well as the modified one in \cite{auricchio2023well}, is indeed a special case of the general system \eqref{system}.

\subsection{Main results}\label{subsec1.3}
Let us start with the first main result about the global existence and boundedness of system \eqref{system}.
\begin{theorem}\label{th1}
  Assume \eqref{D:cond-1}, \eqref{D:cond-2},  \eqref{Q1}, \eqref{Q2}, \eqref{Q3}, \eqref{A1}, \eqref{A2}, \eqref{A3}, \eqref{A5} and \eqref{A4} with
  \begin{equation}\label{growth-condition}
  1\leq r<1+b+\frac{2}{N}.
  \end{equation}
  Then for any nonnegative, bounded initial datum $u_0\in (L^{\infty}(\Omega))^m$, there exists a global weak solution to \eqref{system}
  with $u_i\in L^{\infty}_{loc}(0,\infty; L^{\infty}(\Omega))$ for all $i=1,\cdots,m.$  In particular, if $K_1<0$ or $K_1=K_2=0$, then the solution is bounded uniformly in time, i.e.
  \begin{equation}\label{Linftybound}
    \sup_{t\geq0}\|u_i(t)\|_{\LO{\infty}}<+\infty, \quad \forall i=1,\cdots,m.
  \end{equation}
\end{theorem}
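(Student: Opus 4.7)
The plan is to construct a global bounded solution by regularizing the possibly degenerate diffusion, deriving \emph{a priori} $L^\infty$ estimates uniform in the regularization parameter via an $L^p$-energy bootstrap in the spirit of \cite{morgan2021global,fitzgibbon2021reaction}, and then passing to the limit. I would regularize by $\Phi_\eps := \Phi + \eps$ and, if necessary, truncate the reactions at a large threshold; standard quasilinear parabolic theory yields a local classical nonnegative solution $u^\eps$, nonnegativity being preserved by \eqref{A2}. All subsequent \emph{a priori} work is carried out on $u^\eps$ with constants that must be controlled uniformly in $\eps$.

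The first bound is $L^1$: testing the combination $\sum_i c_i\cdot(\text{eq. }i)$ against $1$, using the no-flux boundary condition and \eqref{A3}, gives $\frac{d}{dt}\int_\Omega \sum_i c_i u_i \le K_1\int_\Omega \sum_i u_i + K_2|\Omega|$, so Gr\"onwall yields an $L^\infty_{\mathrm{loc}}(0,\infty; L^1(\Omega))$ bound on each $u_i$, independent of $t$ when $K_1<0$ or $K_1=K_2=0$.

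The technical heart is the $L^p$-energy step. Testing equation $i$ against $u_i^{p-1}$, integrating by parts, and using \eqref{Q2} with \eqref{D:cond-1} produces
$$\frac{1}{p}\frac{d}{dt}\|u_i\|_{L^p}^p + \frac{4\lambda M(p-1)}{(p+b)^2}\int_\Omega \bigl|\nabla u_i^{(p+b)/2}\bigr|^2 \le \int_\Omega u_i^{p-1}f_i(u).$$
The right-hand side cannot be closed species by species, because \eqref{A4} only controls the partial sums dictated by the lower-triangular matrix $A$; instead, I would work with a weighted functional $\sum_i \beta_i \|u_i\|_{L^p}^p$ with $\beta_i$ chosen compatibly with $A$, so that the nonlinear contribution collapses into $K_3\bigl(1+\sum_k u_k^{p-1+r}\bigr)$ modulo lower-order terms. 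Meanwhile, Gagliardo--Nirenberg applied to $v_i := u_i^{(p+b)/2}$, combined with the prevailing $L^q$ bound on $u_i$, converts the dissipation into a bound on $u_i$ in a strictly larger Lebesgue space. A direct scaling check shows that $r<1+b+2/N$ is precisely the subcriticality threshold at which this diffusion-induced gain strictly dominates the nonlinear loss; iterating over a sequence $p_0 = 1, p_1, p_2, \ldots \to \infty$ then yields $L^\infty_{\mathrm{loc}}(0,\infty; L^p(\Omega))$ bounds for every finite $p$, with constants independent of $\eps$ (and of $t$ in the dissipative regime $K_1\le 0$).

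A Moser--Alikakos iteration on the same $L^p$-energy inequality, with $p_k = 2^k p_0$, promotes the family of $L^p$ bounds to an $L^\infty$ bound uniform in $\eps$. Compactness via Aubin--Lions then allows sending $\eps\to 0$ and identifying the limit as a global weak solution of \eqref{system}; under $K_1<0$ or $K_1=K_2=0$ all constants are $t$-independent, which gives \eqref{Linftybound}. I expect the main obstacle to lie in the energy step: because the diffusion tensors $D_i$ differ across species, there is no scalar equation to telescope onto, so the lower-triangular structure from \eqref{A4} must be exploited at the level of the weighted $L^p$-energy itself, and one must verify that the choice of weights and the Gagliardo--Nirenberg exponents balance so that $r<1+b+2/N$ is exactly the subcritical condition which the bootstrap requires at every step.
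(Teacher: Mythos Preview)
Your overall architecture---regularize, obtain an $L^1$ bound from \eqref{A3}, bootstrap to $L^p$ via an energy argument exploiting \eqref{A4} and Gagliardo--Nirenberg, upgrade to $L^\infty$ by Moser iteration, then pass to the limit by Aubin--Lions---matches the paper. The regularization differs (the paper truncates the reactions via $f_i^\eps = f_i/(1+\eps\sum_j|f_j|)$ and does \emph{not} regularize $\Phi$), but this is not essential.

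The genuine gap is in the $L^p$-energy step. Your proposed functional $\sum_i \beta_i \|u_i\|_{L^p}^p$ with constant weights $\beta_i$ will not close under \eqref{A4}. Testing equation $i$ against $u_i^{p-1}$ produces $\int_\Omega u_i^{p-1} f_i(u)$, and the intermediate sum condition only controls linear combinations $\sum_j a_{ij} f_j$ with \emph{constant} coefficients, not combinations weighted by the $i$-dependent powers $u_i^{p-1}$. Concretely, for $f_1=-u_1u_2$, $f_2=u_1u_2$ (so $f_1+f_2=0$), the reaction contribution becomes $\int_\Omega u_1u_2(\beta_2 u_2^{p-1}-\beta_1 u_1^{p-1})$, which has no sign and is of order $p+1$; no choice of constants $\beta_i$ fixes this. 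The paper's remedy is to use the full mixed-polynomial energy
\[
\mathscr{L}_p[u]=\int_\Omega \sum_{|\beta|=p}\binom{p}{\beta}\theta^{\beta^2}u^\beta\,dx,
\]
whose time derivative, by Lemma~\ref{technoque-Lem-1}, factors as $\sum_{|\beta|=p-1}\binom{p}{\beta}\theta^{\beta^2}u^\beta\sum_k\theta_k^{2\beta_k+1}\partial_t u_k$. For each fixed multi-index $\beta$ the coefficients $\theta_k^{2\beta_k+1}$ are constants, so Lemma~\ref{lem-f-sum} converts $\sum_k\theta_k^{2\beta_k+1}f_k$ into $K_\theta(1+\sum_i u_i^r)$, and only then does the reaction term reduce to $C\int_\Omega(1+\sum_i u_i^{p-1+r})$. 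The price is that the diffusion now generates cross-gradient terms $\nabla u_k\cdot\nabla u_l$, and one must check (Lemma~\ref{technoque-Lem-2} and the positive-definiteness argument for $\widetilde B(\beta)$) that for $\theta_i$ large these are dominated by the diagonal dissipation. You correctly anticipated that the energy step is the crux, but the specific functional you wrote is the one that fails; the mixed-monomial construction is the missing idea.
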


\begin{remark}\hfill
\begin{itemize}
    \item When $b=0$, we have $\Phi(u)\geq M$. By letting $\Phi(u)=1$, our results cover the result in \cite[Theorem 1.1]{fitzgibbon2021reaction}.
	\item It is noted that under the assumption $K_1<0$ or $K_1 = K_2 = 0$, we can get the $L^1(\Omega)$-norm of the solution bounded uniformly in time (see Lemma \ref{L1-uni-bound}). This fact can be used to show the uniform-in-time bound \eqref{Linftybound}. \red{As a result,} if we can use other ideas or use other structure of the system to get the $L^1(\Omega)$-norm of the solution bound uniformly in time, we can remove the assumption $K_1<0$ or $K_1 = K_2 = 0$.
     \item It is noted that the key idea to prove global existence is to construct the $L^p$-energy function. Actually, the intermediate sum condition \eqref{A4} is essential to construct the $L^p$-energy function.
\end{itemize}
\end{remark}

\medskip
The proof of Theorem \ref{th1} is based on an $L^p$-energy approach. The \red{traditional} idea \red{has been} to construct an energy function that is decreasing or at least bounded in time for the \eqref{system} of the form
\begin{equation*}
\mathscr{L}[u] = \sumi \intO h_i(u_i) \,dx.
\end{equation*}
It is noted that if $h_i(z) \sim z^p$, this yields an $L^p$-estimate of the solution, and for $p$ large enough, using \red{a bootstrapping procedure, one obtains an} $L^\infty$-estimate which implies global existence. Unfortunately, this approach is very likely to fail under the general assumptions \eqref{A3}-\eqref{A4}, except for some very special cases.
The {\it duality method} (see e.g. \cite{Hollis1987Global,morgan1989global,morgan1990boundedness, Pierre2010Global,canizo2014improved,morgan2020boundedness}) is very efficient when dealing with systems with {\it constant or smooth diffusion coefficients}. Using this method, one gets an $L^{2+\eps}(\Omega\times(0,T))$-estimate from the mass control condition \eqref{A3}.
Combining this initial estimate, another duality argument (see \cite{morgan2020boundedness}) and bootstrap argument, using the intermediate sum condition \eqref{A4} and the growth assumption \eqref{A5}, one can obtain an  $L^\infty(\Omega\times(0, T))$-estimate that ensures global existence (more details can be found in \cite{morgan2020boundedness}).
 \red{It is noted that this method does not extend} to the case of merely bounded measurable or quasi-linear diffusion coefficients, unless some additional regularity assumptions are given (see \cite{desvillettes2007global,bothe2017global}). Recently, an $L^p$-energy approach \red{was} given in \cite{fitzgibbon2021reaction,morgan2021global},
whose preliminary ideas have been used previously in, e.g., \cite{malham1998global,kouachi2001existence}. It is remarked that a similar method has also been successfully applied to cross-diffusion systems, see \cite{laurenccot2022bounded}. The core idea of this $L^p$-energy method is to construct a generalized $L^p$-energy function of the following form:
\begin{equation*}
  \mathscr{L}_p[u](t)=\int_{\Omega} \mathscr{H}_p[u](t) \,d x,
\end{equation*}
where \red{$p\in \mathbb{N}$ with $p\geq 2$ and}
\begin{equation*}
\mathscr{H}_p[u](t)=\sum_{\beta \in \mathbb{Z}_{+}^m,|\beta|=p}\left(\begin{array}{l}
p \\
\beta
\end{array}\right) \theta^{\beta^2} u(t)^\beta
\end{equation*}
with
\begin{equation*}
u(t)^\beta = \prod_{i=1}^{m}u_i^{\beta_i}, \theta^{\beta^2}= \prod_{i=1}^{m}\theta_i^{\beta^2_i} \text{ and }
\left(\begin{array}{l}
p \\
\beta
\end{array}\right)=\frac{p !}{\beta_{1} ! \cdots \beta_{m} !},
\end{equation*}
where $\theta=\left(\theta_1, \ldots, \theta_m\right)$ and $\theta_1, \ldots, \theta_m$ are positive real numbers which will be determined later. The function $\mathscr L_p[u]$ contains all (mixed) multi-variable polynomials of order $p$ with carefully chosen coefficients. Thanks to the non-negativity of the solution, $(\mathscr{L}_p[u])^{1/p}$ is an equivalent $L^p$-estimate of $u$. Certainly, the difficulty is to choose these coefficients so that they are compatible with both diffusion and reactions. In our case, the assumptions on the quasilinear diffusion \eqref{Q2} and the intermediate sum condition \eqref{A4} allow us construct such an energy function.

\medskip
Comparing to \cite{fitzgibbon2021reaction,morgan2020boundedness}, this work extends this $L^p$-energy method to the case of {\it degenerate quasi-linear diffusion coefficients}.  Moreover, it is also shown that this method is sufficiently robust to model variants and different boundary conditions.

\medskip
Conditionally, if we can, by using a specific structure, \red{get a better a prior estimate} of solutions, the exponent $r$ in the intermediate sum condition \eqref{A4} can be enlarged. This is contained in the following theorem.

\begin{theorem}\label{th2}
  Assume \eqref{D:cond-1}, \eqref{D:cond-2},  \eqref{Q1}, \eqref{Q2}, \eqref{Q3}, \eqref{A1}, \eqref{A2}, \eqref{A4} and \eqref{A5}.
  Suppose that there exists a constant $a\geq 1$ such that
  \begin{equation}\label{condi-u-1}
    \|u_i\|_{L^{\infty}(0,T;\LO{a})}\leq \mathscr{F}(T), \quad {\forall i=1,\cdots,m}
  \end{equation}
  and
  \begin{equation}\label{condi-growth1}
    1\leq r<1+b+\frac{2a}{N}.
  \end{equation}
  Then for any nonnegative, bounded initial datum $u_0\in (L^{\infty}(\Omega))^m$, there exists a global weak solution to \eqref{system}
  with $u_i\in L^{\infty}_{loc}(0,\infty; L^{\infty}(\Omega))$ for all $i=1,\cdots,m.$  In particular, if $\sup_{T\geq0}\mathscr{F}(T)<\infty$, then the solution is bounded uniformly in time, i.e.
  \begin{equation}
    \sup_{t\geq0}\|u_i(t)\|_{\LO{\infty}}<+\infty, \quad \forall i=1,\cdots,m.
  \end{equation}
\end{theorem}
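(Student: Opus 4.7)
The proof naturally follows the $L^p$-energy blueprint outlined in the discussion of Theorem \ref{th1}, with the supplied $L^a$-estimate \eqref{condi-u-1} playing the role that the $L^1$-bound produced from \eqref{A3} did there; since \eqref{A3} is absent here, the argument proceeds by tracking where the $L^1$-information entered the chain of inequalities and replacing it by the $L^a$-information. Correspondingly, the growth restriction relaxes from $1+b+2/N$ to $1+b+2a/N$, because the Gagliardo--Nirenberg interpolation against the given control becomes more forgiving when the base Lebesgue index is larger.

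After a standard regularisation/truncation step, justified by \eqref{A1}, \eqref{A2} and \eqref{A5}, producing smooth nonnegative approximate solutions on $[0,T]$ for each $T>0$, I would study the functional
\begin{equation*}
\mathscr{L}_p[u](t) = \intO \sum_{\beta \in \mathbb{Z}_+^m,\,|\beta|=p} \binom{p}{\beta} \theta^{\beta^2} u(t)^{\beta}\, dx
\end{equation*}
for integers $p\geq 2$ and positive weights $\theta_i$ yet to be tuned. Differentiating in time, integrating by parts using the no-flux boundary condition in \eqref{system}, and invoking \eqref{D:cond-1} together with \eqref{Q2}, the diffusion term produces a dissipation of order $c_p \sum_{|\beta|=p} \intO |\nabla v_\beta|^2\, dx$, where each $v_\beta$ is a monomial in $u$ of total degree $(p+b)/2$; this comes from $\Phi(u)\,u_i^{p-2}|\nabla u_i|^2 \geq M u_i^{p+b-2}|\nabla u_i|^2$ and a rearrangement of the binomial weights. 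The reaction part is controlled via the lower-triangular intermediate-sum condition \eqref{A4}: the $\theta_i$ are chosen small enough in a descending order dictated by the triangular structure of $A$, so that the off-diagonal reaction contributions at each layer are absorbed by the diagonal ones of preceding layers, leaving an upper bound of order $\intO \bra{1+\sumi u_i^{p+r-1}}\, dx$.

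The decisive step is a Gagliardo--Nirenberg interpolation: writing $\intO u_i^{p+r-1}\, dx$ as $\|v_\beta\|_{L^q}^{q}$ for a suitable $q$ and interpolating between the $L^{2a/(p+b)}$-norm of $v_\beta$ (controlled by the assumed $L^a$-bound \eqref{condi-u-1}) and the $L^{2^{*}}$-norm of $v_\beta$ (controlled by the dissipation through Sobolev embedding), one verifies that $r<1+b+2a/N$ is exactly the subcriticality threshold which allows Young's inequality to absorb the reaction term into the dissipation plus a lower-order polynomial in $\mathscr{L}_p[u]$. This yields a Gronwall inequality of the form $\frac{d}{dt}\mathscr{L}_p[u] \leq C(p,T,\mathscr{F}(T))\bra{1+\mathscr{L}_p[u]}$, and a Moser-type iteration in $p\to\infty$ transfers the resulting uniform $L^p$-bounds into the $L^{\infty}$-bound needed for global existence. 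When $\sup_T \mathscr{F}(T)<\infty$, the absorbing constants become time-independent, so a standard time-slicing argument delivers \eqref{Linftybound}. A final passage to the limit in the regularisation produces a global weak solution of \eqref{system}.

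The main obstacle is the algebraic tuning of the weights $\theta_i$ in the second step: one has to simultaneously absorb the cross-products generated by the common prefactor $\Phi(u)$, which couples all species in the diffusion, and the off-diagonal reaction contributions from \eqref{A4}, while preserving the equivalence $\mathscr{L}_p[u]^{1/p} \simeq \|u\|_{\LO{p}}$. Once this selection is fixed --- leveraging the lower-triangular structure of $A$ with strictly positive diagonal --- the remaining work is the interpolation described above, in which \eqref{condi-growth1} enters in a sharp way through the exponent arithmetic.
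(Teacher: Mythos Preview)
Your overall strategy is the paper's: run the $\mathscr{L}_p$-energy argument of Lemma~\ref{lem-L-p-bound} up to the differential inequality \eqref{p}, then close it by the Gagliardo--Nirenberg interpolation of Lemma~\ref{lem-func-ineq} using the assumed $L^a$-bound \eqref{condi-u-1} in place of the $L^1$-bound from \eqref{A3}; this is exactly where the threshold relaxes from $1+b+2/N$ to $1+b+2a/N$. The passage to $L^\infty$ and the uniform-in-time part then follow Lemma~\ref{lem-L-infty-bound} and the truncation argument of Section~\ref{sub-uni} verbatim.

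One concrete error in your sketch: the weights $\theta_i$ must be chosen \emph{large}, not small. For the diffusion part, the cross-terms $(D_k+D_l)/2$ generated by mixed monomials are dominated by the diagonal blocks $\theta_k^2 D_k$ only when $\theta_k$ is sufficiently large (see the matrix $\widetilde{B}(\beta)$ in the proof of Lemma~\ref{lem-L-p-bound}). For the reaction part, Lemma~\ref{lem-f-sum} requires $\theta_i\geq g_i(\theta_{i+1},\ldots,\theta_m)$ inductively from $i=m-1$ down to $1$, so the early indices must be large relative to the later ones; making them small would reverse the absorption and the intermediate-sum bound would fail. Apart from this sign of the inequality on the $\theta_i$, your outline matches the paper's proof.
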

\begin{remark}
  When $b=0$,  our results cover the result in \cite[Theorem 1.3]{fitzgibbon2021reaction}.
\end{remark}

\begin{theorem}\label{th3}
  Assume \eqref{D:cond-1}, \eqref{D:cond-2},  \eqref{Q1}, \eqref{Q2}, \eqref{Q3}, \eqref{A1}, \eqref{A2}, \eqref{A4} and \eqref{A5}.
  Suppose that there exists a constant $q> 1$ such that
  \begin{equation}\label{condi-u-2}
    \|u_i\|_{L^{q}(0,T;\LO{q})}\leq \mathscr{F}(T), \quad {\forall i=1,\cdots,m}
  \end{equation}
  and
  \begin{equation}\label{condi-growth2}
   1\leq r< 1 + \frac{N}{N+2}b + \frac{2q}{N+2}.
  \end{equation}
  Then for any nonnegative, bounded initial datum $u_0\in (L^{\infty}(\Omega))^m$, there exists a global weak solution to \eqref{system}
  with $u_i\in L^{\infty}_{loc}(0,\infty; L^{\infty}(\Omega))$ for all $i=1,\cdots,m.$  In particular, if $\sup_{T\geq0}\mathscr{F}(T)<\infty$, then the solution is bounded uniformly in time, i.e.
  \begin{equation}
    \sup_{t\geq0}\|u_i(t)\|_{\LO{\infty}}<+\infty, \quad \forall i=1,\cdots,m.
  \end{equation}
\end{theorem}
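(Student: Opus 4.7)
The plan is to follow the $L^p$-energy template used for Theorems \ref{th1} and \ref{th2}, the only essential novelty being the interpolation device that replaces the $L^\infty_tL^a_x$ control by the space-time $L^q$ control \eqref{condi-u-2}. As a preliminary step I would regularize the diffusion via $\Phi(u)\rightsquigarrow\Phi(u)+\eps$ and truncate the nonlinearities so that classical solutions $u^\eps$ exist on $[0,T]$ and the a priori bound \eqref{condi-u-2} propagates uniformly in $\eps$ to the approximate problems. All subsequent estimates are performed on $u^\eps$ and afterwards passed to the limit $\eps\to 0$ by compactness.

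For an integer $p\ge2$ (eventually sent to infinity) I would differentiate the generalised $L^p$-energy $\mathscr{L}_p[u](t)=\intO \mathscr{H}_p[u]\,dx$. With the same choice of weights $\theta_1,\ldots,\theta_m$ allowed by the lower-triangular structure \eqref{A4} (exactly as in Theorems \ref{th1}--\ref{th2}), the cross terms produced by the reaction part rearrange to yield
\begin{equation*}
    \frac{d}{dt}\mathscr{L}_p[u] + c_p\lambda\sumi\intO \Phi(u)\,u_i^{p-2}|\nabla u_i|^2\,dx \;\le\; C_p\intO\Bigl(1+\sumi u_i^{p+r-1}\Bigr)\,dx,
\end{equation*}
for constants $c_p,C_p>0$. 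Invoking \eqref{Q2}, each dissipative integrand is bounded below by $\tfrac{4M}{(p+b)^2}|\nabla u_i^{(p+b)/2}|^2$, and integration in $t$ produces
\begin{equation*}
    \sup_{0\le t\le T}\sumi\|u_i(t)\|_{\LO{p}}^{p} + \sumi\int_0^T\!\!\intO \bigl|\nabla u_i^{(p+b)/2}\bigr|^2\,dx\,dt \;\le\; C_{p,T} + C_{p,T}\sumi\int_0^T\!\!\intO u_i^{p+r-1}\,dx\,dt.
\end{equation*}

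The crux of the proof, and the only place where \eqref{condi-u-2} enters, is to absorb the right-hand integral into the dissipation on the left. Setting $v_i:=u_i^{(p+b)/2}$, one has $\nabla v_i\in L^2((0,T)\times\Omega)$ by the energy inequality and $v_i\in L^{2q/(p+b)}((0,T)\times\Omega)$ by \eqref{condi-u-2}. A parabolic Gagliardo--Nirenberg interpolation — spatial Sobolev embedding applied pointwise in time and integrated in $t$ — then controls $\|v_i\|_{L^{s}((0,T)\times\Omega)}$ with $s=2(p+r-1)/(p+b)$ by
\begin{equation*}
    \|v_i\|_{L^{s}((0,T)\times\Omega)}^{s} \;\le\; C\bigl(\|\nabla v_i\|_{L^2((0,T)\times\Omega)}^{2}+\|v_i\|_{L^{2q/(p+b)}((0,T)\times\Omega)}^{2q/(p+b)}\bigr)^{\mu},
\end{equation*}
and an elementary exponent count shows that the growth condition \eqref{condi-growth2} is \emph{exactly} what is needed to force $\mu<1$ uniformly in $p\ge2$. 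Young's inequality then absorbs the critical term into the dissipation and leaves a Grönwall bound on $\mathscr{L}_p[u]$ depending only on $\mathscr{F}(T)$, $\|u_0\|_{\LO{p}}$ and $p$. A Moser/Alikakos iteration in $p$ produces the $L^\infty$ bound, and standard compactness passes it to the $\eps\to 0$ limit to give the global weak solution. The uniform-in-time bound when $\sup_T\mathscr{F}(T)<\infty$ follows because every constant above can then be chosen independently of $T$.

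The main obstacle I anticipate is the parabolic Gagliardo--Nirenberg step: one must verify that the exponent $\mu$ is strictly smaller than $1$ uniformly in $p$, and the accounting of how $p$, $q$, $b$, $r$ and $N$ interact is delicate. It is precisely here that the factor $\tfrac{N}{N+2}$ in front of $b$ (smaller than the $1$ of Theorem \ref{th2}) and the factor $\tfrac{2}{N+2}$ in front of $q$ (smaller than $\tfrac{2}{N}$) appear, reflecting the fact that the space-time $L^q$ hypothesis is genuinely weaker than the $L^\infty_tL^q_x$ hypothesis of Theorem \ref{th2}. Everything else — construction of the energy, the triangular-weight rearrangement, the bootstrap, and the approximation scheme — is a direct adaptation of the arguments already established in the proofs of Theorems \ref{th1} and \ref{th2}.
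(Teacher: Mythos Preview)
Your outline follows the paper's $L^p$-energy template correctly up to and including the inequality
\[
\sup_{0\le t\le T}\sumi\|u_i(t)\|_{\LO{p}}^{p} + \sumi\int_0^T\!\!\intO \bigl|\nabla u_i^{(p+b)/2}\bigr|^2\,dx\,dt \;\le\; C_{p,T} + C_{p,T}\sumi\int_0^T\!\!\intO u_i^{p+r-1}\,dx\,dt,
\]
but the absorption step you describe has a genuine gap. You propose to control $\int_{Q_T} u_i^{p+r-1}$ by interpolating $v_i=u_i^{(p+b)/2}$ between just two quantities: $\|\nabla v_i\|_{L^2(Q_T)}$ from the dissipation and $\|v_i\|_{L^{2q/(p+b)}(Q_T)}$ from the hypothesis \eqref{condi-u-2}. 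This two-term interpolation does not close. If one applies spatial Gagliardo--Nirenberg pointwise in $t$ and then Young's inequality to isolate $\|\nabla v_i\|_{L^2}^2$, the residual factor is $\int_0^T \|v_i(t)\|_{L^{s_0}(\Omega)}^{P}\,dt$ with $s_0=2q/(p+b)$ and an exponent $P$ that one checks is \emph{always} strictly larger than $s_0$; the $L^q(Q_T)$ hypothesis only controls $\int_0^T\|v_i\|_{L^{s_0}}^{s_0}\,dt$, so the loop does not close. Equivalently, your claimed inequality $\|v_i\|_{L^s(Q_T)}^s\le C(\|\nabla v_i\|_{L^2(Q_T)}^2+\|v_i\|_{L^{2q/(p+b)}(Q_T)}^{2q/(p+b)})^\mu$ with $\mu<1$ fails by scaling once $s_1=2(p+r-1)/(p+b)$ approaches $2$. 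Notice also that if your argument did work it would yield $r<1+b+2q/N$, which is the condition of Theorem~\ref{th2} for $L^\infty_tL^q_x$ data; the weaker $L^q(Q_T)$ hypothesis cannot give that much.

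The missing ingredient is the $L^\infty_tL^p_x$ control that you already have on the left-hand side of your energy inequality but do not use in the interpolation. The paper exploits it as follows: set $y_i=u_i^{(b+p)/2}$, choose an auxiliary exponent $\vartheta>p-1+r$, and first interpolate in $Q_T$ between $L^q$ and $L^\vartheta$,
\[
\int_{Q_T} u_i^{p-1+r}\,dx\,dt \le \mathscr F(T)^{\theta(p-1+r)}\Bigl(\int_{Q_T} y_i^{2\vartheta/(b+p)}\Bigr)^{(p-1+r-q)/(\vartheta-q)}.
\]
Then estimate the $L^{2\vartheta/(b+p)}$ norm of $y_i$ by spatial Gagliardo--Nirenberg between $H^1(\Omega)$ and $L^{2p/(b+p)}(\Omega)$ (the low index coming from $\mathscr{L}_p$, \emph{not} from $q$), and integrate in time using the $L^\infty_t$ bound on $\|y_i\|_{L^{2p/(b+p)}}$. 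A Young inequality of the form $x^{\lambda_1}y^{\lambda_2}\le\eps(x^{2p/(b+p)}+y^2)+C_\eps$ then absorbs both pieces into the left-hand side precisely when $\vartheta$ can be chosen so that $\vartheta>\frac{Nb(p-1+r)+2pq}{Nb+2(1+q-r)}$ and $\vartheta\le b+p+2p/N$; the compatibility of these two constraints is exactly \eqref{condi-growth2}. In short: keep your energy inequality as is, but in the interpolation use the triple $\bigl(L^\infty_tL^{2p/(b+p)}_x,\ L^2_tH^1_x,\ L^q(Q_T)\bigr)$ via an intermediate exponent $\vartheta$, rather than the pair $\bigl(L^2_tH^1_x,\ L^q(Q_T)\bigr)$.
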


\begin{remark}
  When $b=0$,  our results cover the result in \cite[Theorem 1.3]{fitzgibbon2021reaction}.
\end{remark}

\medskip
It is worth noting that other conditions can also lead to \red{a prior estimates}. For example, the entropy condition has been considered in many papers, especially when it involves chemical reactions, see e.g. \cite{caputo2009global,caputo2019solutions,souplet2018global,tang2018global}. This condition means that there exist the scalars $\mu_i\in\R$ such that

\begin{equation*}
  \sumi f_i(x,t,u)(\log u_i+\mu_i)\leq 0, \quad \forall (x,t,u)\in \Omega\times\R_+\times \R^m_+.
\end{equation*}
This condition guarantees an $\LO1$-estimate for $u_i\log u_i$.
Actually, this condition guarantees an $\LO1$-estimate for $H(u(\cdot,t))$, where
\begin{equation*}
  H(u)=\sumi u_i(\log u_i-1+\mu_i), \quad \forall u_i\geq 0.
\end{equation*}

Moreover, we can assume there exists a set $M=\prod_{k=1}^m(\alpha_i,\beta_i)$, where $\alpha_i,\beta_i$ are extended real numbers such that $\alpha_i<\beta_i$ for each $i=1,\ldots, m$, and solutions to (\ref{system}) remain in $M$ if initial data lies in $M$, a function $H:M\to \mathbb{R}_+$ that is $C^2$ and has the form
\[
H(u)=\sumi h_i(u_i),
\]
where $h_i:(\alpha_i,\beta_i)\to\mathbb{R}_+$ satisfies
\begin{equation}\label{H1}\tag{H1}
\begin{gathered}
h_i''(z) \ge 0, \quad \forall z\in (\alpha_i,\beta_i);\\
h_i(z) \text{ is bounded implies } z \text{ is bounded};\\
\nabla H(u)\cdot F(x,t,u)\le K_5\sumi h_i(u_i)+K_6, \quad \forall (x,t,u) \in\Omega\times\mathbb{R}_+\times\mathbb{R}_+^m
\end{gathered}
\end{equation}
for some $K_5, K_6>0$.
Then, we would obtain an $L^1(\Omega)$-estimate for $H(u(\cdot,t))$ (this has been introduced in \cite{morgan1989global}). In addition, intermediate sum conditions could also be written in the form
\begin{equation}\label{H2}\tag{H2}
A\begin{pmatrix}h_1'(u_1)F_1(x,t,u)\\\vdots\\h_m'(u_1)F_m(x,t,u)\end{pmatrix}\le K_7\vec{1}\left(\sumi h_i(u_i)+1\right)^r, \quad \forall(x,t,u)\in\Omega\times\mathbb{R}_+\times\mathbb{R}_+^m,
\end{equation}
that would lead to results in the same manner as we obtain from (\ref{A3}) and (\ref{A4}) above.

\begin{theorem}\label{th4}
  Assume \eqref{D:cond-1}, \eqref{D:cond-2},  \eqref{Q1}, \eqref{Q2}, \eqref{Q3}, \eqref{A1}, \eqref{A2}, \eqref{A5}, \eqref{H1} and \eqref{H2}. Assume moreover that
  \begin{equation}\label{growth-condition-1}
  1\leq r<1+b+\frac{2}{N}.
  \end{equation}
  Then for any nonnegative, bounded initial datum $u_0\in (L^{\infty}(\Omega))^m$, there exists a global weak solution to \eqref{system}
  with $u_i\in L^{\infty}_{loc}(0,\infty; L^{\infty}(\Omega))$ for all $i=1,\cdots,m.$  In particular, if $K_5<0$ or $K_5=K_6=0$, then the solution is bounded uniformly in time, i.e.
  \begin{equation}
    \sup_{t\geq0}\|u_i(t)\|_{\LO{\infty}}<+\infty, \quad \forall i=1,\cdots,m.
  \end{equation}
\end{theorem}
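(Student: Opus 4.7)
The plan is to adapt the proof of Theorem~\ref{th1}, replacing the linear mass functional $\sum c_i u_i$ by the entropy $H(u)$ and the $L^p$-energy built on $u_i$ by one built on $h_i(u_i)$. The hypotheses \eqref{H1} and \eqref{H2} are designed to play precisely the roles of \eqref{A3} and \eqref{A4} in this modified framework, and the convexity $h_i''\ge 0$ is the only new structural property that must be exploited.

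First I derive the $L^1(\Omega)$ bound on $H(u(\cdot,t))$. Multiplying the $i$-th equation of \eqref{system} by $h_i'(u_i)$, summing over $i$, integrating by parts and using the Neumann boundary condition yields
\[
\frac{d}{dt}\int_\Omega H(u)\,dx + \sum_{i=1}^m \int_\Omega h_i''(u_i)\,\Phi(u)\,(\nabla u_i)^{\!\top} D_i(x,t)\,\nabla u_i\,dx = \int_\Omega \nabla H(u)\cdot F(x,t,u)\,dx.
\]
The dissipation on the left is nonnegative by \eqref{H1}, \eqref{Q1} and \eqref{D:cond-1}, while the right-hand side is bounded by $K_5\int_\Omega H(u)\,dx+K_6|\Omega|$ via \eqref{H1}. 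A Gr\"onwall argument then gives $\int_\Omega H(u(\cdot,t))\,dx\le C(T)$ on every finite interval, and a bound uniform in $t$ when $K_5<0$ or $K_5=K_6=0$. The coercivity clause of \eqref{H1} translates this into the a priori control that plays the role of the $L^1$-bound on $\sum u_i$ in Theorem~\ref{th1}.

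Next I introduce the modified $L^p$-energy
\[
\mathscr{L}_p[u](t) = \int_\Omega \sum_{|\beta|=p}\binom{p}{\beta}\theta^{\beta^2}\prod_{i=1}^m h_i(u_i)^{\beta_i}\,dx.
\]
Differentiating in time and integrating by parts, the diffusion of $u_i$ produces (a) a nonnegative pointwise dissipation proportional to $h_i''(u_i)\Phi(u)|\nabla u_i|^2$ weighted by $\prod_{j\ne i}h_j(u_j)^{\beta_j}$, and (b) mixed cross-terms of the form $h_i'(u_i)h_j'(u_j)(\nabla u_j)^{\!\top}D_i\nabla u_i$ arising from differentiating $\prod_{j\ne i} h_j(u_j)^{\beta_j}$ in space. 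Mirroring the proof of Theorem~\ref{th1}, the weights $\theta_i$ can be tuned so that, upon summation over $\beta$, the cross-terms assemble into a nonpositive quadratic form, leaving only the dissipation; using \eqref{Q2} the surviving dissipation controls an $H^1$-type norm of the appropriate powers of $h(u)$. The reaction contribution, after applying the triangular matrix $A$ row by row via \eqref{H2}, is bounded by $K_7(H(u)+1)^r$ multiplied by a lower-order polynomial in $h(u)$, which Gagliardo--Nirenberg interpolation, anchored by the $L^1$-bound of Step~1, controls by a fraction of the dissipation plus a power of $\mathscr{L}_p$.

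The growth hypothesis $1\le r<1+b+2/N$ is exactly what is needed for Gagliardo--Nirenberg to close the resulting differential inequality for $\mathscr{L}_p$ with constants that remain stable as $p\to\infty$; a Moser-type iteration then delivers the $L^\infty$-bound on each $u_i$, which becomes uniform in time whenever the $L^1$-bound from Step~1 is, i.e.\ when $K_5<0$ or $K_5=K_6=0$. The main obstacle I anticipate is the algebraic verification in Step~2 that the mixed gradient cross-terms assemble into a nonpositive form using only the convexity $h_j''\ge 0$: unlike in Theorem~\ref{th1}, one must handle the ratios $h_j'(u_j)/h_j(u_j)$ in place of $1/u_j$, and these depend nonlinearly on $u_j$, requiring an additional layer of care in the choice of the $\theta_i$. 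Once this algebraic step is carried out, every other piece follows the template of Theorem~\ref{th1} verbatim, with $h_i(u_i)$ replacing $u_i$, $H(u)$ replacing $\sum c_i u_i$, and \eqref{H2} replacing \eqref{A4}.
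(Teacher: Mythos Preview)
Your approach is correct in outline, but the paper takes a shorter path that entirely sidesteps the algebraic obstacle you anticipate. Rather than rebuilding the $L^p$-energy machinery with $h_i(u_i)$ in place of $u_i$, the paper first sets $v_i := h_i(u_i)\ge 0$ and uses the convexity $h_i''\ge 0$ in a single line: expanding $\nabla\cdot(D_i\Phi(u)\nabla v_i)$ via the chain rule gives $h_i'(u_i)\nabla\cdot(D_i\Phi(u)\nabla u_i) + h_i''(u_i)\Phi(u)(\nabla u_i)^\top D_i\nabla u_i$, and the second summand is nonnegative by \eqref{D:cond-1}. This yields the parabolic \emph{inequality}
\[
\partial_t v_i - \nabla\cdot\bigl(D_i\Phi(u)\nabla v_i\bigr) \le h_i'(u_i)f_i(x,t,u) =: G_i(x,t,u),
\]
and hypotheses \eqref{H1} and \eqref{H2} say precisely that the $G_i$ satisfy the analogues of \eqref{A3} and \eqref{A4} in the variables $v_i$. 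One then reapplies the proof of Theorem~\ref{th1} verbatim to the $v_i$-system (the $L^p$-energy argument only uses upper bounds on the reaction terms, so the inequality sign is harmless), concludes $v_i\in L^\infty_{\mathrm{loc}}(0,\infty;L^\infty(\Omega))$, and the coercivity clause of \eqref{H1} translates this back to boundedness of $u_i$.

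Your concern about the cross-terms involving $h_j'(u_j)/h_j(u_j)$ is in fact illusory: once you write $h_k'(u_k)\nabla u_k = \nabla v_k$, your mixed terms become exactly $(D_k\nabla v_k)\cdot\nabla v_l$, the same structure as in Theorem~\ref{th1} with $v$ replacing $u$, and the choice of the $\theta_i$ needs no additional care. The paper's substitution makes this transparent and absorbs the extra $h_i''$-dissipation you identified in item~(a) into the inequality sign, rather than carrying it through the computation. So your route would work, but the paper's is a genuine shortcut that reduces Theorem~\ref{th4} to a corollary of Theorem~\ref{th1}.
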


\begin{remark}
  When $b=0$,  our results cover the result in \cite[Theorem 1.2]{fitzgibbon2021reaction}.
\end{remark}

\medskip
Our method is sufficiently robust to extend to other boundary conditions (for example, Robin-type boundary conditions). The precise results are given in the following theorem.
\begin{theorem}\label{th5}
	Consider the system
	\begin{equation}\label{Sys_other_bc}
	\begin{cases}
		\pa_t u_i - \nabla\cdot(D_i(x,t)\Phi(u)\nabla u_i) = f_i(x,t,u), &x\in\Omega, t>0,\\
		D_i(x,t)\Phi(u)\nabla u_i(x,t)\cdot \eta + \alpha_i u_i(x,t) = 0, &x\in\pa\Omega, t>0,\\
		u_{i}(x,0) = u_{i,0}(x), &x\in\Omega,
	\end{cases}
	\end{equation}
	where $\eta$ is the unit  outward normal vector on $\pa\Omega$, and $\alpha_i\geq 0$ for all $i=1,\ldots, m$.
	
	Assume \eqref{D:cond-1}, \eqref{D:cond-2}, \eqref{Q1}, \eqref{Q2}, \eqref{Q3}, \eqref{A1}, \eqref{A2}, \eqref{A4}, \eqref{A5}. Moreover, assume either \eqref{A3} or \eqref{condi-u-1} or \eqref{condi-u-2} with
	\begin{equation*}
		0\leq r< \begin{cases}
			1 + b + \frac{2}{N}, &\text{ in case of } \eqref{A3},\\
			1 + b  + \frac{2a}{N}, &\text{ in case of } \eqref{condi-u-1},\\
			1 + \frac{N}{N+2}b + \frac{2q}{N+2}, &\text{ in case of } \eqref{condi-u-2}.
		\end{cases}
	\end{equation*}
	Then for any non-negative, bounded initial data $u_0\in \LO{\infty}^m$, there exists a  global weak solution to \eqref{Sys_other_bc} (see Definition \ref{def_diff_bc}) with $u_i\in L^{\infty}_{loc}(0,\infty;\LO{\infty})$ for all $i=1,\cdots, m$.
In particular, if $K_1< 0$ or $K_1= K_2 = 0$ in case of \eqref{A3}, or $\sup_{T\geq0}\mathscr{F}(T)<\infty$ in case of \eqref{condi-u-1} or \eqref{condi-u-2}, then the solution is bounded uniformly in time, i.e.
	\begin{equation*}
	  \sup_{t\geq0}\|u_i(t)\|_{\LO{\infty}}<+\infty, \quad \forall i=1,\cdots,m.
	\end{equation*}
\end{theorem}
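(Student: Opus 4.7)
The plan is to carry out the same $L^p$-energy argument used for Theorems~\ref{th1}--\ref{th4}, with the only substantive change being the careful treatment of the boundary terms that now arise from integration by parts. The guiding observation is that since $\alpha_i\geq 0$ and $u_i\geq 0$, the Robin condition produces only non-positive surface contributions, which either cancel or help.

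First, I would construct a weak solution locally in time for \eqref{Sys_other_bc} in the sense of Definition~\ref{def_diff_bc}, using a standard fixed-point/approximation scheme (e.g.\ regularising $\Phi$ away from zero and passing to the limit). Non-negativity follows from \eqref{A2} via the usual truncation argument, adapted to the Robin boundary by noting that on $\{u_i=0\}\cap\partial\Omega$ the boundary term vanishes. Next, I would establish the relevant $L^1$ bound. Under \eqref{A3}, multiplying the $i$-th equation by $c_i$ and integrating yields
\begin{equation*}
\frac{d}{dt}\sumi c_i\intO u_i\,dx + \sumi c_i\alpha_i \int_{\pa\Omega} u_i\, dS \;\leq\; K_1\sumi \intO u_i\,dx + K_2|\Omega|,
\end{equation*}
so the Robin terms act dissipatively and the Neumann $L^1$-bound transfers verbatim; the uniform-in-time variant under $K_1<0$ or $K_1=K_2=0$ is the same. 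Under \eqref{condi-u-1} or \eqref{condi-u-2} the required integrability is assumed outright.

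The core step is the $L^p$-energy estimate with the same functional
\begin{equation*}
\mathscr{L}_p[u](t)=\intO \sum_{\beta\in\mathbb{Z}_+^m,\,|\beta|=p}\binom{p}{\beta}\theta^{\beta^2} u(t)^\beta\,dx.
\end{equation*}
Differentiating in time and using the equations, the diffusion part, after integration by parts, produces the usual non-positive quadratic gradient form, together with boundary contributions of the form
\begin{equation*}
-\sum_{|\beta|=p}\binom{p}{\beta}\theta^{\beta^2}\sumi \beta_i \alpha_i \int_{\pa\Omega} u^\beta\,dS \;\leq\; 0,
\end{equation*}
which can simply be dropped. Hence one recovers exactly the same gradient coercivity as in the Neumann case; in particular the lower bound $\Phi(u)\geq M u_i^b$ from \eqref{Q2} gives the same degenerate-parabolic dissipation. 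The reaction part is handled via \eqref{A4} in identical fashion, and the resulting differential inequality for $\mathscr{L}_p$ is structurally the same as in the proofs of Theorems~\ref{th1}--\ref{th3}.

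With this inequality in hand, a Gagliardo--Nirenberg interpolation against the $L^1$, $L^a$, or $L^q$ control established in step two furnishes an $L^p(\Omega)$-bound on $[0,T]$ under the respective growth conditions on $r$, and a Moser-type bootstrap on $p$ delivers the $L^\infty_{loc}(0,\infty;\LO{\infty})$ conclusion; the uniform-in-time bound follows under the stated hypotheses as before. The main obstacle is really just bookkeeping: ensuring that Gagliardo--Nirenberg applies with the same exponents despite the Robin trace (it does, since only interior Sobolev embeddings are used), and verifying the local existence theory accommodates the inhomogeneous boundary condition. The favourable sign of all surface integrals is what makes the argument go through without any new smallness or structural restriction on the $\alpha_i$.
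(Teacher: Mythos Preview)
Your proposal is correct and follows essentially the same approach as the paper: the paper's proof of Theorem~\ref{th5} simply states that the argument of Theorems~\ref{th1}--\ref{th3} carries over, singling out only the $L^1$-estimate (where testing with $\varphi\equiv 1$ produces the extra non-negative boundary term $\sumi c_i\alpha_i\int_0^T\int_{\pa\Omega}u_i$, which is then dropped). Your treatment is in fact more explicit than the paper's, since you also spell out why the boundary contributions in the $L^p$-energy computation have the favourable sign and can be discarded.
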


\begin{remark}
	We believe Theorem \ref{th5} can also be extended to the semilinear boundary conditions of the form
	\begin{equation*}
		D_i(x,t)\nabla u_i(x,t)\cdot \eta + \alpha_i u_i(x,t) = G_i(u), \quad x\in\pa\Omega,
	\end{equation*}
	where the nonlinearities $G_i$ also satisfy a quasi-positivity condition and an intermediate sum condition. The details are left for the interested reader. We refer to \cite{morgan2021global,sharma2021global} for a related work dealing with constant diffusion coefficients.
\end{remark}

\medskip
\noindent{\bf The paper is organised as follows.}
In Section \ref{sub-existence}, we prove the existence of Theorem \ref{th1} by first considering an approximate system where we regularize the nonlinearities to obtain global approximate weak solutions. Then we derive uniform estimates, applying the key idea of the $L^p$-energy functions, and pass to the limit to obtain global existence of \eqref{system}. The uniform-in-time boundedness is proved in Section \ref{sub-uni}. The proofs of extended \red{Theorems} \ref{th2}, \ref{th3}, \ref{th4}, and \ref{th5} are given in Section \ref{sub-proof}. In Section \ref{sec-appli}, we show application of our results to a \red{Susceptible-Exposed-Infected-Recovered (SEIRD) model and its variants. Finally, \red{in Appendix \ref{appendix}, we give } the proof of Lemma \ref{lem-L-infty-bound}.}

\medskip
\noindent\textbf{Notation.} In this paper we use the following notation, some of which will be recalled from time to time:
\begin{itemize}
	\item For $T>0$ and $p\in [1,\infty]$, $Q_T:= \Omega\times(0,T)$ and
	\begin{equation*}
		L^p(Q_T):= L^p(0,T;L^p(\Omega))
	\end{equation*}
	equipped with the usual norm
	\begin{equation*}
		\|f\|_{\LQ{p}}:= \bra{\int_0^T\intO |f|^pdxdt}^{1/p}
	\end{equation*}
	for $1\leq p < \infty$ and
	\begin{equation*}
		\|f\|_{\LQ{\infty}}:= \underset{(x,t)\in Q_T}{\text{\normalfont ess sup}}|f(x,t)|.
	\end{equation*}
	\item For  $p\in [1,\infty]$, $\tau \geq 0$ and $\delta>0$, we denote by
	\begin{equation*}
		Q_{\tau,\tau+\delta}:= \Omega\times(\tau,\tau+\delta)
	\end{equation*}
	and
	\begin{equation*}
		L^p(Q_{\tau,\tau+\delta}):= L^p(\tau,\tau+\delta; L^p(\Omega)).
	\end{equation*}
\end{itemize}

\section{Global existence of bounded weak solutions}\label{sub-existence}
\begin{definition}[Weak solutions]\label{def}
	A vector of nonnegative state variables $u=(u_1,\cdots,u_m)$ is called a weak solution to \eqref{system} on $(0,T)$ if
	$$\pa_tu_i\in L^2(0,T;(H^1(\Omega))'), \quad \Phi(u)\nabla u_i\in L^2(0,T; L^2(\Omega)) $$
	and
	$$\nabla (u_i^{\frac{b+2}{2}})\in L^2(0,T; L^2(\Omega)) $$
	with $u_i(\cdot, 0)=u_{i,0}(\cdot)$ for all $i=1,\cdots,m$, and  for any test function $ \red{\psi} \in L^2(0,T; H^1(\Omega))$
	we have
	\begin{equation*}
		\begin{split}
			\int^T_0 \intO \pa_t u_i  \psi\,dxdt+\int^T_0 \intO D_i(x,t)\Phi(u)\nabla u_i \cdot \nabla \psi \,dxdt
			=\int^T_0 \intO f_i(u)\psi \,dxdt.
		\end{split}
	\end{equation*}
\end{definition}

\subsection{Approximate system}\label{subsec-appro}

For $i=1,\cdots,m$ and $\eps>0$, consider the following approximating system for $u^{\eps}=(u_1^{\eps}, \cdots, u_m^{\eps})$,
\begin{equation}\label{appro-system}
\begin{cases}
   \pa_t u_i^{\eps}-\nabla\cdot(D_i(x,t)\Phi(u^{\eps})\nabla u_i^{\eps})=f^{\eps}_i(u^{\eps}),  &x\in \Omega, t>0,\\
  (D_i(x,t) \Phi(u^{\eps}) \nabla u^{\eps}_i)\cdot \eta=0,   &x\in \pa \Omega, t>0,\\
  u^{\eps}_i(x,0)=u_{i,0}(x)+\eps,   &x\in \Omega,
\end{cases}
\end{equation}
where
\begin{equation*}
f^{\eps}_i(u^{\eps}):=\frac{f_i(u^{\eps})}{1+\eps \sum^m_{j=1}|f_j(u^{\eps})|}
\end{equation*}
and $u_{i,0}^{\eps}\in \LO{\infty}$ is non-negative and $u_{i,0}^{\eps}\stackrel{\eps\rightarrow0}{\longrightarrow}u_{i,0}$ in $\LO{\infty}$. With this approximation, it is easy to check that the approximated non-linearities $f_i^{\eps}$ still satisfy the assumptions \eqref{A1}-\eqref{A5}.

\begin{lemma}
  For any fixed $\eps>0$, there exists a global bounded, nonnegative weak solution to \eqref{appro-system} on any finite time interval $(0,T)$, $T>0$.
\end{lemma}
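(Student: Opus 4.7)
The plan is to combine a further non-degenerate approximation of the diffusion with a Schauder fixed-point argument, and then pass to the limit to recover a weak solution of \eqref{appro-system}. For $\delta>0$, set $\Phi_\delta(u) := \Phi(u) + \delta$, so that the $\delta$-regularized system (with $\Phi$ replaced by $\Phi_\delta$) has uniformly elliptic diffusion with ellipticity constant at least $\lambda\delta$. Observe that the approximated nonlinearities satisfy $|f_i^\eps(u)|\le 1/\eps$ and inherit the quasi-positivity \eqref{A2} from $f_i$.

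Fix $T>0$ and set $M_T := \max_{i}\|u_{i,0}\|_{\LO{\infty}} + \eps + T/\eps$. First I would work on the closed convex set
\[
\mathcal{K} := \bigl\{ v \in L^2(Q_T)^m : 0 \le v_i(x,t) \le M_T \text{ a.e.}\bigr\},
\]
and define $\mathcal{T}: \mathcal{K} \to L^2(Q_T)^m$ by $u = \mathcal{T}(v)$ being the weak solution of the linear decoupled problem
\[
\pa_t u_i - \nabla\cdot\bra{D_i(x,t)\Phi_\delta(v)\nabla u_i} = f_i^\eps(v), \qquad u_i(\cdot,0) = u_{i,0}+\eps,
\]
with no-flux boundary conditions. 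Since $\Phi_\delta(v)$ lies in $[\delta, C(M_T)]$, standard linear parabolic theory provides a unique $u_i\in L^2(0,T;H^1(\Omega))\cap C([0,T];L^2(\Omega))$ with $\pa_t u_i\in L^2(0,T;H^1(\Omega)')$.

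Next I would verify $\mathcal{T}(\mathcal{K})\subset\mathcal{K}$. Nonnegativity of $u_i$ follows by testing with $-(u_i)_-$ and using quasi-positivity of $f_i^\eps$ together with nonnegative initial data, while the upper bound $u_i\le M_T$ follows from $|f_i^\eps|\le 1/\eps$ via comparison with the spatially constant supersolution $\Psi(t):=\|u_{i,0}\|_{\LO{\infty}}+\eps+t/\eps$. The Aubin--Lions lemma gives that $\mathcal{T}(\mathcal{K})$ is relatively compact in $L^2(Q_T)^m$, and continuity of $\mathcal{T}$ in the $L^2$-topology follows from standard linear energy estimates together with continuity \eqref{Q1} of $\Phi$ and dominated convergence (using the uniform $L^\infty$-bound on $\mathcal{K}$). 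Schauder's fixed-point theorem then produces $u^{\eps,\delta}\in\mathcal{K}$, a global bounded nonnegative weak solution of the $\delta$-regularized system on $(0,T)$.

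Finally I would pass to the limit $\delta\to 0$. Testing the $\delta$-equation with $u_i^{\eps,\delta}$ and using \eqref{Q2} yields, uniformly in $\delta$,
\[
\int_0^T\!\!\int_\Omega \bra{\Phi(u^{\eps,\delta})+\delta}\,|\nabla u_i^{\eps,\delta}|^2\,dx\,dt + \|\nabla (u_i^{\eps,\delta})^{(b+2)/2}\|_{L^2(Q_T)}^2 \le C,
\]
and, from the equation, a uniform bound on $\pa_t u_i^{\eps,\delta}$ in $L^2(0,T;H^1(\Omega)')$. Aubin--Lions then yields, along a subsequence, $u^{\eps,\delta}\to u^\eps$ strongly in $L^2(Q_T)^m$ and a.e., while the fluxes $D_i\Phi_\delta(u^{\eps,\delta})\nabla u_i^{\eps,\delta}$ converge weakly in $L^2(Q_T)$; the a.e.\ convergence and continuity of $\Phi$, together with the uniform $L^\infty$ bound, allow us to identify the weak limit as $D_i\Phi(u^\eps)\nabla u_i^\eps$, so one can pass to the limit in the weak formulation. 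The hard part will be precisely this identification of the nonlinear flux in the degenerate limit; it is tractable here because $u^{\eps,\delta}$ and $f_i^\eps$ are uniformly bounded (by $M_T$ and $1/\eps$ respectively), but it is exactly the step that makes the quasilinear case genuinely harder than its semilinear counterpart.
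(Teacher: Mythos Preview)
Your strategy---an extra non-degenerate regularization $\Phi_\delta=\Phi+\delta$, a Schauder fixed point for the resulting uniformly parabolic problem, then $\delta\to 0$---is sound and considerably more explicit than the paper's proof, which simply asserts that global existence of a weak solution to \eqref{appro-system} is ``standard (using Galerkin approach)'' and then proves nonnegativity by passing to the auxiliary system with right-hand side $f_i^\eps(u_+^\eps)$ and testing against $u_{i,-}^\eps$.

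There is, however, a genuine gap in your nonnegativity step for the map $\mathcal{T}$. In your decoupled linear problem the source is $f_i^\eps(v)$, a \emph{fixed} function of the frozen $v\in\mathcal{K}$; quasi-positivity only says $f_i^\eps(v)\ge 0$ when $v_i=0$, which has nothing to do with the set $\{u_i<0\}$. Testing with $-(u_i)_-$ therefore leaves the term $-\int f_i^\eps(v)(u_i)_-$ with uncontrolled sign, and $\mathcal{T}(\mathcal{K})\subset\mathcal{K}$ fails as written. The standard repair is to replace the right-hand side in the definition of $\mathcal{T}$ by $f_i^\eps\bigl(v_1,\ldots,v_{i-1},(u_i)_+,v_{i+1},\ldots,v_m\bigr)$; this is still a bounded, Lipschitz semilinear problem in $u_i$, and now on $\{u_i<0\}$ one has $(u_i)_+=0$ and $v_j\ge 0$, so quasi-positivity yields the desired sign. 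Alternatively, enlarge $\mathcal{K}$ to $\{|v_i|\le M_T\}$, extend $\Phi$ and $f^\eps$ by composition with the positive part, obtain a fixed point, and prove its nonnegativity a posteriori exactly as the paper does.

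With that fix your compactness, continuity, and $\delta\to 0$ arguments go through; the flux identification you correctly single out as the delicate point can be handled by writing $D_i\Phi_\delta(u^{\eps,\delta})\nabla u_i^{\eps,\delta}$ as the product of $\sqrt{\Phi_\delta(u^{\eps,\delta})}$ (strongly convergent in every $L^p$ by a.e.\ convergence plus the uniform $L^\infty$ bound) and $D_i\sqrt{\Phi_\delta(u^{\eps,\delta})}\,\nabla u_i^{\eps,\delta}$ (bounded in $L^2$), together with the weak $L^2$-convergence of $\nabla (u_i^{\eps,\delta})^{(b+2)/2}$ to identify the limit.
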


\begin{proof}
  Since for fixed $\eps>0$, the nonlinearities $f_i^{\eps}(u^{\eps})$ are Lipschitz continuous and bounded, i.e.,
  \begin{equation*}
f^{\eps}_i(u^{\eps}):=\frac{f_i(u^{\eps})}{1+\eps \sum^m_{j=1}|f_j(u^{\eps})|}\leq \frac{1}{\eps}, \quad \forall (x,t)\in Q_T.
\end{equation*}
The global existence of a weak solution of \eqref{appro-system} is standard (Using Galerkin approach).

Next we prove the nonnegativity of $u^{\eps}$. Denote $u_{i,+}^{\eps}:=\max\{u_i^{\eps},0\}$ and $u_{i,-}^{\eps}:=\min\{u_i^{\eps},0\}$. We consider the auxiliary system of \eqref{appro-system}
\begin{equation*}
  \pa_t u_i^{\eps}-\nabla\cdot(D_i(x,t)\Phi(u^{\eps})\nabla u_i^{\eps})=f^{\eps}_i(u_+^{\eps}),
\end{equation*}
where $u_+^{\eps}=(u_{i,+}^{\eps})_{i=1,\cdots,m}$.

By multiplying this auxiliary system by $u_{i,-}^{\eps}$ and using the quasi-positivity assumption \eqref{A2} (recall that this property also holds for $f_i^{\eps}$), we obtain
\begin{equation*}
\frac{1}{2}\intO |u_{i,-}^{\eps}|^2 \,dx+\lambda\intO \Phi(u^{\eps}) |\nabla u_{i,-}^{\eps}|^2\,dx \leq 0,
\end{equation*}
where we use $u_{i,0,-}^{\eps}=0$. Thus, we get $u_{i,-}^{\eps}=0$ a.e. in $Q_T$, this shows the desired nonnegativity.
\end{proof}

\subsection{Uniform-in-$\eps$ estimate}\label{subsec-uni-eps}
In this subsection, we prove crucial uniform-in-$\eps$ estimates for the solution to \eqref{appro-system}. Moreover, we want to emphasize that all the constants in this subsection are independent of $\eps$.

The following bound in $L^{\infty}(0,T; L^1(\Omega))$ is immediate.

\begin{lemma}\label{L1-bound}
Assume \eqref{A1}, \eqref{A2} and \eqref{A3}. Then  for any $T>0$, there exists a constant $M_T$ depending on $T, \Omega, \|u_{i,0}\|_{L^1(\Omega)}$ and $c_1,\cdots,c_m$, $K_1, K_2$ in \eqref{A3} such that
\begin{equation*}
  \sup_{t\in(0,T)}\|u_i^{\eps}(t)\|_{\LO1}\leq M_T, \quad \forall i=1,\cdots,m.
\end{equation*}
\end{lemma}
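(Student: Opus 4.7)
\textbf{Proof plan for Lemma \ref{L1-bound}.} The strategy is the standard mass control argument exploiting the no-flux boundary condition together with the dissipativity assumption \eqref{A3}. Since all calculations are carried out on the approximate system, and since the constants $c_i$ and $K_1, K_2$ from \eqref{A3} are inherited by $f_i^\eps$ (as noted in subsection \ref{subsec-appro}), one can work directly with the approximate solutions $u^\eps$.

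First I would multiply the $i$-th equation in \eqref{appro-system} by $c_i$, integrate over $\Omega$, and sum over $i=1,\ldots,m$. Using the homogeneous Neumann condition $(D_i\Phi(u^\eps)\nabla u_i^\eps)\cdot\eta=0$ on $\pa\Omega$, every diffusion term vanishes after integration by parts. Setting
\begin{equation*}
W^\eps(t):=\sumi c_i\intO u_i^\eps(x,t)\,dx,
\end{equation*}
the mass control assumption \eqref{A3} applied pointwise in $(x,t)$ then yields
\begin{equation*}
\frac{d}{dt}W^\eps(t)=\sumi c_i\intO f_i^\eps(u^\eps)\,dx\leq K_1\intO\sumi u_i^\eps\,dx+K_2|\Omega|.
\end{equation*}

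Next I would reduce the right-hand side to a linear function of $W^\eps$ itself. Setting $c_{\min}:=\min_{1\leq i\leq m}c_i>0$, one has $\int_\Omega\sum_i u_i^\eps\,dx\leq W^\eps(t)/c_{\min}$ by non-negativity of $u_i^\eps$. Distinguishing the sign of $K_1$: if $K_1\geq 0$ we obtain the linear differential inequality $\frac{d}{dt}W^\eps\leq (K_1/c_{\min})W^\eps+K_2|\Omega|$; if $K_1<0$ we discard the (non-positive) first term to get $\frac{d}{dt}W^\eps\leq K_2|\Omega|$. In both cases Gr\"onwall's lemma gives a bound
\begin{equation*}
W^\eps(t)\leq M_T',\qquad \forall t\in(0,T),
\end{equation*}
with $M_T'$ depending only on $T,|\Omega|,c_1,\ldots,c_m,K_1,K_2$, and on $W^\eps(0)=\sum_i c_i\|u_{i,0}^\eps\|_{L^1(\Omega)}$, which is itself controlled uniformly in $\eps$ by $\sum_i c_i(\|u_{i,0}\|_{L^1(\Omega)}+\eps|\Omega|)$.

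Finally, returning to individual components, $\|u_i^\eps(t)\|_{L^1(\Omega)}=\int_\Omega u_i^\eps\,dx\leq W^\eps(t)/c_{\min}\leq M_T'/c_{\min}=:M_T$, uniformly in $\eps>0$ and $t\in(0,T)$, which is the claimed estimate. There is no real obstacle here; the only point requiring a brief justification is that \eqref{A3} is preserved by the regularization $f_i^\eps=f_i/(1+\eps\sum_j|f_j|)$, but this is immediate since $1+\eps\sum_j|f_j|\geq 1$ so that $\sum_i c_i f_i^\eps\leq (\sum_i c_i f_i)_++K_2\leq K_1\sum_i u_i+K_2$ when $\sum_i c_i f_i\geq 0$, and the inequality is trivial otherwise whenever $K_1\sum_i u_i+K_2\geq 0$; for the remaining sign configuration one uses the elementary fact that $a/(1+\eps b)\leq a$ whenever $a\leq 0$ and $b\geq 0$, so $\sum_i c_i f_i^\eps\leq\sum_i c_i f_i\leq K_1\sum_i u_i+K_2$.
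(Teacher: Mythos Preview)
Your approach is essentially identical to the paper's: multiply by $c_i$, sum, integrate, use the no-flux condition and \eqref{A3}, then apply Gr\"onwall. The paper's proof is only two sentences and gives fewer details than you do.

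One small slip in your final paragraph: the ``elementary fact'' you invoke is backwards. If $a\leq 0$ and $b\geq 0$ then $a/(1+\eps b)\geq a$, not $\leq a$, since dividing a negative number by something $\geq 1$ moves it toward zero. So in the configuration $\sum_i c_i f_i<0$ and $K_1\sum_i u_i+K_2<0$, the inequality $\sum_i c_i f_i^\eps\leq K_1\sum_i u_i+K_2$ can genuinely fail pointwise. This does not damage the lemma, however: whenever $\sum_i c_i f_i<0$ one has $\sum_i c_i f_i^\eps<0$ as well, so pointwise $\sum_i c_i f_i^\eps\leq (K_1\sum_i u_i+K_2)_+\leq K_1^+\sum_i u_i+K_2^+$, and Gr\"onwall with $K_1^+,K_2^+$ in place of $K_1,K_2$ gives the same $M_T$-type bound (with the claimed dependencies). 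The paper itself simply asserts that \eqref{A3} is inherited by $f_i^\eps$ without writing out this case analysis.
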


\begin{proof}
  By summing the equation in \eqref{appro-system}, integrating on $\Omega$ and using \eqref{A3} we have
  \begin{equation*}
    \frac{d}{dt}\sumi\intO c_iu_i^{\eps} \,dx \leq \intO \left(K_1 \sumi u_i^{\eps}(x,t) + K_2\right) \,dx.
  \end{equation*}
  The classical Gronwall inequality gives the desired estimate.
\end{proof}

\medskip
The following  $L^p$-energy function has been developed in \cite{morgan2021global,fitzgibbon2021reaction}.
We write $\mathbb{Z}_{+}^m$ as the set of all $m$ tuples of nonnegative integers. Addition and scalar multiplication by nonnegative integers of elements in $\mathbb{Z}_{+}^m$ is understood in the usual manner. If $\beta=\left(\beta_1, \ldots, \beta_m\right) \in \mathbb{Z}_{+}^m$ and \red{$p \in \mathbb{Z}_+$}, then we define $\beta^p=\left(\left(\beta_1\right)^p, \ldots,\left(\beta_m\right)^p\right)$. Also, if $\alpha=\left(\alpha_1, \ldots, \alpha_m\right) \in$ $\mathbb{Z}_{+}^m$, then we define $|\alpha|=\sum_{i=1}^m \alpha_i$. Finally, if $z=\left(z_1, \ldots, z_m\right) \in \mathbb{R}_{+}^m$ and $\alpha=$ $\left(\alpha_1, \ldots, \alpha_m\right) \in \mathbb{Z}_{+}^m$, then we define $z^\alpha=z_1^{\alpha_1} \cdots z_m^{\alpha_m}$, where we interpret $0^0$ to be 1. For any $2\leq p \in \mathbb N$, we build our $L^p$-energy function of the form
\begin{equation}\label{def-Lp-energy-fun}
  \mathscr{L}_p[u](t)=\int_{\Omega} \mathscr{H}_p[u](t) \,d x,
\end{equation}
where
\begin{equation}\label{def-H-p}
\mathscr{H}_p[u](t)=\sum_{\beta \in \mathbb{Z}_{+}^m,|\beta|=p}\left(\begin{array}{l}
p \\
\beta
\end{array}\right) \theta^{\beta^2} u(t)^\beta
\end{equation}
with
\begin{equation*}
\left(\begin{array}{l}
p \\
\beta
\end{array}\right)=\frac{p !}{\beta_{1} ! \cdots \beta_{m} !}
\end{equation*}
and $\theta=\left(\theta_1, \ldots, \theta_m\right)$, where $\theta_1, \ldots, \theta_m$ are positive real numbers which will be determined later. For $p=0,1,2$, one can write these functions explicitly as
\begin{equation*}
\mathscr{H}_0[u](t)=1 \text { and } \mathscr{H}_1[u](t)=\sum_{j=1}^m \theta_j u_j(t)
\end{equation*}
and
\begin{equation*}
\mathscr{H}_2[u](t)=\sum_{i=1}^m \theta_i^4 u_i(t)^2+2 \sum_{i=1}^{m-1} \sum_{j=i+1}^m \theta_i \theta_j u_i(t) u_j(t) .
\end{equation*}
Thanks to the nonnegativity of the solution, we have
\begin{equation*}
\mathscr{L}_p[u](t) \sim \sum_{i=1}^m\|u_i(t)\|_{L^p(\Omega)}^p ~\text{ for } p\geq 1.
\end{equation*}
\red{This will allow us} to use $\mathscr{L}_p[u](t)$ to obtain a priori estimates on $u$ for each $2 \leq p \in \mathbb{N}$. We will need two technical lemmas.
\begin{lemma}[\cite{fitzgibbon2021reaction}, Lemma 4.1]\label{technoque-Lem-1}
  Suppose $m \in \mathbb{N}, \theta=\left(\theta_1, \ldots, \theta_m\right)$, where $\theta_1, \ldots, \theta_m$ are positive real numbers, $\beta \in \mathbb{Z}_{+}^m$, and $\mathscr{H}_p[u]$ is defined in \eqref{def-H-p}. Then
$$
\frac{\partial}{\partial t} \mathscr{H}_0[u](t)=0, \quad \frac{\partial}{\partial t} \mathscr{H}_1[u](t)=\sum_{j=1}^m \theta_j \frac{\partial}{\partial t} u_j(t)
$$
and for $p \in \mathbb{N}$ such that $p \geq 2$,
$$
\frac{\partial}{\partial t} \mathscr{H}_p[u](t)=\sum_{|\beta|=p-1}\left(\begin{array}{l}
p \\
\beta
\end{array}\right) \theta^{\beta^2} u(t)^\beta \sum_{j=1}^m \theta_j^{2 \beta_j+1} \frac{\partial}{\partial t} u_j(t).
$$
\end{lemma}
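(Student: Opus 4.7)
The proof is a direct computation from the definition of $\mathscr{H}_p[u]$. The cases $p=0$ and $p=1$ are immediate: for $p=0$ the only multi-index with $|\beta|=0$ is the zero vector and the summand is $1$, so $\mathscr{H}_0[u]\equiv 1$; for $p=1$ the multi-indices with $|\beta|=1$ are exactly the standard basis vectors $e_j$, giving $\binom{1}{e_j}\theta^{e_j^2}u^{e_j} = \theta_j u_j$ and hence $\mathscr{H}_1[u] = \sum_{j=1}^m \theta_j u_j$. In both cases the stated formula follows by termwise differentiation.

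For $p \ge 2$, the plan is to differentiate each monomial $\theta^{\beta^2} u^\beta$ via the product rule. Since only the factor $u^\beta = \prod_i u_i^{\beta_i}$ depends on $t$, one obtains
\[
\frac{\partial}{\partial t}\mathscr{H}_p[u]
\;=\; \sum_{|\beta|=p}\binom{p}{\beta}\theta^{\beta^2}\sum_{j:\,\beta_j\ge 1}\beta_j\, u^{\beta-e_j}\,\frac{\partial u_j}{\partial t}.
\]
The next step is to interchange the sums in $\beta$ and $j$ and to reindex via $\gamma := \beta - e_j$. For each fixed $j$, as $\beta$ ranges over multi-indices with $|\beta|=p$ and $\beta_j\ge 1$, the new index $\gamma$ ranges freely over all multi-indices with $|\gamma| = p-1$, so the constraint $\beta_j \ge 1$ disappears after the shift.

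The remaining work is purely combinatorial, and rests on two identities. First, using the convention $\binom{p}{\alpha} := p!/(\alpha_1!\cdots\alpha_m!)$ (valid for any $\alpha$ regardless of $|\alpha|$), one has
\[
\beta_j \binom{p}{\beta} \;=\; (\gamma_j + 1)\,\binom{p}{\gamma+e_j} \;=\; \binom{p}{\gamma},
\]
which follows from $(\gamma_j+1)! = (\gamma_j+1)\gamma_j!$. Second, since $(\gamma+e_j)^2$ differs from $\gamma^2$ only in the $j$-th coordinate, where the increment is $(\gamma_j+1)^2 - \gamma_j^2 = 2\gamma_j + 1$, one has $\theta^{(\gamma+e_j)^2} = \theta^{\gamma^2}\,\theta_j^{2\gamma_j+1}$. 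Substituting both identities and pulling the factors $\binom{p}{\gamma}\theta^{\gamma^2}u^\gamma$ (which do not depend on $j$) outside the inner sum yields exactly the claimed expression.

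There is no analytic obstacle here, as the statement is an algebraic identity; the main care lies in the bookkeeping of the multi-index shift $\gamma = \beta - e_j$ and in keeping track of the nonstandard convention used for the symbol $\binom{p}{\gamma}$ when $|\gamma|=p-1$.
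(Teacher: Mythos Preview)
Your proof is correct: the termwise differentiation, the reindexing $\gamma=\beta-e_j$, and the two identities $\beta_j\binom{p}{\beta}=\binom{p}{\gamma}$ and $\theta^{(\gamma+e_j)^2}=\theta^{\gamma^2}\theta_j^{2\gamma_j+1}$ are exactly what is needed, and you have handled the bookkeeping carefully. The paper does not supply its own proof of this lemma but simply cites \cite{fitzgibbon2021reaction}, so there is nothing further to compare; your argument is precisely the standard one.
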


\begin{lemma}[\cite{fitzgibbon2021reaction}, Lemma 4.2]\label{technoque-Lem-2}
Suppose $m \in \mathbb{N}, \theta=\left(\theta_1, \cdots, \theta_m\right)$, where $\theta_1, \cdots, \theta_m$ are positive real numbers, and let $\mathscr{H}_p[u]$ be defined in \eqref{def-H-p}. If $p \in \mathbb{N}$ such that $p \geq 2$, then
\begin{equation*}
\begin{aligned}
& \sum_{|\beta|=p-1}\left(\begin{array}{l}
p \\
\beta
\end{array}\right) \theta^{\beta^2} \sum_{k=1}^m \theta_k^{2 \beta_k+1}\left(A_k \nabla u_k\right) \cdot \nabla u^\beta \\
& =\sum_{|\beta|=p-2}\left(\begin{array}{c}
p \\
\beta
\end{array}\right) \theta^{\beta^2} u^{\beta} \sum_{k=1}^m \sum_{l=1}^m C_{k, l}(\beta)\left(A_k \nabla u_k\right) \cdot \nabla u_l,
\end{aligned}
\end{equation*}
where
\begin{equation*}
  C_{k, l}(\beta)=\left\{\begin{array}{cc}
\theta_k^{2 \beta_k+1} \theta_l^{2 \beta_l+1}, & k \neq \red{l}, \\
\theta_k^{4 \beta_k+4}, & k= \red{l}.
\end{array}\right.
\end{equation*}
\end{lemma}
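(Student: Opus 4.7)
The statement is a purely algebraic identity about sums over multi-indices, so my plan is an index manipulation based on the product rule
\begin{equation*}
\nabla u^\beta = \sum_{l=1}^m \beta_l\, u^{\beta-e_l}\,\nabla u_l,
\end{equation*}
where $e_l$ denotes the $l$-th standard basis vector of $\mathbb{Z}_+^m$ (with the convention that the $l$-term vanishes when $\beta_l=0$ thanks to the $\beta_l$ factor). Substituting this expansion into the left-hand side of the claimed identity would turn the single sum over $\beta$ with $|\beta|=p-1$ into a double sum
\begin{equation*}
\mathrm{LHS}=\sum_{l=1}^m\sum_{|\beta|=p-1}\beta_l\binom{p}{\beta}\theta^{\beta^2}u^{\beta-e_l}\sum_{k=1}^m\theta_k^{2\beta_k+1}(A_k\nabla u_k)\cdot\nabla u_l.
\end{equation*}

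Next, for each fixed $l$ I would perform the change of summation variable $\gamma:=\beta-e_l$, so that $\gamma$ ranges over multi-indices with $|\gamma|=p-2$. Two elementary identities govern this substitution. The combinatorial one,
\begin{equation*}
\beta_l\binom{p}{\beta}=\binom{p}{\gamma},
\end{equation*}
follows directly from the definition $\binom{p}{\alpha}=p!/(\alpha_1!\cdots\alpha_m!)$ together with $(\gamma_l+1)!/(\gamma_l+1)=\gamma_l!$. The multiplicative one,
\begin{equation*}
\theta^{\beta^2}=\theta^{\gamma^2}\cdot\theta_l^{2\gamma_l+1},
\end{equation*}
comes from expanding $(\gamma_l+1)^2=\gamma_l^2+2\gamma_l+1$ inside $\theta^{\beta^2}=\prod_i\theta_i^{\beta_i^2}$.

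Finally, I would identify the coefficients $C_{k,l}(\gamma)$. Since $(\gamma+e_l)_k=\gamma_k+\delta_{kl}$, the factor $\theta_k^{2\beta_k+1}$ becomes $\theta_k^{2\gamma_k+1}$ when $k\neq l$ and $\theta_l^{2\gamma_l+3}$ when $k=l$. Multiplying by the factor $\theta_l^{2\gamma_l+1}$ produced above yields $\theta_k^{2\gamma_k+1}\theta_l^{2\gamma_l+1}$ off the diagonal and $\theta_l^{4\gamma_l+4}$ on the diagonal, which matches exactly the definition of $C_{k,l}(\gamma)$. Interchanging the order of the $l$-summation with the newly introduced inner sum over $k$, and renaming $\gamma$ back to $\beta$, produces the right-hand side of the lemma.

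The only real obstacle is bookkeeping: one must be careful that the substitution $\gamma=\beta-e_l$ is well-defined (guaranteed by the vanishing of the $\beta_l=0$ terms) and that the exponents track correctly through the nonlinear factor $\theta^{\beta^2}$. No analytic input is needed; the statement is entirely a consequence of the multinomial identities above applied term by term.
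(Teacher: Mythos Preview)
Your proof is correct. The paper does not supply its own proof of this lemma; it is quoted verbatim from \cite{fitzgibbon2021reaction} (Lemma~4.2) and used as a black box. Your index-shift argument via $\gamma=\beta-e_l$, together with the multinomial identity $\beta_l\binom{p}{\beta}=\binom{p}{\gamma}$ and the exponent bookkeeping $\theta^{\beta^2}=\theta^{\gamma^2}\theta_l^{2\gamma_l+1}$, is exactly the standard way to verify such an identity and is presumably the same computation carried out in the cited reference.
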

Next, we need the following functional inequality.
\begin{lemma}\label{lem-func-ineq}
  Suppose $\Omega\subset \mathbb{R}^N$ such that the Gagliardo-Nirenberg inequality is satisfied and basic trace theorems apply (for instance $\Omega$ has a Lipschitz boundary). Let $a\geq 1$, $p\geq 2a$, $w: \overline{\Omega}\rightarrow\mathbb{R}_+$ such that $w^{\frac{b+p}{2}}\in W^{1,2}(\Omega)$ and there exists $K\geq0$ such that $\|w\|_{L^a(\Omega)}\leq K$. If $1\leq r<1+b+ \frac{2a}{N}$ and \red{$b\geq 0$}, then there exists $C_{\varepsilon}\geq 0$ (depending on $p,\varepsilon, r, a, b, K, \Omega,$ but independent of $w$) such that
  \begin{equation*}
    \int_{\Omega}w^{p-1+r} \,dx +\int_{\Omega}w^{b+p}\,dx \leq \varepsilon \left(\int_{\Omega} w^{p-2+b}|\nabla w|^{2}\,dx+\int_{\Omega} w^{b+p}\,dx\right)+C_{\varepsilon}.
  \end{equation*}
\end{lemma}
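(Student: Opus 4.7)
The plan is to reduce the inequality to a Gagliardo--Nirenberg interpolation on the transformed variable $v:=w^{(b+p)/2}$, which lies in $W^{1,2}(\Omega)$ by hypothesis. A direct computation gives
\[
\|\nabla v\|_{L^2}^2=\frac{(b+p)^2}{4}\int_\Omega w^{b+p-2}|\nabla w|^2\,dx,\qquad \|v\|_{L^2}^2=\int_\Omega w^{b+p}\,dx.
\]
Setting $\sigma:=\tfrac{2(p-1+r)}{b+p}$ and $q_0:=\tfrac{2a}{b+p}$, one also has $\|v\|_{L^\sigma}^\sigma=\int_\Omega w^{p-1+r}\,dx$ and $\|v\|_{L^{q_0}}^{q_0}=\int_\Omega w^a\,dx\le K^a$, so $\|v\|_{L^{q_0}}$ is controlled by a constant depending only on $K,a,b,p$. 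Every term in the desired inequality is thereby reexpressed as a norm of $v$.

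The next step is to apply the Gagliardo--Nirenberg interpolation inequality
\[
\|v\|_{L^\sigma(\Omega)}\le C\,\bigl(\|\nabla v\|_{L^2(\Omega)}+\|v\|_{L^2(\Omega)}\bigr)^\theta\,\|v\|_{L^{q_0}(\Omega)}^{1-\theta},
\]
with $\theta\in(0,1)$ determined by the scaling relation $\tfrac{1}{\sigma}=\theta(\tfrac{1}{2}-\tfrac{1}{N})+(1-\theta)\tfrac{1}{q_0}$. Solving this relation yields
\[
\sigma\theta=\frac{2N(p-1+r-a)}{N(b+p)-(N-2)a},
\]
and a short algebraic calculation shows that the growth hypothesis $r<1+b+\tfrac{2a}{N}$ is precisely equivalent to $\sigma\theta<2$. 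Raising the interpolation inequality to the $\sigma$-th power and then applying Young's inequality with conjugate pair $\bigl(\tfrac{2}{\sigma\theta},\tfrac{2}{2-\sigma\theta}\bigr)$ delivers, for every $\varepsilon>0$,
\[
\int_\Omega w^{p-1+r}\,dx\le \varepsilon\bigl(\|\nabla v\|_{L^2}^2+\|v\|_{L^2}^2\bigr)+C_\varepsilon,
\]
where $C_\varepsilon$ absorbs $\|v\|_{L^{q_0}}$ raised to a finite power. A completely analogous application of Gagliardo--Nirenberg and Young, now with interpolation exponent $\theta'=\tfrac{N(b+p-a)}{N(b+p-a)+2a}\in(0,1)$, yields the same type of bound for $\int_\Omega w^{b+p}\,dx$. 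Summing the two estimates, recalling the explicit formulas for $\|\nabla v\|_{L^2}^2$ and $\|v\|_{L^2}^2$, and rescaling $\varepsilon$ to absorb the constant $\tfrac{(b+p)^2}{4}$, produces exactly the claimed inequality.

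I anticipate two technical points. First, the algebraic verification that $\sigma\theta<2$ coincides with $r<1+b+\tfrac{2a}{N}$ is routine but must be done carefully. Second, the lower exponent $q_0=\tfrac{2a}{b+p}$ may be less than or equal to one (since $p\ge 2a$ forces $b+p\ge 2a$), which lies outside the most classical form of Gagliardo--Nirenberg. This can be handled either by invoking an extended form of the interpolation inequality valid for lower exponents in $(0,1]$ on bounded Lipschitz domains, or, equivalently, by applying Gagliardo--Nirenberg at $q_0=1$ and controlling $\|v\|_{L^1}=\int_\Omega w^{(b+p)/2}\,dx$ via the log-convexity interpolation $\|w\|_{L^{(b+p)/2}}\le\|w\|_{L^a}^\lambda\|w\|_{L^{b+p}}^{1-\lambda}$ with $\lambda=\tfrac{a}{b+p-a}$, followed by a final Young absorption step to reabsorb the resulting $\int_\Omega w^{b+p}$ factor into the right-hand side.
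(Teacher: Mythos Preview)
Your proposal is correct and follows essentially the same strategy as the paper: apply Gagliardo--Nirenberg to $v=w^{(b+p)/2}$, verify that the hypothesis $r<1+b+\tfrac{2a}{N}$ is exactly the condition making the $H^1$ exponent subcritical (your $\sigma\theta<2$), and finish with Young's inequality and the $L^a$ control. Your second alternative for handling $q_0\le 1$---interpolating $\|v\|_{L^1}$ between $\|w\|_{L^a}$ and a higher $L^p$ norm of $w$, then reabsorbing---is precisely what the paper does (it uses $L^{p+r-1}$ rather than $L^{b+p}$ as the upper endpoint, and precedes this with a trivial case split $r<b+1$ versus $r\ge b+1$, but these are cosmetic differences); your treatment of the $\int_\Omega w^{b+p}$ term is in fact more explicit than the paper's, which states the corresponding estimate without detailing the interpolation.
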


\begin{proof}
  By using Sobolev's embedding we have
  \begin{equation}\label{eq-L1-1}
  \begin{split}
    \int_{\Omega} &w^{p-2+b}|\nabla w|^{2}\,dx+\int_{\Omega} w^{b+p}\,dx\\
    &=\left(\frac{2}{b+p}\right)^{2}\int_{\Omega} |\nabla w^{\frac{b+p}{2}}|^{2}\,dx+\int_{\Omega} (w^{\frac{b+p}{2}})^{2}\,dx\\
    &\geq \min\left\{\left(\frac{2}{b+p}\right)^{2}, 1\right\}\|w^{\frac{b+p}{2}}\|^{2}_{H^{1}(\Omega)}.
  \end{split}
  \end{equation}
  Thus we have
  \begin{equation}\label{eq-l1}
    \int_{\Omega}w^{b+p}\,dx\leq \eps\left(\int_{\Omega} w^{p-2+b}|\nabla w|^{2}\,dx+ \int_{\Omega} w^{b+p}\,dx\right)+C(p,\varepsilon,b,K).
  \end{equation}
  Since
  \begin{equation*}
    \int_{\Omega}w^{p-1+r}\,dx=\|w^{\frac{p-1+r}{2}}\|^2_{\LO2},
  \end{equation*}
  if $r<b+1$. Then we have
  \begin{equation*}
    \int_{\Omega}w^{p-1+r} \,dx \leq \varepsilon \int_{\Omega} w^{b+p}\,dx +C_{\varepsilon}.
  \end{equation*}
  Next we consider $r\geq b+1$.
  Thanks to the Gagliardo-Nirenberg inequality, we have
  \begin{equation}\label{eq-L1-2}
  \begin{split}
    \int_{\Omega}w^{p-1+r}\,dx&=\|w^{\frac{b+p}{2}}\|_{L^{\frac{2(r+p-1)}{b+p}} (\Omega)}^{\frac{2(r+p-1)}{b+p}}\\
    &\leq C \|\nabla(w^{\frac{b+p}{2}})\|_{L^{2}(\Omega)}^{\frac{2(r+p-1)}{b+p}\alpha} \|w^{\frac{b+p}{2}}\|_{L^{1} (\Omega)}^{\frac{2(r+p-1)}{b+p}(1-\alpha)},
  \end{split}
  \end{equation}
  where
  \begin{equation*}
  \begin{split}
    \frac{b+p}{2(r+p-1)}=\alpha(\frac{1}{2}-\frac{1}{N})+1-\alpha.
  \end{split}
  \end{equation*}
It follows that
\begin{equation*}
  \begin{split}
    \alpha=\frac{N[2(r+p-1)-(b+p)]}{(r+p-1)(N+2)}
  \end{split}
\end{equation*}
and
\begin{equation*}
  \begin{split}
    1-\alpha=\frac{(r+p-1)(2-N)+N(b+p)}{(r+p-1)(N+2)}.
  \end{split}
\end{equation*}
Since $1\leq r<1+b+\frac{2a}{N}$, we find
\begin{equation*}
  \alpha\frac{2(r+p-1)}{b+p}<2.
\end{equation*}
We can  use Young's inequality to estimate
\begin{equation}\label{eq-L1-3}
  \begin{split}
    \|w^{\frac{b+p}{2}}\|_{L^{\frac{2(r+p-1)}{b+p}}(\Omega)}^{\frac{2(r+p-1)}{b+p}}
    &\leq \varepsilon \|\nabla(w^{\frac{b+p}{2}})\|_{L^{2}(\Omega)}^{2}+C_{\varepsilon}\|w^{\frac{b+p}{2}}\|_{L^{1} (\Omega)}^{\frac{(1-\alpha)2(r+p-1)}{b+p-(r+p-1)\alpha}},
  \end{split}
  \end{equation}
where
\begin{equation}\label{eq-L1-4}
  \begin{split}
    C_{\varepsilon}\|w^{\frac{b+p}{2}}\|_{L^{1} (\Omega)}^{\frac{(1-\alpha)2(r+p-1)}{b+p-(r+p-1)\alpha}}=C_{\varepsilon}\|w \|_{L^{\frac{b+p}{2}} (\Omega)}^{\frac{(1-\alpha)(b+p)(r+p-1)}{b+p-(r+p-1)\alpha}}.
  \end{split}
  \end{equation}
If $b+p=2a$, then this term is bounded by a constant depending on $K$, since $\|w\|_{L^a(\Omega)}\leq K$.
If $b+p>2a$, we use an interpolation inequality to have
\begin{equation}\label{eq-L1-5}
  \begin{split}
    C_{\varepsilon}\|w\|_{L^{\frac{b+p}{2}} (\Omega)}\leq C_{\varepsilon}\|w\|^{\theta}_{L^{p+r-1} (\Omega)}\|w\|^{1-\theta}_{L^{a} (\Omega)}\leq C_{\varepsilon, K} \|w\|^{\theta}_{L^{p+r-1} (\Omega)},
  \end{split}
\end{equation}
where $\theta\in(0,1)$ satisfies
$$\frac{2}{b+p}=\frac{\theta}{p+r-1}+\frac{1-\theta}{a}.$$
Note that
\begin{equation}\label{eq-L1-6}
\theta\frac{(1-\alpha)(b+p)(r+p-1)}{b+p-(r+p-1)\alpha}<p+r-1
\end{equation}
due to $r<b+1+ \frac{2a}{N}$ and $b\leq r-1<2r$.

From  \eqref{eq-L1-4}-\eqref{eq-L1-6} and Young's inequality, it follows that
\begin{equation}\label{eq-7}
  \begin{split}
    C_{\varepsilon}\|w \|^{\frac{(1-\alpha)(b+p)(r+p-1)}{b+p-(r+p-1)\alpha}}_{L^{\frac{b+p}{2}} (\Omega)}&\leq C_{\varepsilon, K}
    \|w \|^{\frac{(1-\alpha)(b+p)(r+p-1)}{b+p-(r+p-1)\alpha}\theta}_{L^{p+r-1} (\Omega)}\\
    &\leq \frac{1}{2} \| w \|^{p+r-1}_{L^{p+r-1}(\Omega)}+C(p,\varepsilon,b,K).
  \end{split}
\end{equation}
Inserting this into \eqref{eq-L1-2}, we get
\begin{equation*}
  \begin{split}
   \|w\|^{p+r-1}_{L^{p+r-1}(\Omega)}\leq 2\varepsilon \|w^{\frac{b+p}{2}}\|^{2}_{W^{1,2}(\Omega)} +C(p,\varepsilon,b,K).
  \end{split}
\end{equation*}
Combine this with \eqref{eq-L1-1} and \eqref{eq-l1} leads to the desired estimate
\begin{equation*}
    \int_{\Omega}w^{p-1+r} +\int_{\Omega}w^{b+p}\,dx \leq \varepsilon \left(\int_{\Omega} w^{p-2+b}|\nabla w|^{2}\,dx+\int_{\Omega} w^{b+p}\,dx\right)+C(p,\varepsilon,b,K).
  \end{equation*}
\end{proof}

\medskip
The following lemma shows results for the intermediate sum condition \eqref{A4}, which is crucial for constructing the $L^p$ energy function.
\begin{lemma}[\cite{fitzgibbon2021reaction}, Lemma 2.4]\label{lem-f-sum}
  Assume \eqref{A4}. Then there exist componentwise increasing functions $g_i: \mathbb{R}^{m-i} \rightarrow \mathbb{R}_{+}$ for $i=1, \ldots, m-1$ such that if  $\theta=\left(\theta_1, \ldots, \theta_m\right) \in(0, \infty)^m$ satisfies $\theta_m>0$ and $\theta_i \geq g_i\left(\theta_{i+1}, \ldots, \theta_m\right)$ for all $i=1, \ldots, m-1$, then
$$
\sum_{i=1}^m \theta_i f_i^{\eps}\left(x, t, u^{\varepsilon}\right) \leq K_\theta\left(1+\sum_{i=1}^m\left(u_i^{\varepsilon}\right)^r\right) \quad \forall\left(x, t, u^{\varepsilon}\right) \in \Omega \times \mathbb{R}_{+} \times \mathbb{R}_{+}^m
$$
for some constant $K_\theta$ depending on $\theta, g_i$, and $K_3$ in \eqref{A4}.
\end{lemma}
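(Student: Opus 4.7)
The idea is to rewrite the target linear combination as a nonnegative combination of the row sums appearing on the left-hand side of \eqref{A4}. For $i=1,\ldots,m$ set $v_i^\eps := \sum_{j=1}^{i} a_{ij} f_j^\eps(x,t,u^\eps)$. Since $f_j^\eps = f_j/(1+\eps\sum_k|f_k|)$ divides by a factor at least $1$, any upper bound valid for $v_i$ remains valid for $v_i^\eps$; in particular \eqref{A4} yields $v_i^\eps \leq K_3\bigl(1+\sum_k (u_k^\eps)^r\bigr)$. The goal is therefore to produce multipliers $\mu_1,\ldots,\mu_m\geq 0$ with
\begin{equation*}
\sum_{i=1}^m \theta_i f_i^\eps \;=\; \sum_{i=1}^m \mu_i\, v_i^\eps, \qquad\text{equivalently}\qquad \theta_j \;=\; \sum_{i=j}^{m} a_{ij}\mu_i \quad (j=1,\ldots,m).
\end{equation*}
Since $A$ is lower triangular with positive diagonal, the system on the right is upper triangular in $\mu$ and is solved by the backward recursion $\mu_m = \theta_m/a_{mm}$ and $\mu_i = a_{ii}^{-1}\bigl(\theta_i - \sum_{k=i+1}^{m} a_{ki}\mu_k\bigr)$ for $i=m-1,\ldots,1$.

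The remaining, and main, difficulty is to enforce $\mu_i\geq 0$ via a sufficient condition on $\theta_i$ that is \emph{componentwise increasing} in $(\theta_{i+1},\ldots,\theta_m)$. The sharp threshold $\sum_{k>i} a_{ki}\mu_k(\theta_{i+1},\ldots,\theta_m)$ need not be monotone, since each $\mu_k$ depends on $\theta_{k},\ldots,\theta_m$ with possibly mixed signs; already for $m=3$ one can exhibit entries of $A$ that make the threshold for $\theta_1$ \emph{decreasing} in $\theta_3$. To sidestep this I would replace the sharp threshold by a crude but monotone overestimate. A downward induction on $i$ shows that, as long as $\mu_k\geq 0$ for $k>i$, the recursion forces the elementary bound $0\leq \mu_k\leq \theta_k/a_{kk}$. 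Hence the choice
\begin{equation*}
g_i(\theta_{i+1},\ldots,\theta_m) \;:=\; \sum_{k=i+1}^{m} \frac{a_{ki}}{a_{kk}}\,\theta_k
\end{equation*}
is a nonnegative linear combination (hence componentwise increasing in each argument) whose domination by $\theta_i$ suffices to keep $\mu_i\geq 0$ and to propagate the induction.

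Running the induction from $i=m-1$ down to $i=1$ yields $\mu_i\geq 0$ for every $i$ whenever $\theta_m>0$ and $\theta_i\geq g_i(\theta_{i+1},\ldots,\theta_m)$ for $i=1,\ldots,m-1$. Summing $\mu_i v_i^\eps \leq \mu_i K_3(1+\sum_k(u_k^\eps)^r)$ then gives
\begin{equation*}
\sum_{i=1}^m \theta_i f_i^\eps \;=\; \sum_{i=1}^m \mu_i v_i^\eps \;\leq\; \Bigl(\sum_{i=1}^m \mu_i\Bigr)\, K_3 \Bigl(1+\sum_{k=1}^m (u_k^\eps)^r\Bigr),
\end{equation*}
which is the desired inequality with $K_\theta := K_3\sum_{i=1}^m \mu_i$, a constant depending on $\theta$, the entries of $A$ (through the recursion defining the $\mu_i$), and $K_3$. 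The main obstacle throughout is thus the monotonicity of $g_i$; once the a priori bound $\mu_i\leq \theta_i/a_{ii}$ is observed, the rest of the argument is routine.
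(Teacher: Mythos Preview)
The paper does not actually prove this lemma: it is stated as \cite[Lemma 2.4]{fitzgibbon2021reaction} and invoked without argument. Your proposal is therefore supplying details the paper omits, and the argument you give is correct. Writing $\sum_i \theta_i f_i^\eps$ as a linear combination $\sum_i \mu_i v_i^\eps$ of the row sums $v_i^\eps=\sum_{j\le i}a_{ij}f_j^\eps$, solving the resulting upper-triangular system by backward recursion, and then guaranteeing $\mu_i\ge 0$ via the componentwise-increasing overestimate $g_i(\theta_{i+1},\ldots,\theta_m)=\sum_{k>i}(a_{ki}/a_{kk})\theta_k$ is exactly the standard mechanism behind the cited result; the key observation $0\le \mu_k\le \theta_k/a_{kk}$ (valid once $\mu_j\ge 0$ for $j>k$, since the off-diagonal entries $a_{jk}$ are nonnegative) is precisely what makes the monotone choice of $g_i$ work.

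One small point of phrasing: your sentence ``any upper bound valid for $v_i$ remains valid for $v_i^\eps$'' is correct but could be sharpened. What you are really using is that the common scalar factor $(1+\eps\sum_k|f_k|)^{-1}\in(0,1]$ pulls out of the sum, so $v_i^\eps = (1+\eps\sum_k|f_k|)^{-1}\sum_{j\le i}a_{ij}f_j$; whether this sum is positive or negative, multiplying by a factor in $(0,1]$ preserves the inequality $\le K_3(1+\sum_k (u_k^\eps)^r)$ since the right-hand side is nonnegative. This is exactly what the paper means when it remarks (just below \eqref{appro-system}) that the approximated nonlinearities $f_i^\eps$ still satisfy \eqref{A1}--\eqref{A5}.
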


\medskip
We now use the $L^p$-energy functions  to obtain the $L^p$-estimates of $u^{\eps}$.
\begin{lemma}\label{lem-L-p-bound}
Assume \eqref{D:cond-1}, \eqref{D:cond-2},  \eqref{Q1}, \eqref{Q2}, \eqref{A1}, \eqref{A2},  \eqref{A3} and \eqref{A4} with $1\leq r<1+b+\frac{2}{N}$. Then for any $1 \leq p<\infty$ and any $T>0$, there exists a constant $C_{T, p}$ depending on $T, p$ and other parameters such that
$$
\sup _{t \in(0, T)}\|u_i^{\eps}(t)\|_{L^p(\Omega)} \leq C_{T, p} \quad \forall i=1, \ldots, m .
$$
 \end{lemma}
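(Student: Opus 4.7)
The plan is to test the approximate system \eqref{appro-system} against the $L^p$-energy functional defined in \eqref{def-Lp-energy-fun}--\eqref{def-H-p} for integer $p\geq 2$; the case $p=1$ is Lemma \ref{L1-bound}, and any $p\in(1,2)$ follows by interpolation between the $L^1$ and $L^2$ bounds. Differentiating $\mathscr{L}_p[u^\eps]$ in $t$ via Lemma \ref{technoque-Lem-1}, substituting $\partial_t u_j^\eps$ from \eqref{appro-system}, and integrating by parts against the homogeneous Neumann condition would split
\begin{equation*}
\frac{d}{dt}\mathscr{L}_p[u^\eps](t) \;=\; -I_{\mathrm{diff}}(t) + I_{\mathrm{react}}(t),
\end{equation*}
where
\begin{equation*}
I_{\mathrm{react}}(t) \;=\; \sum_{|\beta|=p-1}\binom{p}{\beta}\theta^{\beta^2}\int_\Omega (u^\eps)^\beta \sum_{j=1}^m \theta_j^{2\beta_j+1} f_j^\eps(u^\eps)\,dx,
\end{equation*}
and $I_{\mathrm{diff}}$ is the quadratic form in $\nabla u_k^\eps$ produced by Lemma \ref{technoque-Lem-2}, involving the matrix $[C_{k,l}(\beta)]$ weighted by $D_k\Phi(u^\eps)$.

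For the diffusion, the diagonal coefficient $C_{k,k}(\beta)=\theta_k^{4\beta_k+4}$ combined with the ellipticity \eqref{D:cond-1} and the pointwise lower bound $\Phi(u^\eps)\ge M(u_k^\eps)^b$ from \eqref{Q2} yields a term bounded below by $C_{k,k}(\beta)\lambda M (u_k^\eps)^b |\nabla u_k^\eps|^2$ on each diagonal entry. The symmetric off-diagonal coefficients $C_{k,l}(\beta)=\theta_k^{2\beta_k+1}\theta_l^{2\beta_l+1}$ together with \eqref{D:cond-2} and Cauchy--Schwarz generate terms of the form $C_{k,l}(\beta)\|D_k\|_\infty\Phi(u^\eps)|\nabla u_k^\eps||\nabla u_l^\eps|$, absorbed by the diagonal upon choosing $\theta_1,\ldots,\theta_m$ so that $\theta_k^{2\beta_k+3}\lambda > \|D_k\|_\infty\sum_{l\neq k}\theta_l^{2\beta_l+1}$ uniformly over $\beta$ with $|\beta|=p-2$. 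This produces a clean dissipation
\begin{equation*}
I_{\mathrm{diff}}(t) \;\ge\; c_p \sum_{k=1}^m \int_\Omega (u_k^\eps)^{p-2+b}\,|\nabla u_k^\eps|^2 \, dx.
\end{equation*}
For the reaction, I would apply Lemma \ref{lem-f-sum} with the $\beta$-dependent coefficients $\tilde\theta_j=\theta_j^{2\beta_j+1}$; since $g_i$ is componentwise increasing and $\tilde\theta_j\le\theta_j^{2p-1}$, an iterative bottom-up choice of $\theta_i$ produces a constant uniform over all $\beta$ with $|\beta|=p-1$. Controlling the product $(u^\eps)^\beta(u_i^\eps)^r$ by Young's inequality then gives $I_{\mathrm{react}}(t)\le C_p\int_\Omega\bigl(1+\sum_i (u_i^\eps)^{p-1+r}\bigr)\,dx$.

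With these two inequalities combined, I would invoke Lemma \ref{lem-func-ineq} with $a=1$---legitimate thanks to the $L^1$-bound in Lemma \ref{L1-bound}---under the hypothesis $1\le r<1+b+2/N$ and $p\ge 2a=2$. For a sufficiently small parameter there, the term $\int_\Omega (u_i^\eps)^{p-1+r}$ is absorbed into the dissipation $\int_\Omega (u_i^\eps)^{p-2+b}|\nabla u_i^\eps|^2$ modulo an additive constant, reducing the differential inequality to $\frac{d}{dt}\mathscr{L}_p[u^\eps](t) \le C_{p,T}$. Direct integration in time, using that $\mathscr{L}_p[u^\eps](0)$ is controlled by $\|u_{i,0}\|_{L^\infty}$ and $|\Omega|$, yields $\mathscr{L}_p[u^\eps](t)\le C_{p,T}$ on $[0,T]$, which is equivalent to the claimed $L^p$-bound by the nonnegativity of $u^\eps$ and the equivalence $\mathscr{L}_p[u^\eps]\sim \sum_i \|u_i^\eps\|_{L^p}^p$.

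The main obstacle I anticipate is the simultaneous calibration of $\theta=(\theta_1,\ldots,\theta_m)$: the same $\theta$ must render the matrix $[C_{k,l}(\beta)]$ diagonally dominant against $D_k\Phi(u^\eps)$ for \emph{every} $\beta$ with $|\beta|=p-2$ (to extract the dissipation) \emph{and} satisfy the intermediate-sum hypothesis of Lemma \ref{lem-f-sum} for \emph{every} $\beta$ with $|\beta|=p-1$ (to control the reaction). Both constraints involve $\theta_j^{2\beta_j+1}$, forcing a hierarchical choice of $\theta_i$ from $i=m$ down to $i=1$, with the bounds growing with $p$ through the maximum exponent $2p-1$. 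This dependence is precisely what forces $C_{T,p}$ in the conclusion to depend on $p$.
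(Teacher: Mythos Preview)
Your overall strategy---testing against $\mathscr{L}_p[u^\eps]$, splitting into diffusion and reaction, using Lemma~\ref{lem-f-sum} with $\tilde\theta_j=\theta_j^{2\beta_j+1}$ for the reaction, then closing with Lemma~\ref{lem-func-ineq} at $a=1$---is exactly the paper's. The reaction part and the final ODE step are fine (the paper actually obtains $\mathscr{L}_p'+\sigma\mathscr{L}_p\le C$, but your weaker $\mathscr{L}_p'\le C$ suffices on $(0,T)$).

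There is, however, a genuine gap in your treatment of the diffusion term. The diagonal dominance condition you write down,
\[
\theta_k^{2\beta_k+3}\lambda \;>\; \|D_k\|_\infty\sum_{l\neq k}\theta_l^{2\beta_l+1}\quad\text{for all }|\beta|=p-2,
\]
cannot in general be satisfied simultaneously for all $k$ and all $\beta$ once $p\ge 4$. Take $m=2$: the extreme multi-indices $\beta=(0,p-2)$ with $k=1$ and $\beta=(p-2,0)$ with $k=2$ force $\theta_1^3\lambda>C\theta_2^{2p-3}$ and $\theta_2^3\lambda>C\theta_1^{2p-3}$; multiplying gives $(\theta_1\theta_2)^{2p-6}<(\lambda/C)^2$, which for $p\ge 4$ and $\|D\|_\infty>\lambda$ forces $\theta_1\theta_2<1$. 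This is incompatible with the hierarchical lower bounds on $\theta_i$ demanded by Lemma~\ref{lem-f-sum}. Diagonal dominance via symmetric Cauchy--Schwarz is strictly stronger than the positive definiteness you actually need, and here that gap matters.

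The paper resolves this by a congruence: writing the quadratic form as $\nabla u^\top B(\beta)\nabla u$ with blocks $B_{k,l}(\beta)=\tfrac12 C_{k,l}(\beta)(D_k+D_l)$, one observes that conjugating by the block-diagonal matrix $P=\mathrm{diag}\bigl(\theta_k^{2\beta_k+1}I_N\bigr)$ yields $P^{-1}B(\beta)P^{-1}=\widetilde B$ with blocks $\widetilde B_{k,k}=\theta_k^2 D_k$ and $\widetilde B_{k,l}=\tfrac12(D_k+D_l)$ for $k\ne l$. Since $\widetilde B$ is \emph{independent of $\beta$}, positive definiteness of $B(\beta)$ for all $\beta$ reduces to the single, $\beta$-free requirement that all $\theta_k$ exceed a threshold depending only on $\lambda$, $\|D_k\|_\infty$ and $m$. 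This is compatible with the reaction constraint $\theta_i\ge g_i(\theta_{i+1}^{2p-1},\ldots,\theta_m^{2p-1})$ by choosing $\theta_m$ above the threshold and proceeding downward. This congruence trick is the missing ingredient in your diffusion estimate.
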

\begin{proof}
Let $u^{\eps}$ solve \eqref{appro-system}, and $\mathscr{L}_p(t):=\mathscr{L}_p\left[u^{\eps}\right](t)$ be defined in \eqref{def-Lp-energy-fun}. Then
\begin{equation*}
\begin{split}
\mathscr{L}_p^{\prime}(t)= & \int_{\Omega} \sum_{|\beta|=p-1}\left(\begin{array}{c}
p \\
\beta
\end{array}\right) \theta^{\beta^2} u^{\eps}(x, t)^\beta \sum_{k=1}^m \theta_k^{2 \beta_k+1} \frac{\partial}{\partial t} u_k^{\eps}(x, t)\, d x \\
= & \int_{\Omega} \sum_{|\beta|=p-1}\left(\begin{array}{c}
p \\
\beta
\end{array}\right) \theta^{\beta^2} u^{\eps}(x, t)^\beta \sum_{k=1}^m \theta_k^{2 \beta_k+1} \\
& \times\left[\nabla \cdot\left(D_k(x, t)\,\Phi(u^{\eps})\, \nabla u_k^{\eps}(x, t)\right)+f^{\eps}_k\left(u^{\eps}(x, t)\right)\right]\, d x\\
= & \int_{\Omega} \sum_{|\beta|=p-1}
\left(\begin{array}{c}
p \\
\beta
\end{array}\right)
\theta^{\beta^2} u^{\eps}(x, t)^\beta \sum_{k=1}^m \theta_k^{2 \beta_k+1}\nabla \cdot\left(D_k(x, t) \,\Phi(u^{\eps})\, \nabla u_k^{\eps}(x, t)\right)\, d x\\
&+ \int_{\Omega} \sum_{|\beta|=p-1}\left(\begin{array}{c}
p \\
\beta
\end{array}\right) \theta^{\beta^2} u^{\eps}(x, t)^\beta \sum_{k=1}^m \theta_k^{2 \beta_k+1} f^{\eps}_k\left(u^{\eps}(x, t)\right)\, d x\\
=:&(I)+(II).
\end{split}
\end{equation*}

For (I),  we apply Lemma \ref{technoque-Lem-2} and integration by parts, we have
\begin{equation*}
\begin{split}
(I):=& \int_{\Omega} \sum_{|\beta|=p-1}
\left(\begin{array}{c}
p \\
\beta
\end{array}\right)
\theta^{\beta^2} u^{\eps}(x, t)^\beta \sum_{k=1}^m \theta_k^{2 \beta_k+1}\nabla \cdot\left(D_k(x, t) \,\Phi(u^{\eps})\, \nabla u_k^{\eps}(x, t)\right)\, d x\\
=&-\int_{\Omega} \sum_{|\beta|=p-2}
\left(\begin{array}{l}
p \\
\beta
\end{array}\right)
\theta^{\beta^2} u^{\varepsilon}(x, t)^\beta \,\Phi(u^{\eps})\, \sum_{k=1}^m \sum_{l=1}^m C_{k, l}(\beta)\left(D_k(x,t) \nabla u_k^{\eps}\right) \cdot \nabla u_l^{\eps} \,dx
\end{split}
\end{equation*}
with
\begin{equation*}
C_{k, l}(\beta)=\left\{\begin{array}{cc}
\theta_k^{2 \beta_k+1} \theta_l^{2 \beta_l+1}, & k \neq l, \\
\theta_k^{4 \beta_{k}+4}, & k=l.
\end{array}\right.
\end{equation*}
For a given $\beta$ with $|\beta|=p-2$, create an $m N \times m N$ matrix $B(\beta)$ made up of $m^2$ blocks $B_{k, l}(\beta)$, each of size $N \times N$, where
$$
B_{k, l}(\beta)=\frac{1}{2} C_{k, l}(\beta)\left(D_k+D_l\right).
$$
Note that for each $k=1, \ldots, m$,
$$
B_{k, k}(\beta)=\theta_k^{4 \beta_k+4} D_k.
$$
Also,
$$
I=-\int_{\Omega} \sum_{|\beta|=p-2}\left(\begin{array}{c}
p \\
\beta
\end{array}\right) \theta^{\beta^2} u^{\eps}(x, t)^{\beta} \,\Phi(u^{\eps})\, \nabla u^{\eps}(x, t)^T B(\beta) \nabla u^{\eps}(x, t)\, dx,
$$
where $\nabla u^{\eps}(x, t)$ is a column vector of size $m N \times 1$, and for $j=1, \ldots, m$, entries $N(j-1)+1$ to $N j$ of $\nabla u^{\varepsilon}(x, t)$ are $\nabla u_j^{\eps}(x, t)$. We claim that if all of the entries in $\theta$ are sufficiently large, then $B(\beta)$ is positive definite. In fact, it is a simple matter to show it is positive definite if and only if the $m N \times m N$ matrix $\widetilde{B}(\beta)$ made up of $N \times N$ blocks
$$
\widetilde{B}_{k, l}(\beta)=\left\{\begin{array}{cc}
\theta_k^2 D_k, & k=l \\
\frac{1}{2}\left(D_k+D_l\right), & k \neq l
\end{array}\right.
$$
is positive definite. However, if we recall the uniform positive definiteness of the matrices $D_k$, we can show that if $\theta_i$ is sufficiently large for each $i$, then we have what we need. Consequently, returning to the above, we can show there exists $\alpha_p>0$ so that
\begin{equation}\label{eq-E}
  \begin{split}
    \mathscr{L}_p^{\prime}(t)+&\alpha_p \sum_{k=1}^m \int_{\Omega}u_k^{\eps}(x,t)^{b+p-2} \left|\nabla u^{\eps}_k(x, t)\right|^2 \,dx \\
    &\leq \int_{\Omega} \sum_{|\beta|=p-1}
    \left(\begin{array}{l}
    p \\
    \beta
    \end{array}\right)
    \theta^{\beta^2} u^{\eps}(x, t)^\beta \sum_{k=1}^m \theta_k^{2 \beta_k+1}f_k^{\eps}\left(u^{\eps}(x, t)\right)\, dx,
  \end{split}
\end{equation}
where we used \eqref{Q2}.

From \eqref{A4} and Lemma \ref{lem-f-sum},  we choose the components of $\theta = (\theta_i)$ inductively so that $\theta_i$ are sufficiently large that the previous positive definiteness condition of $\widetilde{B}_{k, l}(\beta)$ is satisfied, and
  \begin{equation}\label{star}
     \begin{split}
         \theta_i\geq g_i(\theta_{i+1}^{2p-1},\cdots,\theta_{m}^{2p-1}) \quad\text{for}\quad i=1,\cdots,m-1,
      \end{split}
  \end{equation}
where $g_i$ are functions constructed in Lemma \ref{lem-f-sum}. Note that $\theta_i \leq \theta_i^{2\beta_i + 1} \leq \theta_i^{2p - 1}$. Since $g_i$ is componentwise increasing,  the relation \eqref{star} implies
\begin{equation*}
	\theta_i^{2\beta_i + 1} \geq g_i\bra{\theta_{i+1}^{2\beta_{i+1}+1}, \ldots, \theta_m^{2\beta_m + 1}}, \quad \forall i=1,\ldots, m-1.
\end{equation*}
Now we can apply Lemma \ref{lem-f-sum}, to obtain some $K_{\widetilde{\theta}}$  so that for all $\beta\in\mathbb{Z}_+$ with $|\beta|=p-1$, we have
  \begin{equation*}
     \begin{split}
         \sumi \theta_{i}^{2\beta_i+1}f^{\eps}_i(x,t,u^{\eps})\leq K_{\widetilde{\theta}}\bra{1+\sumi (u^{\eps}_i)^r}, \quad \forall (x,t,u)\in \Omega\times\mathbb{R}_+\times \mathbb{R}^m_+.
      \end{split}
  \end{equation*}
It follows that there exists $C_p>0$, such that \eqref{eq-E} implies
\begin{equation*}
  \begin{split}
    \mathscr{L}_p^{\prime}(t)+&\alpha_p \sum_{k=1}^m \int_{\Omega}u_k^{\eps}(x,t)^{b+p-2} \left|\nabla u^{\eps}_k(x, t)\right|^2 \,dx \leq C\left(\intO\sumi u_i^{\eps}(x,t)^{p-1+r}\,dx+1\right).
  \end{split}
\end{equation*}
Thus, we have
\begin{equation}\label{p}
  \begin{split}
    \mathscr{L}_p^{\prime}(t)+&\alpha_p \sum_{k=1}^m \int_{\Omega}\left(u_k^{\eps}(x,t)^{b+p-2} \left|\nabla u^{\eps}_k(x, t)\right|^2 +u_i^{\eps}(x,t)^{b+p}\right)\,dx\\
    & \leq C\left(\intO\sumi \left(u_i^{\eps}(x,t)^{p-1+r} + u_i^{\eps}(x,t)^{b+p}\right) \,dx+1\right).
  \end{split}
\end{equation}
Combining Lemma \ref{lem-func-ineq}, we get
\begin{equation}\label{eq-p}
  \begin{split}
    \mathscr{L}_p^{\prime}(t)+&\frac{\alpha_p}{2} \sum_{k=1}^m \int_{\Omega}\left(u_k^{\eps}(x,t)^{b+p-2} \left|\nabla u^{\eps}_k(x, t)\right|^2 +u_i^{\eps}(x,t)^{b+p}\right)\,dx \leq C(T,p).
  \end{split}
\end{equation}
This implies
\begin{equation*}
  \begin{split}
    \mathscr{L}_p^{\prime}(t) + \sigma\mathscr{L}_p(t)  \leq C(T,p).
  \end{split}
\end{equation*}
Thus, we have
\begin{equation*}
\sup _{t \in(0, T)}\|u_i^{\eps}(t)\|_{L^p(\Omega)} \leq C_{T, p} \quad \forall i=1, \ldots, m .
\end{equation*}
This finishes the proof of Lemma \ref{lem-L-p-bound}.
\end{proof}

\begin{lemma}\label{lem-L-infty-bound}
Assume \eqref{D:cond-1}, \eqref{D:cond-2},  \eqref{Q1}, \eqref{Q2}, \eqref{A1}, \eqref{A2},  \eqref{A3}, \eqref{A5} and \eqref{A4} with $1\leq r<1+b+\frac{2}{N}$. Then for any $T>0$, the solution of \eqref{appro-system} is bounded in $L^{\infty}$ in time, i.e.,
\begin{equation*}
  \|u_i^{\eps}(t)\|_{L^{\infty}(Q_T)} \leq C_{T} \quad \forall i=1, \cdots, m
\end{equation*}
for some constant $C_T$ depending on $T$ and independent of $\eps>0$.
\end{lemma}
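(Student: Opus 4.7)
The plan is to upgrade the $L^p$-bounds from Lemma \ref{lem-L-p-bound} to an $L^\infty$-bound by a Moser-type iteration, exploiting the polynomial growth \eqref{A5} of the reaction terms together with the degenerate-diffusion lower bound \eqref{Q2}. First, for each fixed $i\in\{1,\ldots,m\}$ and every $p\geq 2$, I would test the equation of $u_i^\eps$ with $(u_i^\eps)^{p-1}$, use \eqref{D:cond-1}, \eqref{Q2}, the Neumann boundary condition and integration by parts. Using $(p-1)\int D_i\Phi(u^\eps)|\nabla u_i^\eps|^2(u_i^\eps)^{p-2}\,dx \geq \tfrac{4(p-1)M\lambda}{(b+p)^2}\int|\nabla(u_i^\eps)^{(b+p)/2}|^2\,dx$ and \eqref{A5} yields
\begin{equation*}
\frac{1}{p}\frac{d}{dt}\intO (u_i^\eps)^p\,dx + c_1(p)\intO \bigl|\nabla (u_i^\eps)^{(b+p)/2}\bigr|^2\,dx \leq C\intO (u_i^\eps)^{p-1}\Bigl(1+\sum_{j=1}^m (u_j^\eps)^l\Bigr)\,dx.
\end{equation*}

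Second, because Lemma \ref{lem-L-p-bound} supplies $\eps$-uniform bounds for $\|u_j^\eps\|_{L^\infty(0,T;\LO{a})}$ at \emph{every} finite $a\geq 1$, I can choose $a$ so large that $l<1+b+\tfrac{2a}{N}$, which is always possible because $l$ is a fixed finite exponent. Applying Lemma \ref{lem-func-ineq} with this $a$ (and $l$ in place of $r$) allows me to bound
\begin{equation*}
\intO (u_j^\eps)^{p-1+l}\,dx \leq \eps_0 \bra{\intO (u_j^\eps)^{b+p-2}|\nabla u_j^\eps|^2\,dx + \intO (u_j^\eps)^{b+p}\,dx} + C_{\eps_0,p}
\end{equation*}
for arbitrarily small $\eps_0>0$. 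Absorbing the diffusion-like term into the left-hand side of the energy inequality and running Gronwall's lemma produces an $\eps$-independent bound on $\|u_i^\eps\|_{L^\infty(0,T;\LO{p})}$ for every finite $p$ -- but with constants that a priori blow up as $p\to\infty$.

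Third, to reach $L^\infty(Q_T)$ I would iterate this estimate along a geometric sequence $p_k=2^k p_0$ with $p_0$ sufficiently large. Combining the energy inequality with the Gagliardo-Nirenberg interpolation applied to $v_k:=(u_i^\eps)^{(b+p_k)/2}$ yields a Moser-type recursion of the form
\begin{equation*}
\|u_i^\eps\|_{L^\infty(0,T;\LO{p_{k+1}})}^{p_{k+1}} \leq K^{p_{k+1}}\max\bigl\{1,\|u_i^\eps\|_{L^\infty(0,T;\LO{p_k})}\bigr\}^{p_{k+1}\mu_k},
\end{equation*}
with $K$ independent of $k$ and exponents $\mu_k\to 1$ sufficiently slowly; taking $p_{k+1}$-th roots and passing to the limit $k\to\infty$ yields the desired uniform $L^\infty$-bound. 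The main obstacle is the careful bookkeeping of the dependence of all constants on $p$: the exponents in the interpolation must remain admissible along the entire sequence, and the constants $C_{\eps_0,p}$ coming from Lemma \ref{lem-func-ineq} must grow at most geometrically in $p$ so that the product $\prod_k K^{1/p_k}$ converges. The degeneracy parameter $b$ is actually helpful here, since it augments every exponent on the diffusion side, whereas the polynomial order $l$ of the nonlinearity is tamed by the fact that arbitrary finite $L^a$-norms are already at our disposal from Lemma \ref{lem-L-p-bound}.
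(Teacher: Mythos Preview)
Your approach is genuinely different from the paper's, and both are viable, but there are a couple of issues worth flagging.

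\textbf{What the paper does.} The paper does \emph{not} run a Moser iteration in $p$. Instead it first uses Lemma~\ref{lem-L-p-bound} together with \eqref{A5} to record that $f_i^\eps\in L^q(Q_T)$ for every $q<\infty$, with $\eps$-independent bounds. It then freezes $f_i^\eps$ as given $L^q$ data and performs a De~Giorgi level-set iteration on the single scalar equation for $u_i^\eps$: set $v_j=(u_i^\eps-k+k/2^j)_+$, test the equation with $v_{j+1}$, and derive a recursion for $Y_j=\|v_j\|_{W(0,T)}^2$. The degeneracy is handled by the crucial observation that on the level set $\{u_i^\eps\ge k/2\}$ one has $\Phi(u^\eps)\ge M(u_i^\eps)^b\ge M(k/2)^b$, so the diffusion is \emph{uniformly non-degenerate} there. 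One then chooses $k$ large so that $Y_1$ is small and applies a standard geometric-convergence lemma. This sidesteps entirely the bookkeeping of how constants depend on $p$.

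\textbf{A gap in your Step~2.} Your displayed bound applies Lemma~\ref{lem-func-ineq} to $\int(u_j^\eps)^{p-1+l}$ and proposes to absorb the resulting $\int (u_j^\eps)^{b+p-2}|\nabla u_j^\eps|^2$ into the left-hand side. But the energy inequality you wrote is for the $i$-th equation only, so for $j\neq i$ there is nothing on the left to absorb into. This is easily repaired---for $j\neq i$ simply bound $\int(u_j^\eps)^{p-1+l}\le C_{T,p}$ directly from Lemma~\ref{lem-L-p-bound}, and reserve Lemma~\ref{lem-func-ineq} for $j=i$---but as written the absorption step fails. Note also that this repair injects constants $C_{T,p}$ coming from the $\mathscr L_p$-energy of Lemma~\ref{lem-L-p-bound}, whose $p$-dependence you would then have to trace through the entire iteration in Step~3; this is exactly the bookkeeping you identify as the main obstacle, and it is not obviously benign.

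\textbf{Comparison.} Your route keeps the degeneracy in the test function (working with $\nabla(u_i^\eps)^{(b+p)/2}$) and pays for it with delicate constant-tracking along $p_k=2^kp_0$. The paper's route removes the degeneracy by restricting to level sets and pays for it with the De~Giorgi machinery, but the constants there are explicit and the convergence lemma is standard. A hybrid that makes your argument much cleaner is to adopt the paper's first move---treat $f_i^\eps$ as fixed $L^q$ data via Lemma~\ref{lem-L-p-bound} and \eqref{A5}---and only then run your Moser iteration on the resulting scalar equation; this decouples the cross-terms from the iteration and makes the $p$-dependence tractable.
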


\begin{proof}
  The proof is similar to the proof of Lemma 2.3 in Ref. \cite{fellner2020Global-reg}. For the convenience of reading, we give the specific proof process in the Appendix \ref{appendix}.
\end{proof}

\subsection{Passing to the limit - Global existence}\label{subsec-existence}

\begin{lemma}\label{lem-nabla-u}
  Assume \eqref{D:cond-1}. For any $k>0$, we have
  \begin{equation*}
    \int^T_0\intO \chi_{\{|u^{\eps}_i|\leq k\}} \Phi(u^{\eps}) |\nabla u^{\eps}_i|^2 \,dxdt \leq \frac{k}{\lambda} \left(\|u_0\|_{\LO 1}+\|f^{\eps}_i(u^{\eps})\|_{\LQ 1}\right).
  \end{equation*}
\end{lemma}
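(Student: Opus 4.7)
\medskip

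\textbf{Proof plan.} The plan is to test the equation for $u_i^\eps$ with a truncation of $u_i^\eps$ at level $k$. Since $u_i^\eps\ge 0$, define
\begin{equation*}
T_k(s):=\min\{s,k\}\qquad\text{and}\qquad S_k(s):=\int_0^s T_k(\tau)\,d\tau\quad\text{for }s\ge 0,
\end{equation*}
so that $S_k(s)\ge 0$ and $S_k(s)\le k\,s$ for all $s\ge 0$, and $\nabla T_k(u_i^\eps)=\chi_{\{u_i^\eps\le k\}}\nabla u_i^\eps$.

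Multiplying the PDE in \eqref{appro-system} by $T_k(u_i^\eps)$ and integrating over $Q_T$ gives three terms which I will handle separately. For the time derivative, I use $\partial_t u_i^\eps\cdot T_k(u_i^\eps)=\partial_t S_k(u_i^\eps)$; integrating in time and discarding the nonnegative quantity $\int_\Omega S_k(u_i^\eps(T))\,dx\ge 0$ yields the lower bound
\begin{equation*}
\int_0^T\!\!\int_\Omega \partial_t u_i^\eps\, T_k(u_i^\eps)\,dxdt \;\ge\; -\int_\Omega S_k(u_{i,0}^\eps)\,dx \;\ge\; -k\,\|u_{i,0}^\eps\|_{L^1(\Omega)}.
\end{equation*}
For the diffusion term, I integrate by parts, use the Neumann boundary condition, and invoke the uniform ellipticity assumption \eqref{D:cond-1} to estimate
\begin{equation*}
\int_0^T\!\!\int_\Omega D_i(x,t)\,\Phi(u^\eps)\,\nabla u_i^\eps\cdot\nabla T_k(u_i^\eps)\,dxdt
 \;\ge\; \lambda\int_0^T\!\!\int_\Omega \chi_{\{u_i^\eps\le k\}}\,\Phi(u^\eps)\,|\nabla u_i^\eps|^2\,dxdt.
\end{equation*}
For the reaction term, the pointwise bound $0\le T_k(u_i^\eps)\le k$ gives
\begin{equation*}
\int_0^T\!\!\int_\Omega f_i^\eps(u^\eps)\,T_k(u_i^\eps)\,dxdt \;\le\; k\,\|f_i^\eps(u^\eps)\|_{L^1(Q_T)}.
\end{equation*}

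Combining these three estimates and dividing by $\lambda$ yields exactly the stated inequality (identifying $\|u_0\|_{L^1}$ with the initial $L^1$-norm of the $i$-th component, which is uniformly bounded in $\eps$ by construction of $u_{i,0}^\eps$). No genuine obstacle is expected: the only minor subtlety is to justify using $T_k(u_i^\eps)\in L^2(0,T;H^1(\Omega))$ as a test function, which follows from the regularity of the approximate solution $u^\eps$ and a standard chain-rule argument for truncations of Sobolev functions.
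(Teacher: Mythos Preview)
Your proposal is correct and follows essentially the same approach as the paper: testing with the truncation $T_k(u_i^\eps)$, using the chain rule $\partial_t u_i^\eps\,T_k(u_i^\eps)=\partial_t S_k(u_i^\eps)$ together with $0\le S_k(s)\le ks$, integrating the diffusion term by parts and applying \eqref{D:cond-1}, and bounding the reaction term via $|T_k|\le k$. The only cosmetic difference is that the paper defines $T_k$ as a two-sided truncation on $\mathbb{R}$ while you restrict to $s\ge 0$, which is immaterial since $u_i^\eps\ge 0$.
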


\begin{proof}
  Let $T_k(z)$ be defined
  \begin{equation*}
    T_k(z)=\begin{cases}
    -k  & if z\leq-k,\\
    z   & if -k<z<k,\\
    k   & if z\geq k.
    \end{cases}
  \end{equation*}
  Define $S_k(z)=\int^z_0 T_k(\tau) \,d\tau$ and multiply \eqref{appro-system} by $T_k(u^{\eps}_i)$ we obtain
  \begin{equation*}
    \begin{split}
      \intO S_k(u_i^{\eps})\,dx & + \int^T_0\intO D_i(x,t) \Phi(u^{\eps})  \nabla u_i^{\eps} \cdot \nabla(T_k(u^{\eps}_i))\, dxdt\\
      &=\intO S_k(u_0^{\eps})\,dx + \int^T_0\intO f^{\eps}_i(u^{\eps})T_k(u^{\eps}_i)\,dxdt.
    \end{split}
  \end{equation*}
  By applying the properties of $T_k$ and $S_k$, the right hand is bounded by
  \begin{equation*}
    \begin{split}
      \intO S_k(u_0^{\eps})\,dx + \int^T_0\intO f^{\eps}_i(u^{\eps})T_k(u^{\eps}_i)\,dxdt\leq k \left(\|u_0\|_{\LO1}+\|f_i^{\eps}\|_{\LQ1} \right).
    \end{split}
  \end{equation*}
  From \eqref{D:cond-1} and $ \nabla(T_k(u^{\eps}_i))=\chi_{\{|u^{\eps}_i|\leq k\}} \nabla u^{\eps}_i$, it follows that
  \begin{equation*}
    \begin{split}
       \int^T_0\intO D_i(x,t) \Phi(u^{\eps}) \nabla u_i^{\eps} \cdot \nabla(T_k(u^{\eps}_i))\, dxdt &\geq \lambda \int^T_0\intO \chi_{\{|u^{\eps}_i|\leq k\}} \Phi(u^{\eps})  |\nabla u_i^{\eps}|^2 \, dxdt.
    \end{split}
  \end{equation*}
  Finally, by using $S_k(u^{\eps}_i)\geq 0$ we obtain our desired estimate.
\end{proof}

\begin{lemma}\label{lem-nabla-u-2}
  Assume \eqref{D:cond-1}. For any $\beta>0$, there exists constants $C$ such that
  \begin{equation}\label{eq-nabla-u}
    \int^T_0\intO \frac{\Phi(u^{\eps}) |\nabla u^{\eps}_i|^2}{(1+|u^{\eps}_i|)^{1+\beta} } \,dxdt \leq C \left(\|u_0\|_{\LO 1} + \|f^{\eps}_i(u^{\eps})\|_{\LQ 1}\right).
  \end{equation}
\end{lemma}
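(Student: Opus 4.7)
The plan is to mimic the truncation argument used in Lemma \ref{lem-nabla-u}, but replacing the truncation $T_k$ by a bounded smooth test function whose derivative produces the desired weight $(1+u_i^{\eps})^{-1-\beta}$. Concretely, for $\beta>0$ I would introduce
\begin{equation*}
\psi(s) := \frac{1}{\beta}\bigl[1-(1+s)^{-\beta}\bigr], \qquad s\ge 0,
\end{equation*}
which satisfies $0\le \psi(s)\le 1/\beta$ and $\psi'(s)=(1+s)^{-1-\beta}$, together with its antiderivative $\Psi(s):=\int_0^s \psi(\tau)\,d\tau$, which is nonnegative and obeys $0\le \Psi(s)\le s/\beta$.

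Since $u_i^{\eps}\ge 0$, $\psi(u_i^{\eps})$ is an admissible bounded test function. Multiplying the $i$-th equation of \eqref{appro-system} by $\psi(u_i^{\eps})$, integrating over $\Omega\times(0,T)$, using the no-flux boundary condition and the chain rule $\nabla\psi(u_i^{\eps})=\psi'(u_i^{\eps})\nabla u_i^{\eps}=(1+u_i^{\eps})^{-1-\beta}\nabla u_i^{\eps}$ yields
\begin{equation*}
\intO \Psi(u_i^{\eps}(T))\,dx+\int_0^T\!\!\intO D_i(x,t)\Phi(u^{\eps})\frac{|\nabla u_i^{\eps}|^2}{(1+u_i^{\eps})^{1+\beta}}\,dx\,dt=\intO \Psi(u_{i,0}^{\eps})\,dx+\int_0^T\!\!\intO f_i^{\eps}(u^{\eps})\psi(u_i^{\eps})\,dx\,dt.
\end{equation*}
Dropping the nonnegative term $\intO \Psi(u_i^{\eps}(T))\,dx$, applying the ellipticity \eqref{D:cond-1} to the left-hand side, using $\Psi(u_{i,0}^{\eps})\le u_{i,0}^{\eps}/\beta$ and $|\psi|\le 1/\beta$ on the right-hand side produces
\begin{equation*}
\lambda\int_0^T\!\!\intO \Phi(u^{\eps})\frac{|\nabla u_i^{\eps}|^2}{(1+u_i^{\eps})^{1+\beta}}\,dx\,dt\le \frac{1}{\beta}\bigl(\|u_{i,0}^{\eps}\|_{\LO 1}+\|f_i^{\eps}(u^{\eps})\|_{\LQ 1}\bigr),
\end{equation*}
which is \eqref{eq-nabla-u} with $C=1/(\lambda\beta)$ (after absorbing the harmless $\eps$-shift in the initial datum).

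No serious obstacle is expected; the only design choice is that of $\psi$, for which the one above is essentially forced by requiring $\psi'(s)=(1+s)^{-1-\beta}$ together with boundedness of $\psi$. All remaining steps are those already performed in the proof of Lemma \ref{lem-nabla-u}.
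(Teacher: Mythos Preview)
Your proof is correct and takes a genuinely different route from the paper. The paper derives this lemma from Lemma~\ref{lem-nabla-u} via a dyadic decomposition: it partitions $Q_T$ into level sets $\{2^j-1\le u_i^{\eps}\le 2^{j+1}-1\}$, bounds the weight $(1+u_i^{\eps})^{-1-\beta}$ by $2^{-j(1+\beta)}$ on each shell, invokes Lemma~\ref{lem-nabla-u} with $k=2^{j+1}-1$, and sums the resulting geometric series to obtain $C=\frac{2}{\lambda}\sum_{j\ge 0}2^{-\beta j}$. Your approach bypasses Lemma~\ref{lem-nabla-u} entirely by testing directly against the tailor-made function $\psi(s)=\frac{1}{\beta}[1-(1+s)^{-\beta}]$, which produces the weight in one step and even yields the sharper constant $C=1/(\lambda\beta)$. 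The paper's route has the modest virtue of exhibiting Lemma~\ref{lem-nabla-u-2} as a formal consequence of the simpler truncation estimate, while yours is shorter and avoids the layer-cake bookkeeping; both rely on the same regularity of $u_i^{\eps}$ (boundedness of the approximate solution and the chain rule for Lipschitz test functions), so neither requires anything the other does not.
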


\begin{proof}
  Let $M:=\|u_0\|_{\LO 1}+\|f^{\eps}_i(u^{\eps})\|_{\LQ 1}$, we can apply Lemma \ref{lem-nabla-u} to obtain
  \begin{equation*}
    \begin{split}
      \int^T_0\int_{\Omega} \frac{ \Phi(u^{\eps})  |\nabla u^{\eps}_i|^2}{(1+|u^{\eps}_i|)^{1+\beta} } \,dxdt &=\sum^{\infty}_{j=0} \int^T_0\intO \mathbf{1}_{\{2^j-1\leq |u^{\eps}_i|\leq 2^{j+1}-1\}}  \frac{ \Phi(u^{\eps}) |\nabla u^{\eps}_i|^2}{(1+|u^{\eps}_i|)^{1+\beta} } \,dxdt\\
      &\leq \sum^{\infty}_{j=0}2^{-j(1+\beta)} \int^T_0\intO \mathbf{1}_{\{|u^{\eps}_i|\leq 2^{j+1}-1\}}  \Phi(u^{\eps})  |\nabla u^{\eps}_i|^2 \,dxdt\\
      &\leq \frac{M}{\lambda} \sum^{\infty}_{j=0}2^{-j(1+\beta)}  2^{j+1}\\
      &\leq \frac{2M}{\lambda} \sum^{\infty}_{j=0}(2^{-\beta})^{j}.
    \end{split}
  \end{equation*}
  Thus \eqref{eq-nabla-u} holds for $C:=\frac{2}{\lambda} \sum^{\infty}_{j=0}(2^{-\beta})^{j}$, which is finite because of $\beta>0$.
\end{proof}

\begin{proof}[Proof of Theorem \ref{th1}-Global existence]
From subsection \ref{subsec-uni-eps} we have the following bound
 \begin{equation*}
  \|u_i^{\eps}(t)\|_{L^{\infty}(Q_T)} \leq C_{T} \quad \forall i=1, \cdots, m.
\end{equation*}
Due to the polynomial growth $\eqref{A5}$, we can get
\begin{equation}\label{eq-f}
  \|f_i(u_i^{\eps}(t))\|_{L^{\infty}(Q_T)} \leq C_{T} \quad \forall i=1, \cdots, m.
\end{equation}
By multiply \eqref{appro-system} by $u_i^{\eps}$ then integrating on $Q_T$ gives
\begin{equation*}
  \begin{split}
    \frac{1}{2}\|u^{\eps}_i(T)\|^2_{\LO 2}&+\int^T_0\intO D_i(x,t) \Phi(u^{\eps})  \nabla u_i^{\eps} \cdot \nabla u_i^{\eps} \,dxdt\\
    &=\frac{1}{2}\|u^{\eps}_i(0)\|^2_{\LO 2}+\int^T_0\intO f_i^{\eps}(u^{\eps}) u_i^{\eps} \,dxdt\\
    &\leq \frac{1}{2}\|u^{\eps}_i(0)\|^2_{\LO 2}+\|f_i^{\eps}(u^{\eps}) \|_{\LQ {\infty}}\|u_i^{\eps}\|_{\LQ 1}\\
    &\leq C\left(T, |\Omega|, \|u_{i,0}\|_{\LO{\infty}}\right).
  \end{split}
\end{equation*}
By \eqref{D:cond-1}, \eqref{Q2}  and H\"{o}lder inequality give
\begin{equation*}
  \begin{split}
    \int^T_0&\intO D_i(x,t) \Phi(u^{\eps})  \nabla u_i^{\eps} \cdot \nabla u_i^{\eps} \,dxdt \\
    &\geq \lambda M_i \int^T_0\intO  (u_i^{\eps})^b |\nabla u_i^{\eps}|^2\,dxdt \\
    &=\lambda M_i (\frac{2}{b+2})^2 \int^T_0\intO  |\nabla ((u_i^{\eps})^{\frac{b+2}{2}})|^2\,dxdt\\
    &=\lambda  M_i (\frac{2}{b+2})^2 \|\nabla (( u_i^{\eps})^{\frac{b+2}{2}})\|_{\LQ 2}^2.
  \end{split}
\end{equation*}
In particular
\begin{equation}\label{eq-p(1)-es}
  \begin{split}
    \|\nabla ((u_i^{\eps})^{\frac{b+2}{2}})\|_{\LQ 2}\leq C\left(T, |\Omega|, \|u_{i,0}\|_{\LO{\infty}}\right).
  \end{split}
\end{equation}

By testing the equation \eqref{appro-system} with $\varphi\in L^2(0,T; H^1(\Omega))$, 
we have
\begin{equation*}
\begin{split}
  &\int^T_0\intO\pa_tu_i^{\eps} \varphi\,dxdt=-\int^T_0\intO D_i(x,t) \Phi(u^{\eps}) \nabla u_i^{\eps} \cdot\nabla \varphi \,dxdt+\int^T_0\intO f_i^{\eps}(u^{\eps}) \varphi \,dxdt\\
  &=-\int^T_0\intO D_i(x,t) \Phi(u^{\eps})^{\frac{1}{2}} (1+u_i^{\eps})^{\frac{1+\beta}{2}} \frac{ \Phi(u^{\eps})^{\frac{1}{2}}\nabla u_i^{\eps}}{(1+u_i^{\eps})^{\frac{1+\beta}{2}}} \cdot\nabla \varphi \,dxdt +\int^T_0\intO f_i^{\eps}(u^{\eps}) \varphi \,dxdt\\
  &\leq \|D_i(x,t)\|_{\LQ{\infty}} \|\Phi(u^{\eps})\|^{\frac{1}{2}}_{\LQ{\infty}} (1+\|u_i^{\eps}\|_{\LQ{\infty}})^{\frac{1+\beta}{2}} \|\frac{\Phi(u^{\eps})^{\frac{1}{2}}|\nabla u_i^{\eps}|}{(1+u_i^{\eps})^{\frac{1+\beta}{2}}}\|_{\LQ2} \|\nabla\varphi\|_{\LQ2}\\
  & \quad +\|f_i^{\eps}(u^{\eps})\|_{\LQ{\infty}} (|\Omega| T)^{\frac 1 2}\|\varphi\|_{\LQ2}\\
  &\leq C\left(T, |\Omega|,\|u_{i,0}\|_{\LO{\infty}}, \|D_i(x,t)\|_{\LQ{\infty}}\right) \|\varphi\|_{L^2(0,T; H^1(\Omega))},
\end{split}
\end{equation*}
where we use \eqref{D:cond-2}, \eqref{Q3}, \eqref{eq-nabla-u} and \eqref{eq-f}.
Thus we can get
\begin{equation}\label{eq-pa-t}
\|\pa_t u_i^{\eps}\|_{L^2(0,T; (H^1(\Omega))')}\leq C\left(T, |\Omega|, \|u_{i,0}\|_{\LO{\infty}}\right).
\end{equation}

From Lemma \ref{lem-L-infty-bound}, \eqref{eq-p(1)-es} and \eqref{eq-pa-t}, we can  apply a nonlinear version of the well known Aubin-Lions Lemma (see e.g. \cite[Theorem 1.1]{Moussa2016Some}) to ensure
\begin{equation}\label{eq-strong-conv}
  u_i^{\eps}\stackrel{\eps\rightarrow 0}{\longrightarrow}u_i \quad \text{strongly in } \LQ2
\end{equation}
Thanks to the $L^{\infty}$ bound of Lemma \ref{lem-L-infty-bound}, this convergence in fact holds in $\LQ p$ for any $1\leq p<\infty$.

Since the sequence $(u_i^\eps)^{\frac{b+2}{2}}$ is uniformly bounded in $L^2(0,T; W^{1,2}(\Omega))$, there exists a subsequence (still denoted by $(u_i^\eps)$) and $v\in \LQ 2$ such that 
\begin{align*}
	\nabla (u_i^\eps)^{\frac{b+2}{2}} \rightharpoonup v \quad \text{weekly in } \LQ 2.
\end{align*}
Now, we are ready to identify the weak limit $v$ by the following computation: for testing functions $\varphi \in C^{\infty}_0(Q_T)$ we have 
\begin{align*}
	\intQT v \cdot \varphi \,dxdt &= \lim_{\eps\rightarrow 0} \intQT \nabla (u_i^\eps)^{\frac{b+2}{2}} \cdot \varphi \,dxdt\\
	& = -  \lim_{\eps\rightarrow 0} \intQT  (u_i^\eps)^{\frac{b+2}{2}} \cdot \nabla \varphi \,dxdt\\
	& = -  \intQT  u_i^{\frac{b+2}{2}} \cdot \nabla \varphi \,dxdt.
\end{align*}
This implies that the function $u_i^{\frac{b+2}{2}}$ is weakly differentiable with weak derivative $v$, i.e., $\nabla u_i^{\frac{b+2}{2}} = v$.
Thus
\begin{align}\label{eq-weak-convergence-nabla}
	\nabla (u_i^\eps)^{\frac{b+2}{2}} \rightharpoonup \nabla u_i^{\frac{b+2}{2}} \quad \text{weekly in } \LQ 2.
\end{align}

\medskip
It remains to pass  to the limit $\eps\rightarrow 0$ in the weak formulation of \eqref{appro-system} with $\eta\in L^2(0,T; H^1(\Omega))$,
\begin{equation}\label{eq-ori-limit}
\begin{split}
\int^T_0\langle \pa_t u_i^{\eps}, \eta\rangle_{(H^1(\Omega))', H^1(\Omega)} \,dt &+\int^T_0\intO D_i(x,t) \Phi(u^{\eps})  \nabla u_i^{\eps}\cdot \nabla \eta \,dxdt\\
&=\int^T_0\intO f_i^{\eps}(u^{\eps}) \eta \,dxdt.
\end{split}
\end{equation}
The convergence of the first term on the left hand side and the term on the right hand side of \eqref{eq-ori-limit} is immediate.

The convergence
\begin{equation}\label{eq-conv-term2}
\begin{split}
\int^T_0\intO D_i(x,t)  \Phi(u^{\eps})  \nabla u_i^{\eps}\cdot \nabla \eta \,dxdt \stackrel{\eps\rightarrow 0}{\longrightarrow} \int^T_0\intO D_i(x,t) \Phi(u) \nabla u_i\cdot \nabla \eta \,dxdt \quad \text{ in } Q_T
\end{split}
\end{equation}
follows from $\frac{\Phi(u^{\eps})}{(u^{\eps}_i)^{\frac{b}{2}}}   \stackrel{\eps\rightarrow 0}{\longrightarrow} \frac{\Phi(u)}{u_i^{\frac{b}{2}} } $ a.e. in $Q_T$, Lemma \ref{lem-L-infty-bound} and the equation \eqref{eq-weak-convergence-nabla}.

Passing to the limit in the weak formulation \eqref{eq-ori-limit}, we have
\begin{equation}\label{eq-limit}
\begin{split}
\int^T_0 \intO \pa_t u_i \eta \,dxdt  &+\int^T_0\intO D_i(x,t) \Phi(u) \nabla u_i\cdot \nabla \eta \,dxdt=\int^T_0\intO f_i(u) \eta \,dxdt.
\end{split}
\end{equation}
We obtain that $u=\left(u_1,\cdots,u_m\right)$ is a global weak solution to \eqref{system} and additionally
\begin{equation*}
  \|u_i\|_{\LQ{\infty}}\leq C_T, \quad \forall i=1,\cdots,m.
\end{equation*}
\end{proof}

\section{Uniform-in-time boundness}\label{sub-uni}

\begin{lemma}\label{L1-uni-bound}
Assume \eqref{A1}, \eqref{A2} and \eqref{A3} with either $K_1<0$ or $K_1=K_2=0$. Then, there exists a constant M independent of time such that
\begin{equation}\label{eq-uni-l1}
  \sup_{t\geq 0}\|u_i(t)\|_{\LO1}\leq M:=\|u_{i0}\|_{\LO1}, \quad \forall i=1,\cdots,m.
\end{equation}
\end{lemma}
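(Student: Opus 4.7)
The plan is to run the standard mass control argument on the weighted linear combination $Y(t):=\sum_{i=1}^m c_i\int_\Omega u_i(x,t)\,dx$, exactly as in the proof of Lemma \ref{L1-bound}, and then exploit the sign condition on $K_1$ (or the vanishing of both $K_1$ and $K_2$) to close a dissipative differential inequality whose solutions are uniformly bounded in $t$.

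First I would test each equation of \eqref{system} against the constant function $1$, multiply by $c_i$, and sum in $i$. The no-flux boundary condition kills the diffusion term after integration by parts, and assumption \eqref{A3} gives
\begin{equation*}
	Y'(t)=\sum_{i=1}^m c_i\int_\Omega f_i(x,t,u)\,dx\leq K_1\int_\Omega\sum_{i=1}^m u_i\,dx+K_2|\Omega|.
\end{equation*}
Using the nonnegativity of $u_i$ (a consequence of \eqref{A2}), one has $c_{\min}\sum_i\int_\Omega u_i\leq Y(t)\leq c_{\max}\sum_i\int_\Omega u_i$ with $c_{\min}:=\min_i c_i>0$, so the inequality becomes an autonomous ODE inequality for $Y$.

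In the case $K_1=K_2=0$, the inequality reads $Y'(t)\leq 0$, so $Y(t)\leq Y(0)$ for all $t\geq 0$, and this yields $\|u_i(t)\|_{L^1(\Omega)}\leq Y(0)/c_{\min}$ uniformly in $t$. In the case $K_1<0$, one bounds $K_1\sum_i\int_\Omega u_i\leq (K_1/c_{\max})Y(t)$ (the sign flip being essential here, since $K_1<0$), so
\begin{equation*}
	Y'(t)\leq \frac{K_1}{c_{\max}}Y(t)+K_2|\Omega|,
\end{equation*}
and an application of the classical Gronwall inequality yields $Y(t)\leq Y(0)+\frac{K_2|\Omega|c_{\max}}{|K_1|}$ for all $t\geq 0$. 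Dividing by $c_{\min}$ gives the uniform-in-time $L^1$ bound on each $u_i$.

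There is no real obstacle here, the argument being a direct rerun of Lemma \ref{L1-bound} with a sharper use of the sign of $K_1$ in Gronwall's inequality. The only mild subtlety is to translate between the weighted sum $Y(t)$ and the individual $L^1$ norms via $c_{\min}$ and $c_{\max}$, which requires only positivity of the $c_i$ from \eqref{A3} together with the nonnegativity of the solution ensured by \eqref{A2}.
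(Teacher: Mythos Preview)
Your argument is correct and follows the same opening move as the paper: sum the equations against the constant test function, use \eqref{A3}, and obtain a differential inequality for the weighted mass $Y(t)=\sum_i c_i\int_\Omega u_i$. The case $K_1=K_2=0$ is handled identically.

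For $K_1<0$ you take a cleaner route than the paper. You pass directly from $Y'(t)\le K_1\sum_i\int_\Omega u_i + K_2|\Omega|$ to the autonomous inequality $Y'(t)\le (K_1/c_{\max})Y(t)+K_2|\Omega|$ via the comparison $\sum_i\int_\Omega u_i\ge Y(t)/c_{\max}$ and the sign of $K_1$, then apply Gronwall. The paper instead integrates over $(s,t)$, introduces an auxiliary function $\phi(s)=\int_s^t Y(r)\,dr$, derives an ODE for $e^{\sigma s}\phi(s)$, and integrates again. Both reach a bound of the form $Y(t)\le C(Y(0),K_2,|\Omega|,K_1)$ uniform in $t$; your version is shorter and more transparent. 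The paper's detour through the integrated inequality would matter if one were worried about pointwise differentiability of $t\mapsto Y(t)$ for weak solutions, but since the same differential form is already used without comment in Lemma~\ref{L1-bound}, that concern is not being enforced here either.
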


\begin{proof}
  Similar to the proof of Lemma \ref{L1-bound}, we have
  \begin{equation*}
    \frac{d}{dt}\sumi\intO c_iu_i \,dx \leq \intO \left(K_1 \sumi u_i^{\eps}(x,t) + K_2\right) \,dx.
  \end{equation*}
  Integrating over $(s,t)$ we have
   \begin{equation*}
    \sumi c_i\intO u_i(x,t) \,dx \leq \sumi c_i\intO u_i(x,s) \,dx +\int^t_s\intO K_1 \sumi u_i(x,r) \,dx dr+ K_2 |\Omega|(t-s)
  \end{equation*}
  for all $t>s\geq0$.

  If $K_1=K_2=0$, the bound \eqref{eq-uni-l1} follows immediately.

  If $K_1<0$, we get for some constant $\sigma>0$ and for all $t>s\geq0$,
  \begin{equation}\label{eq-mod}
  \begin{split}
    \sumi c_i\intO u_i(x,t) \,dx &+ \sigma \int^t_s \left(\sumi c_i\intO  u_i(x,r) \,dx\right) dr \\
    &\leq \sumi c_i\intO u_i(x,s) \,dx+ K_2 |\Omega|(t-s).
  \end{split}
  \end{equation}
  Define
  \begin{equation*}
    \psi(t)= \sumi c_i\intO u_i(x,t) \,dx ~\text{ and } ~\phi(s) =  \int^t_s \left(\sumi c_i\intO  u_i(x,r) \,dx\right) dr.
  \end{equation*}
  It follows from \eqref{eq-mod} that
\begin{equation*}
  \phi^{\prime}(s)=-\sum_{i=1}^m c_i \int_{\Omega} u_i(\cdot, s) d x \leq-\psi(t)-\sigma \phi(s)+K_2|\Omega|(t-s),
\end{equation*}
which leads to
\begin{equation*}
\left(e^{\sigma s} \phi(s)\right)^{\prime}+e^{\sigma s} \psi(t) \leq K_2|\Omega| e^{\sigma s}(t-s) .
\end{equation*}

Integrating with respect to $s$ on $(0, t)$, and using $\phi(t)=0$, we have
\begin{equation*}
-\phi(0)+\psi(t) \frac{e^{\sigma t}-1}{\sigma} \leq \frac{K_2|\Omega|}{\sigma}\left(-t+\frac{e^{\sigma t}-1}{\sigma}\right) .
\end{equation*}
Since $\sigma \phi(0)-K_2|\Omega| t \leq \sum_{i=1}^m c_i \int_{\Omega}u_{i 0}(x)\,d x$ (see \eqref{eq-mod}) it follows that
\begin{equation*}
\psi(t) \leq\left(e^{\sigma t}-1\right)^{-1} \sum_{i=1}^m c_i \int_{\Omega} u_{i, 0}(x) d x+K_2|\Omega| \sigma^{-1},
\end{equation*}
which finishes the proof of  Lemma \ref{L1-uni-bound}.
\end{proof}

\begin{proof}[Proof of Theorem \ref{th1}-uniform-in-time boundness]
We first show that for any $1\leq p<\infty$, there exists a constant $C_p>0$ such that
\begin{equation}\label{eq-lp-uni}
    \sup_{t\geq 0}\|u_i(t)\|_{\LO p}\leq C_p, \quad \forall i=1,\cdots,m.
\end{equation}
Indeed, by using the $L^p$-energy function $\mathscr{L}_p[u]$ defined in \eqref{def-Lp-energy-fun} and similar to Lemma \ref{lem-L-p-bound}, we obtain
\begin{equation*}
  \begin{split}
    \mathscr{L}_p^{\prime}(t)+&\alpha_p \sum_{k=1}^m \int_{\Omega}u_k(x,t)^{b+p-2} \left|\nabla u_k(x, t)\right|^2 \,dx \leq C\left(\intO\sumi u_i(x,t)^{p-1+r}\,dx+1\right).
  \end{split}
\end{equation*}
Combining Lemmas \ref{lem-func-ineq} and \ref{L1-uni-bound}, we get
\begin{equation*}
  \begin{split}
    \mathscr{L}_p^{\prime}(t)+&\frac{\alpha_p}{2} \sum_{k=1}^m \int_{\Omega}\left(u_k(x,t)^{b+p-2} \left|\nabla u_k(x, t)\right|^2 +u_k(x,t)^{b+p}\right)\,dx \leq C(p).
  \end{split}
\end{equation*}
This implies
\begin{equation*}
  \begin{split}
    \mathscr{L}_p^{\prime}(t) + \sigma\mathscr{L}_p(t) \leq C(p).
  \end{split}
\end{equation*}
Thus, we have
\begin{equation*}
\sup _{t\geq 0}\|u_i(t)\|_{L^p(\Omega)} \leq C_{p} \quad \forall i=1, \ldots, m.
\end{equation*}

Next we will show that the solution is bounded uniformly in time in sup norm. We use a smooth time-truncated function $\psi: \R\rightarrow [0,1]$ with
\begin{equation*}
  \psi(s)=
  \begin{cases}
    0, &s\leq0\\
    1, &s\geq 1
  \end{cases}
\end{equation*}
and $0\leq \phi'\leq C$, and its shifted version $\psi_{\tau}(\cdot)=\psi(\cdot,-\tau)$ for any $\tau\in\mathbb{N}$.  Let $\tau\in\mathbb{N}$ be arbitrary. It is straightforward to show that since $u=(u_i)_{i=1,\cdots,m}$ is a weak solution to \eqref{system}, the function $\psi_{\tau}u=(\psi_{\tau}u_i)_{i=1,\cdots,m}$ is a weak solution to the following
\begin{equation*}
  \begin{cases}
  \pa_t (\psi_{\tau}u_i)-\nabla\cdot(D_i(x,t) \Phi(\psi_{\tau}u) \nabla (\psi_{\tau}u_i))=\psi'_{\tau}u_i+\psi_{\tau}f_i(u),  &x\in \Omega, t\in(\tau,\tau+2)\\
  (D_i(x,t) \Phi(\psi_{\tau}u) \nabla (\psi_{\tau}u_i))\cdot \nu=0,   &x\in \pa \Omega, t\in(\tau,\tau+2)\\
  (\psi_{\tau}u_i)(x,\tau)=0,   &x\in \Omega.
  \end{cases}
\end{equation*}

Thanks to \eqref{eq-lp-uni} and the polynomial growth \eqref{A5}, we have
\begin{equation*}
  \psi_{\tau}' u_i+\psi_{\tau}f_i(u)\leq G_i(x,t,u):=C(1+\sum^m_{k=1} u_k^l)\in L^p(\Omega\times(\tau,\tau+2)),
\end{equation*}
where $1\leq p<\infty$.
Therefore, similar to the proof of Lemma \ref{lem-L-infty-bound}, we get
\begin{equation*}
  \|\psi_{\tau}u_i\|_{L^{\infty}(\Omega\times(\tau,\tau+2)}\leq C, \quad \forall i=1,\cdots,m,
\end{equation*}
where C is a constant independent of $\tau\in \mathbb{N}$. Thanks to $\psi_{\tau}\geq 0$ and $\psi|_{(\tau+1,\tau+2)}\equiv1$, we obtain finally the uniform-in-time bound
\begin{equation*}
\sup_{\tau\in \mathbb{N}}\|u_i\|_{L^\infty(\Omega\times(\tau,\tau+1))}\leq C,\quad \forall i=1,\cdots,m,
\end{equation*}
and the proof of Theorem \ref{th1} is complete.
\end{proof}

\section{Proof of Theorems \ref{th2}--\ref{th5}}\label{sub-proof}

\begin{proof}[Proof of Theorem \ref{th2}]
For the global existence, we only need to show that for any $1\leq p<\infty$ and any $T>0$, there exists $C_{T,p}>0$ such that
\begin{equation}\label{p-1}
\sup _{t \in(0, T)}\|u_i^{\eps}(t)\|_{L^p(\Omega)} \leq C_{T,p}, \quad \forall i=1,\cdots,m.
\end{equation}
The rest follows exactly as in Lemma \ref{lem-L-infty-bound}. To show \eqref{p-1}, we use the $L^p$-energy functions $\mathscr{L}_p(t)$ constructed in Lemma \ref{lem-L-p-bound} until \eqref{p} we end up with
\begin{equation}\label{e-p-1}
  \begin{split}
    \mathscr{L}_p^{\prime}(t)+&\alpha_p \sum_{i=1}^m \int_{\Omega}\left(u_i^{\eps}(x,t)^{b+p-2} \left|\nabla u^{\eps}_i(x, t)\right|^2 +u_i^{\eps}(x,t)^{b+p}\right)\,dx\\
    & \leq C\left(\sumi\intO \left(u_i^{\eps}(x,t)^{p-1+r} + u_i^{\eps}(x,t)^{b+p}\right) \,dx+1\right).
  \end{split}
\end{equation}
Since \eqref{condi-u-1} and \eqref{condi-growth1}, we apply Lemma \ref{lem-func-ineq} to estimate
\begin{equation*}
  \begin{split}
    &\intO\left(u_i^{\eps}(x,t)^{p-1+r} + u_i^{\eps}(x,t)^{b+p}\right) \,dx\\
    &\leq  \frac{\alpha_p}{2}(\int_{\Omega} u_i^{\eps}(x,t)^{p-2+b}|\nabla u_i^{\eps}(x,t)|^{2}\,dx+\int_{\Omega} u_i^{\eps}(x,t)^{b+p}\,dx)+C_{T},
  \end{split}
\end{equation*}
where $C_{T}$ depends on $\mathscr{F}(T)$. Inserting this into \eqref{e-p-1} yields
\begin{equation*}
  \begin{split}
    \mathscr{L}_p^{\prime}(t)+\sigma \mathscr{L}_p(t) \leq C(T,p),
  \end{split}
\end{equation*}
which implies \eqref{p-1}.

\red{For the uniform-in-time bounds, we use an argument similar to the proof of Theorem \ref{th1}.}
\end{proof}

\begin{proof}[Proof of Theorem \ref{th3}]
For the global existence, we only need to show that for any $1\leq p<\infty$ and any $T>0$, there exists $C_{T,p}>0$ such that
\begin{equation}\label{p-2}
\sup _{t \in(0, T)}\|u_i^{\eps}(t)\|_{L^p(\Omega)} \leq C_{T,p}, \quad \forall i=1,\cdots,m.
\end{equation}
The rest follows exactly as in Lemma \ref{lem-L-infty-bound}. To show \eqref{p-2}, we use the $L^p$-energy functions $\mathscr{L}_p(t)$ constructed in Lemma \ref{lem-L-p-bound} until \eqref{p}  we obtain
\begin{equation}\label{e-p-2}
  \begin{split}
    \mathscr{L}_p^{\prime}(t)+&\frac{\alpha_p}{2} \sum_{i=1}^m \int_{\Omega}\left(u_i^{\eps}(x,t)^{b+p-2} \left|\nabla u^{\eps}_i(x, t)\right|^2 +u_i^{\eps}(x,t)^{b+p}\right)\,dx\\
    & \leq C\left(\sumi\intO u_i^{\eps}(x,t)^{p-1+r} \,dx+1\right).
  \end{split}
\end{equation}
We integrate \eqref{e-p-2} in time to obtain
\begin{equation}\label{eq-t3-1}
  \begin{split}
    \sup_{t\in(0,T)}\mathscr{L}_p(t)+&\frac{\alpha_p}{2} \sum_{i=1}^m \int^T_0\int_{\Omega}\left(u_i^{\eps}(x,t)^{b+p-2} \left|\nabla u^{\eps}_i(x, t)\right|^2 +u_i^{\eps}(x,t)^{b+p}\right)\,dxdt\\
    & \leq C\sumi\int^T_0\intO u_i^{\eps}(x,t)^{p-1+r}  \,dxdt+C_{p,T}+\mathscr{L}_p(0).
  \end{split}
\end{equation}
 Denote by $y_i:=u_i^{\eps}(x,t)^{\frac{b+p}{2}}$. The left-hand side (LHS) of \eqref{eq-t3-1} can be estimated below by
\begin{equation}
  \begin{split}
    C\sumi \left( \|y_i\|^{\frac{2p}{b+p}}_{L^{\infty}(0,T; L^{\frac{2p}{b+p}}(\Omega))}+\|y_i\|^2_{L^2(0,T; H^1(\Omega))} \right)\leq \text{(LHS) of } \eqref{eq-t3-1}.
  \end{split}\label{eq-t3-2}
\end{equation}
For the right-hand side of \eqref{eq-t3-1}, we first consider
\begin{equation}\label{eq-t3-3}
  \begin{split}
   \vt>p-1+r
  \end{split}
\end{equation}
as a constant to be determined later. Of course we are only interested in the case when $p-1+r>q$, otherwise the right hand side of \eqref{eq-t3-1} is bounded thanks to \eqref{condi-u-2}. By the interpolation inequality we have
	\begin{equation}\label{eq-t3-4}
    \begin{split}
		\intQT u^{\eps}_i(x,t)^{p-1+r}dxdt &= \|u^{\eps}_i(x,t)\|_{\LQ{p-1+r}}^{p-1+r}\\
        &\leq \|u^{\eps}_i(x,t)\|_{\LQ{q}}^{\theta(p-1+r)}\|u^{\eps}_i(x,t)\|_{\LQ{\vt}}^{(1-\theta)(p-1+r)},
     \end{split}
	\end{equation}
	where $\theta\in (0,1)$ satisfies
	\begin{equation*}
		\frac{1}{p-1+r} = \frac{\theta}{q} + \frac{1-\theta}{\vt},
	\end{equation*}
	which implies
	\begin{equation*}
		(1-\theta)(p-1+r) = \frac{\vt(p-1+r-q)}{\vt - q}.
	\end{equation*}
	Using this, and taking into account \eqref{condi-u-2}, \eqref{eq-t3-4} implies
	\begin{equation}\label{eq-t3-5}
	\begin{aligned}
		\intQT u^{\eps}_i(x,t)^{p-1+r}dxdt &\leq \mathscr{F}(T)^{\theta(p-1+r)}\|u^{\eps}_i(x,t)\|_{\LQ{\vt}}^{\frac{\vt(p-1+r-q)}{\vt - q}}\\
		&= \mathscr{F}(T)^{\theta(p-1+r)}\bra{\intQT u^{\eps}_i(x,t)^{\vt}dxdt}^{\frac{p-1+r-q}{\vt -q}}\\
		&= C_T\bra{\intQT y_i^{\frac{2\vt}{b+p}}dxdt}^{\frac{p-1+r-q}{\vt - q}}.
	\end{aligned}
	\end{equation}
	For $p$ large enough, we choose $\vt$ close enough to (but bigger than) $p-1+r$ so that $H^1(\Omega)\hookrightarrow \LO{\frac{2\vt}{b+p}}$. That means $\vt$ is arbitrary for $N\leq 2$ and
	\begin{equation}\label{eq-t3-6}
		\frac{2\vt}{b+p} \leq \frac{2N}{N-2} \Leftrightarrow \vt \leq \frac{(b+p) N}{N-2} \quad\text{ for }\quad N\geq 3.
	\end{equation}
	Thus, we can use the Gagliardo-Nirenberg's inequality to estimate
	\begin{equation}\label{eq-t3-7}
		\intO y_i^{\frac{2\vt}{b+p}}dx = \|y_i\|_{\LO{\frac{2\vt}{b+p}}}^{\frac{2\vt}{b+p}} \leq C\|y_i\|_{H^1(\Omega)}^{\alpha\cdot \frac{2\vt}{b+p}}\|y_i\|_{\LO{\frac{2p}{b+p}}}^{(1-\alpha)\cdot \frac{2\vt}{b+p}}
	\end{equation}
	where $\alpha\in (0,1)$ satisfies
	\begin{equation*}
		\frac{b+p}{2\vt} = \bra{\frac 12 - \frac 1N}\alpha + \frac{(1-\alpha)(b+p)}{2p}.
	\end{equation*}
	From this
	\begin{equation*}
		\alpha\cdot\frac{2\vt}{b+p} = \frac{2N(\vt-p)}{Nb+2p} \quad \text{ and }
     \quad (1-\alpha)\cdot\frac{2\vt}{b+p} = \frac{2Np(b+p) - 2(N-2)p\vt}{(b+p)(Nb+2p)}.
	\end{equation*}
	Therefore, we obtain from \eqref{eq-t3-7} that
	\begin{equation*}
		\intO y_i^{\frac{2\vt}{b+p}}dx \leq C\|y_i\|_{H^1(\Omega)}^{\frac{2N(\vt-p)}{Nb+2p}}
        \|y_i\|_{\LO{\frac{2p}{b+p}}}^\frac{2Np(b+p) - 2(N-2)p\vt}{(b+p)(Nb+2p)}.
	\end{equation*}
	It follows that
	\begin{align*}
		\intQT y_i^{\frac{2\vt}{b+p}}dxdt &\leq C\int_0^T\|y_i\|_{H^1(\Omega)}^{\frac{2N(\vt-p)}{Nb+2p}}
       \|y_i\|_{\LO{\frac{2p}{b+p}}}^{\frac{2Np(b+p) - 2(N-2)p\vt}{(b+p)(Nb+2p)}} \,dt\\
		&\leq C\|y_i\|_{L^{\infty}(0,T;\LO{\frac{2p}{b+p}})}^{\frac{2Np(b+p) - 2(N-2)p\vt}{(b+p)(Nb+2p)}} \int_0^T\|y_i\|_{H^1(\Omega)}^{\frac{2N(\vt-p)}{Nb+2p}} \,dt.
	\end{align*}
	We choose
	\begin{equation}\label{eq-t3-8}
	\frac{2N(\vt-p)}{Nb+2p} \leq 2 \Leftrightarrow \vt \leq \frac{Np+Nb+2p}{N}=b+p+\frac{2p}{N},
	\end{equation}
	which is possible since $p - 1 + r < p + b+ \frac{2p}{N}$ for $p$ large enough. Thus, by H\"older's inequality,
	\begin{equation*}
		\intQT y_i^{\frac{2\vt}{b+p}}dxdt \leq C_T\|y_i\|_{L^{\infty}(0,T;\LO{\frac{2p}{b+p}})}^{\frac{2Np(b+p) - 2(N-2)p\vt}{(b+p)(Nb+2p)}}
          \|y_i\|_{L^2(0,T;H^1(\Omega))}^{\frac{2N(\vt-p)}{Nb+2p}}.
	\end{equation*}
	Inserting this into \eqref{eq-t3-5} yields
	\begin{equation}\label{eq-t3-9}
	\begin{split}
	\intQT &u_i^{\eps}(x,t)^{p-1+r}dxdt \leq C_T\bra{\|y_i\|_{L^{\infty}(0,T;\LO{\frac{2p}{b+p}})}^{\frac{2Np(b+p) - 2(N-2)p\vt}{(b+p)(Nb+2p)}}
          \|y_i\|_{L^2(0,T;H^1(\Omega))}^{\frac{2N(\vt-p)}{Nb+2p}}}^{\frac{p-1+r-q}{\vt - q}}
	\end{split}
    \end{equation}
where $\theta\in(0,1).$

Using Young's inequality of the form
	\begin{equation*}
		x^{\lambda_1}y^{\lambda_2} \leq \eps(x^{\frac{2p}{b+p}}+y^2) + C_{\eps} \quad \text{ for } \quad \lambda_1\frac{b+p}{2p}+\frac{\lambda_2}{2} < 1,
	\end{equation*}
	we can choose $\vt$ such that
    \begin{equation}\label{eq-t3-11}
     \begin{split}
       \frac{N(b+p) - (N-2)\vt}{(Nb+2p)}\frac{p-1+r-q}{\vt - q} +
       \frac{N(p-1+r-q)(\vt - p)}{(Nb+2p)(\vt - q)}<1.
     \end{split}
	\end{equation}
This is equivalent to
\begin{align}\label{eq-t3-12}
		 \vt > \frac{Nb(p-1+r)+2pq}{Nb+2(1+q-r)}.
\end{align}
	We now check that we can choose $\vt$ which satisfies all the conditions \eqref{eq-t3-3}, \eqref{eq-t3-6}, \eqref{eq-t3-8} and \eqref{eq-t3-12}. This is fulfilled provided
\begin{equation*}
		\frac{Nb(p-1+r)+2pq}{Nb+2(1+q-r)}<b+p+\frac{2p}{N}, \quad\text{which is equivalent to}\quad r < 1 + \frac{N}{N+2}b +  \frac{2q}{N+2},
	\end{equation*}
	and this can be obtain from our assumption \eqref{condi-growth2}.
	We estimate \eqref{eq-t3-9} further as
	\begin{equation*}
		\intQT u_i^{\eps}(x,t)^{p-1+r}dxdt \leq \eps\bra{\|y_i\|_{L^{\infty}(0,T;\LO{\frac{2p}{b+p}})}^{\frac{2p}{b+p}} + \|y_i\|_{L^2(0,T;H^1(\Omega))}^2} + C_{T,\eps}.
	\end{equation*}
	Thus
	\begin{equation*}
		\text{RHS of (\ref{eq-t3-1})} \leq \mathscr{L}_p(0) + \eps\sumi \bra{\|y_i\|_{L^{\infty}(0,T;\LO{\frac{2p}{b+p}})}^{\frac{2p}{b+p}} + \|y_i\|^2_{L^2(0,T;H^1(\Omega))}} + C_{T,\eps}.
	\end{equation*}
	Combining this with \eqref{eq-t3-2} gives us the desired estimate \eqref{p-2}.

\red{For the uniform-in-time bounds, We use an argument similar to the proof of Theorem \ref{th1}.}
\end{proof}

\begin{proof}[Proof of Theorem \ref{th4}] We give a formal proof as its rigor can be easily obtained through approximation. Since $h_k(\cdot)$ is convex, we have
	\begin{align*}
		\nabla\cdot(D_i\Phi(u)\nabla(h_i(u_i))) &= \nabla\cdot(D_i\Phi(u)h_i^{\prime}(u_i)\nabla u_i)\\
		&\geq h'(u_i)\nabla\cdot(D_i\Phi(u)\nabla u_i) + \lambda h_i''(u_i)\Phi(u)|\nabla u_i|^2\\
		&\geq h'(u_i)\nabla \cdot(D_i\Phi(u)\nabla u_i).
	\end{align*}
	Therefore, by defining $v_i:= h_i(u_i)\geq 0$ we have
	\begin{equation}\label{ee2}
		\pa_t v_i - \nabla\cdot(D_i\Phi(u)\nabla v_i) \leq G_i(x,t,u):= h_i^{\prime}(u_i)f_i(x,t,u), \quad x\in\Omega, \; t>0,
	\end{equation}
	with initial data $v_i(x,0) = h_i(u_{i,0}(x))$ and homogeneous Dirichlet boundary condition $v_i(x,t) = 0$ for $x\in\pa\Omega$ and $t>0$. Thanks to \eqref{H1} and  we have
	\begin{equation*}
		\sumi G_i(x,t,u) \leq K_5\sumi v_i + K_6
	\end{equation*}
	and
	\begin{equation*}
		A\begin{pmatrix}G_i(x,t,u)\\ \cdots\\ G_m(x,t,u)\end{pmatrix} \leq K_7\overrightarrow{1}\bra{\sumi v_i + 1}^r.
	\end{equation*}
	We can now reapply the methods in the proof of Theorem \ref{th1} to obtain $v_i \in L^\infty_{\text{loc}}(0,\infty;\LO{\infty})$, and in case $K_5<0$ or $K_5 = K_6 = 0$ in \eqref{H1},
	\begin{equation*}
		\text{ess}\sup_{t\geq 0}\|v_i(t)\|_{\LO{\infty}} <+\infty, \quad \forall i=1,\ldots, m.
	\end{equation*}
	Due to the assumption \eqref{H1}, the global existence and boundedness of \eqref{system} immediately hold.
\end{proof}

\begin{definition}\label{def_diff_bc}
	A vector of non-negative concentrations $u = (u_1, \ldots, u_m)$ is called a weak solution to \eqref{Sys_other_bc} on $(0,T)$ if
	\begin{equation*}
		u_i\in C([0,T]; \LO{2}), ~~ \Phi(u)\nabla u_i\in L^2(0,T;L^2(\Omega)), \quad f_i(u)\in L^2(0,T;\LO{2}),
	\end{equation*}
	with $u_i(\cdot,0)= u_{i,0}(\cdot)$ for all $i=1,\ldots, m$, and for any test function $\varphi \in L^2(0,T;H^1(\Omega))$ with $\pa_t\varphi \in L^2(0,T;H^{-1}(\Omega))$, it holds that
	\begin{equation}\label{weak_bc}
	\begin{aligned}
		\intO u_i(x,t)\varphi(x,t)dx\bigg|_{t=0}^{t=T} - \intQT u_i\pa_t\varphi  \,dxdt + \intQT D_i(x,t)\Phi(u)\nabla u_i \cdot \nabla \varphi \,dxdt\\
		+\alpha_i\int_0^T\int_{\pa\Omega}u_i\varphi d\mathscr{H}^{n-1} \,dt = \intQT f_i(x,t,u)\varphi \,dxdt.
	\end{aligned}
	\end{equation}
\end{definition}

\begin{proof}[Proof of Theorem \ref{th5}]
	The proof of this theorem is similar to that of Theorems \ref{th1}, \ref{th2} and \ref{th3}, except for the fact that the $L^1$-norm can be obtained in a different, and easier, way. Since $\varphi \equiv 1$ is an admissible test function we get from \eqref{weak_bc} and \eqref{A3} that
	\begin{align*}
		\sumi \intO c_iu_i(\cdot,t)  \,dx\bigg|_{t=0}^{t=T} + \sumi c_i\alpha_i\int_0^T\int_{\pa\Omega} u_id\mathscr{H}^{n-1} \,dt\\
		\leq K_1\intQT u_i  \,dxdt + K_2|\Omega|T.
	\end{align*}
	The $L^1$-bound is uniform in time in case $K_1<0$ or $K_1 = K_2 =0$ (using a similar idea to Lemma \ref{L1-uni-bound}).
\end{proof}

\section{Applications}\label{sec-appli}
We show the application of our results to SEIR(-D) model considered in \cite{auricchio2023well,viguerie2020diffusion,viguerie2021simulating} and its variants. First, the SEIR model introduced in  \cite{viguerie2020diffusion, viguerie2021simulating, auricchio2023well} reads as
\begin{equation}\label{sys-model}
	\begin{cases}
		\pa_t s= \alpha n - (1 - A_0/n) \beta_i si - (1 - A_0/n) \beta_e se - \mu s + \nabla\cdot(n \nu_s \nabla s), &x\in\Omega\\
		\pa_t e= (1 - A_0/n) \beta_i si + (1 - A_0/n) \beta_e se - \sigma e - \phi_e e - \mu e + \nabla\cdot(n \nu_e \nabla e), &x\in\Omega\\
		\pa_t i = \sigma e - \phi_d ni - \phi_r i - \mu i + \nabla\cdot(n \nu_i \nabla i), &x\in\Omega\\
		\pa_t r = \phi_r i + \phi_e e - \mu r + \nabla\cdot(n \nu_r \nabla r), &x\in\Omega,\\
         \nabla s \cdot \eta=\nabla e \cdot \eta=\nabla i \cdot \eta=\nabla r \cdot \eta=0, &x\in\pa\Omega,
	\end{cases}
\end{equation}
where the symbols $s$, $e$, $i$ and $r$ denote the susceptible-, exposed-, infected-, recovered populations, respectively, and $n:=s+e+i+r$ is the total living population, $\nu_s$, $\nu_e$, $\nu_i$, $\nu_r$, $\beta_i$, $\beta_e$, $\alpha$, $\mu$, $\phi$, $\sigma$, $\phi_d$, $\phi_r$ and $\phi_e$ are positive constants. The global existence of bounded weak solutions was shown in \cite{auricchio2023well} where it was imposed that all diffusion rates are the same, i.e. $\nu_s = \nu_e = \nu_i = \nu_r$, the term $1-A_0/n$ is replaced by a non-singular function $A(n)$, for instance $A(n) = (1-A_0/n)_+$, and the term $-\phi_dni$ is replaced by $-\phi_d i$. These are needed to show the total population $n$ is pointwise bounded from below by a positive constant, making the diffusion operators non-degenerate\footnote{In fact, thanks to the lower bound of $n$, it was possible to replace all diffusion operators by a common function $\kappa(n)$, where $\kappa: (0,\infty)\to (0,\infty)$ is a continuous function.}. The method therein seems to break down when the diffusion are different. By applying our main results in Theorem \ref{th1}, we show that these assumptions can be removed. We have the following result.
\begin{theorem}\label{th-Model}
	Assume all diffusion coefficients $\nu_s, \nu_e, \nu_i, \nu_r$, and other parameters in \eqref{sys-model} to be positive. Then, for any non-negative and bounded initial data $(s_0,e_0,i_0,r_0)\in L_+^{\infty}(\Omega)^4$, there is a global bounded weak solution to \eqref{sys-m}, i.e. for any $T>0$,
	\begin{equation}\label{model-ext}
		\sup_{t\in (0,T)}\left(\|s(t)\|_{\LO{\infty}}+\|e(t)\|_{\LO{\infty}}+ \|i(t)\|_{\LO{\infty}}+ \|r(t)\|_{\LO{\infty}} \right) <\infty.
	\end{equation}
	Moreover, if  $\alpha\leq \mu$, then this solution is bounded uniformly in time, i.e.
	\begin{equation}\label{model-uni}
		\sup_{t>0}\big(\|s(t)\|_{\LO{\infty}}+ \|e(t)\|_{\LO{\infty}}+ \|i(t)\|_{\LO{\infty}}+ \|r(t)\|_{\LO{\infty}} \big) < +\infty.
	\end{equation}
\end{theorem}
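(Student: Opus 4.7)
The plan is to exhibit \eqref{sys-model} as a particular instance of the abstract quasi-linear system \eqref{system} and then simply invoke Theorem \ref{th1}. First I would set $m=4$, identify $(u_1,u_2,u_3,u_4)=(s,e,i,r)$, take the diffusion matrices $D_k$ to be $\nu_s,\nu_e,\nu_i,\nu_r$ times the identity, and put $\Phi(u) = u_1+u_2+u_3+u_4 = n$. Conditions \eqref{D:cond-1}, \eqref{D:cond-2} are immediate from positivity of the diffusion rates, and \eqref{Q1}--\eqref{Q3} hold with $b=\pi=1$ and $M=\widetilde M=1$. The delicate structural point appears in the reaction conditions because of the apparently singular factor $(1-A_0/n)$; the key observation already flagged in the introduction is that $si/n\le s$ and $se/n\le e$, so the products $(1-A_0/n)\,si$ and $(1-A_0/n)\,se$ extend continuously from $\mathbb{R}^4_+\setminus\{0\}$ to the full positive cone by the value zero at the origin, and are locally Lipschitz on bounded subsets. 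This yields \eqref{A1}, while \eqref{A2} is checked componentwise (with the natural $(1-A_0/n)_+$ interpretation used in \cite{auricchio2023well}, which does not affect any subsequent upper bound).

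Next I would verify \eqref{A3}, \eqref{A4}, \eqref{A5}. Summing the four equations cancels every nonlinear contribution except the mortality terms, giving $\sum_k f_k = (\alpha-\mu)n - \phi_d n\, i \le (\alpha-\mu)n$, so with $c_k=1$ condition \eqref{A3} holds with $K_1=\max(\alpha-\mu,0)$ and $K_2=0$. For \eqref{A4} I would take $r=1$ and the lower triangular matrix
\[
A = \begin{pmatrix}1 & 0 & 0 & 0\\ 1 & 1 & 0 & 0\\ 0 & 0 & 1 & 0\\ 0 & 0 & 0 & 1\end{pmatrix},
\]
together with the pointwise bounds $f_1\le \alpha n + A_0(\beta_i+\beta_e)s$ (using $si/n\le s$, $se/n\le e$), $f_1+f_2\le \alpha n$, $f_3\le \sigma e$, and $f_4\le \phi_r i + \phi_e e$, each of which is at most linear in $u$; the same estimates deliver \eqref{A5} with $l=1$. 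Since $b=1$ and $r=1$, the growth condition $1\le r < 1+b+2/N = 2+2/N$ from \eqref{growth-condition} is automatic for every dimension $N\ge 1$.

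With all hypotheses of Theorem \ref{th1} verified, I obtain a global weak solution with $s,e,i,r\in L^\infty_{\mathrm{loc}}(0,\infty;\LO\infty)$, which is precisely \eqref{model-ext}. For the uniform-in-time estimate \eqref{model-uni} I would use the hypothesis $\alpha\le\mu$: if $\alpha<\mu$ then $K_1=\alpha-\mu<0$, and if $\alpha=\mu$ then $K_1=K_2=0$; either case triggers the uniform-in-time clause of Theorem \ref{th1}, yielding \eqref{model-uni}. The main conceptual obstacle is simply to recognise that the two features most often treated as essential in \cite{auricchio2023well}---the degeneracy of the diffusion where $n$ vanishes and the singular factor $(1-A_0/n)$---are absorbed automatically by the abstract framework: the former by the $L^p$-energy machinery underlying Theorem \ref{th1} (which only uses the degenerate lower bound $\Phi(u)\ge u_i$ from \eqref{Q2}), and the latter by the continuous extension noted above, so that no pointwise lower bound on $n$ is required and the assumption that all diffusion rates coincide can be dropped.
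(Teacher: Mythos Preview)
Your approach is correct but differs from the paper's. The paper does not apply Theorem~\ref{th1} directly; it explicitly asserts that ``the nonlinearities in \eqref{sys-model} are not locally Lipschitz continuous around $0$'' and therefore introduces an additional regularisation, replacing $A_0/n$ by $A_0/(n^\delta+\delta)$ for $\delta>0$. Theorem~\ref{th1} is applied to this $\delta$-system to obtain $L^\infty(Q_T)$ bounds uniform in $\delta$, and then a second Aubin--Lions compactness argument (mirroring Section~\ref{subsec-existence}) is run to pass $\delta\to 0$ and recover a weak solution of \eqref{sys-model} itself.

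Your direct route---observing that $si/n$ and $se/n$ extend by $0$ to Lipschitz functions on all of $\mathbb{R}^4_+$, since each partial derivative (e.g.\ $\partial_s(si/n)=i(e+i+r)/n^2\le i/n\le 1$) is bounded on $\mathbb{R}^4_+\setminus\{0\}$ and the cone is convex---is valid and shorter; a one-line Lipschitz extension to $\mathbb{R}^4$ then gives \eqref{A1} as stated. The paper's extra $\delta$-layer is thus arguably redundant, though it has the virtue of sidestepping any discussion of the regularity of $si/n$ at $n=0$. Note also that both arguments share the small issue you flag: quasi-positivity \eqref{A2} for the $e$-equation fails when $n<A_0$ unless $(1-A_0/n)$ is read as $(1-A_0/n)_+$, and the paper's $\delta$-approximation does not by itself cure this either.
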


\begin{proof}
	
	It is noted that the nonlinearities in \eqref{sys-model} are not locally Lipschitz continuous around $0$. We circumvent this problem by considering the following approximating system for each $\delta>0$,
	\begin{equation}\label{sys-approximate-model}
		\begin{cases}
			\pa_t s^{\delta}= \alpha n^{\delta} - (1 - \frac{A_0}{n^{\delta} + \delta} ) \beta_i s^{\delta} i^{\delta} - (1 - \frac{A_0}{n^{\delta} + \delta} ) \beta_e s^{\delta} e^{\delta} - \mu s^{\delta} + \nabla\cdot(n^{\delta} \nu_s \nabla s^{\delta}), &x\in\Omega\\
			\pa_t e^{\delta}= (1 - \frac{A_0}{n^{\delta} + \delta} ) \beta_i s^{\delta} i^{\delta} + (1 - \frac{A_0}{n^{\delta} + \delta} ) \beta_e s^{\delta} e^{\delta} - \sigma e^{\delta} - \phi_e e^{\delta} - \mu e^{\delta} + \nabla\cdot(n^{\delta} \nu_e \nabla e^{\delta}), &x\in\Omega\\
			\pa_t i^{\delta} = \sigma e^{\delta} - \phi_d n^{\delta} i^{\delta} - \phi_r i^{\delta} - \mu i^{\delta} + \nabla\cdot(n^{\delta} \nu_i \nabla i^{\delta}), &x\in\Omega\\
			\pa_t r^{\delta} = \phi_r i^{\delta} + \phi_e e^{\delta} - \mu r^{\delta} + \nabla\cdot(n^{\delta} \nu_r \nabla r^{\delta}), &x\in\Omega,\\
			\nabla s^{\delta} \cdot\eta=\nabla e^{\delta} \cdot\eta=\nabla i^{\delta} \cdot\eta=\nabla r^{\delta} \cdot \eta=0, &x\in\pa\Omega,
		\end{cases}
	\end{equation}
	where $u^\delta = (s^\delta, e^\delta, i^\delta, r^\delta)$, $n^\delta = s^\delta + e^\delta + i^\delta + r^\delta$, $f^{\delta}_s, f^{\delta}_e, f^{\delta}_i$ and $f^{\delta}_r$ the nonlinearities in the equations of $s^{\delta}, e^{\delta}, i^{\delta}$ and $r^{\delta}$, respectively.
	Consequently
	\begin{equation*}
		\begin{cases}
			f^{\delta}_s(u^{\delta}) \leq \alpha n^{\delta}  + (A_0\beta_i + A_0\beta_e)s^{\delta} \\
			f^{\delta}_s(u^{\delta}) + f^{\delta}_e(u^{\delta})\leq \alpha  n^{\delta}\\
			f^{\delta}_s(u^{\delta}) + f^{\delta}_e(u^{\delta}) + f^{\delta}_i(u^{\delta}) \leq \alpha  n^{\delta}\\
			f^{\delta}_s(u^{\delta}) + f^{\delta}_e(u^{\delta}) + f^{\delta}_i(u^{\delta}) + f^{\delta}_r(u^{\delta}) \leq (\alpha - \mu) n^{\delta}.
		\end{cases}
	\end{equation*}
	Following from  Theorem \ref{th1}, we can obtain the global existence of a weak solution for the system \eqref{sys-approximate-model} and 
	\begin{align}\label{model-n-infty}
		\|s^{\delta}\|_{\LQ{\infty}}, \|e^{\delta}\|_{\LQ{\infty}}, \|i^{\delta}\|_{\LQ{\infty}}, \|r^{\delta}\|_{\LQ{\infty}}  \leq C_T,
	\end{align}
	where $C_T$ independent of $\delta$.
	Due to the polynomial growth of $f^{\delta}_s, f^{\delta}_e, f^{\delta}_i$ and $f^{\delta}_r$, we can get
	\begin{align*}
		\|f^{\delta}_s (u^{\delta})\|_{\LQ{\infty}}, \|f^{\delta}_e (u^{\delta})\|_{\LQ{\infty}}, \|f^{\delta}_i (u^{\delta})\|_{\LQ{\infty}}, \|f^{\delta}_r (u^{\delta})\|_{\LQ{\infty}}  \leq C_T.
	\end{align*}
	By multiply the first equation of \eqref{sys-approximate-model} by $s^{\delta} $ then integrating on $\Omega_T$ gives 
	\begin{align}\label{b1}
		\sup_{t\in(0,T)}\intO s^{\delta}(t)^2 + \nu_s \intQT n^{\delta}|\nabla s^{\delta}|^{2}dxdt \leq C_T.
	\end{align}
	Since $n^\delta \ge s^\delta$, it follows
	\begin{align}\label{model-nabla-s}
		C_T \ge \intQT s^\delta|\nabla s^\delta|^2dxdt \ge C \|\nabla  (s^{\delta})^{\frac 32} \|_{\LQ{2}}^2.
	\end{align}
	By testing the first equation of \eqref{sys-approximate-model} with $\phi \in L^2(0,T; H^1(\Omega))$, we have
	\begin{align*}
		&\intQT \pa_t s^{\delta} \phi \,dxdt = - \nu_s\intQT n^{\delta}  \nabla s^{\delta} \cdot\nabla \phi \,dxdt + \intQT f^{\delta}_s(u^{\delta}) \phi \,dxdt\\
		&\le \nu_s\|\sqrt{n^\delta}\|_{\LQ{\infty}}\| \sqrt{n^\delta}|\nabla s^\delta|\|_{\LQ{2}}\|\nabla \phi\|_{\LQ{2}} + \|f_s^{\delta}(u^\delta)\|_{\LQ{2}}\|\phi\|_{\LQ{2}}\\
		&\leq C_T \|\phi\|_{L^2(0,T; H^1(\Omega))},
	\end{align*}
	where we used \eqref{model-n-infty} and \eqref{b1}. Thus, 
	\begin{align*}
		\|\pa_t s^{\delta}\|_{L^2(0, T;(H^1(\Omega))')} \leq C_T.
	\end{align*}
	Applying a nonlinear version of Aubin-Lions Lemma, see e.g. \cite[Theorem 1]{Moussa2016Some} to ensure
	\begin{equation}\label{eq-strong-conv-s^delta}
		s^{\delta}\stackrel{\delta\rightarrow 0}{\longrightarrow} s \quad \text{strongly in } \LQ2
	\end{equation}
	Thanks to the $L^{\infty}$ bound of $s^{\delta}$, this convergence in fact holds in $\LQ p$ for any $1\leq p<\infty$. By the inequality \eqref{model-nabla-s},
	we obtain 
	\begin{align*}
		\nabla s^\delta \rightharpoonup \nabla s, \quad \text{weakly in } \LQ 2.
	\end{align*}
	Similarly, for  $e^{\delta}, i^{\delta}$ and $r^{\delta}$, we can get the strong convergence in $\LQ{p}$ for any $1\leq p<\infty$ and weak convergence in $L^2(0, T; H^1(\Omega))$. Now, let $\delta\rightarrow0$ in the weak formulation of \eqref{sys-approximate-model} with $\psi\in L^2(0, T; (H^1(\Omega))')$ and using \cite[Lemma A.2]{fischer2022global}, we get the weak solution $u= (s, e, i, r)$ of \eqref{sys-model}.
	
	\medskip
	
	Furthermore, if  $\alpha \leq \mu$, we have  the uniform-in-time boundedness first for solutions to \eqref{sys-approximate-model} and consequently for \eqref{sys-model}.
\end{proof}

\medskip
\noindent Thanks to the robustness of our approach, we can also consider a variant \eqref{sys-model} where
\begin{itemize}
	\item $\beta_i$ and $\beta_e$ are functions of $n$ or the combinations of $n$ and  species $i$ or $e$ (see the original model proposed in \cite{viguerie2020diffusion});
	\item the diffusion rates are functions of $(x,t)$, which shows the possibly high heterogeneity of the environment; and
	\item and the other rates $\alpha, \mu, \sigma, \phi_e, \phi_d, \phi_r$ are non-negative functions of $(x,t)$,
\end{itemize}
and therefore the system reads as
\begin{equation}\label{sys-model-2}
	\begin{cases}
		\pa_t s= \alpha n - (1-\frac{A_0}{n}) \beta_i(i,n) si - (1-\frac{A_0}{n}) \beta_e(e,n) se - \mu s + \nabla\cdot(n \nu_s(x,t) \nabla s),\\
		\pa_t e= (1-\frac{A_0}{n}) \beta_i(i,n) si + (1-\frac{A_0}{n}) \beta_e(e,n) se - \sigma e - \phi_e e - \mu e + \nabla\cdot(n \nu_e(x,t) \nabla e),\\
		\pa_t i = \sigma e - \phi_dn i - \phi_r i - \mu i + \nabla\cdot(n \nu_i (x,t)\nabla i),\\
		\pa_t r = \phi_r i + \phi_e e - \mu r + \nabla\cdot(n \nu_r(x,t) \nabla r), \\
           \nabla s \cdot\eta=\nabla e \cdot\eta=\nabla i \cdot\eta=\nabla r \cdot\eta=0.
	\end{cases}
\end{equation}
We assume the following:
\begin{enumerate}[label=(M\theenumi'),ref=M\theenumi']
	\item\label{M1'} $\beta_i(i,n), \beta_e(e,n)$ are non-negative continuous functions and satisfy
	\begin{equation*} 
		\beta_i(i,n)  \leq c_i \bra{1 + i + n}, \quad \beta_e(e,n) \le c_e\bra{1 + e+n},
	\end{equation*}
	for some constants $c_i, c_e>0$,
	\item\label{M2'} for $z\in \{s,e,i,r\}$, $\nu_z: \Omega\times[0,\infty) \to \R^{n\times n}$ such that $\nu_z\in L^\infty(\Omega\times(0,T);\R^{n\times n})$ for each $T>0$, and there is a constant $\lambda_z>0$ such that
	\begin{equation*}
		\lambda_z|\xi|^2 \le \xi^\top \nu_z(x,t) \xi, \quad \forall (x,t)\in\Omega\times[0,\infty), \quad \forall \xi \R^n;
	\end{equation*}
	\item \label{M3'} all non-negative functions $\alpha, \mu, \sigma, \phi_e, \phi_d, \phi_r: \Omega\times[0,\infty)$ are bounded by a common constant, i.e. $\exists M>0$ such that
	\begin{equation*}
		(\alpha + \mu + \sigma + \phi_e + \phi_d+\phi_r)(x,t) \le M, \quad \forall (x,t)\in \Omega\times [0,\infty).
	\end{equation*}
\end{enumerate}

The following theorem follows directly from Theorem \ref{th1}.
\begin{theorem}
	Assume \eqref{M1'}--\eqref{M3'}. Then, for any non-negative bounded initial data, there exists a  global bounded  weak solution to \eqref{sys-model-2}, i.e. for any $T>0$,
	\begin{equation*}
		\sup_{t\in (0,T)}\big(\|s(t)\|_{\LO{\infty}}+\|e(t)\|_{\LO{\infty}}+ \|i(t)\|_{\LO{\infty}}+\|r(t)\|_{\LO{\infty}} \big) < +\infty.
	\end{equation*}
	In particular, if  $\alpha(x,t)\leq \mu(x,t)$ for all $(x,t)\in \Omega\times[0,\infty)$, then the solution is bounded uniformly in time, i.e.
	\begin{equation*}
		\sup_{t>0}\big(\|s(t)\|_{\LO{\infty}}+ \|e(t)\|_{\LO{\infty}}+ \|i(t)\|_{\LO{\infty}}+ \|r(t)\|_{\LO{\infty}}\big) < +\infty.
	\end{equation*}
\end{theorem}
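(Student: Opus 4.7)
The plan is to recast \eqref{sys-model-2} as an instance of the general quasilinear system \eqref{system} and invoke Theorem \ref{th1}. Setting $u=(s,e,i,r)$, $n=s+e+i+r$, and $\Phi(u)=n$, assumptions \eqref{Q1}--\eqref{Q3} hold with $b=\pi=1$ (using $n\ge u_j$ for each $j$), and $D_z=\nu_z(x,t)I$ satisfies \eqref{D:cond-1}--\eqref{D:cond-2} by \eqref{M2'}. I would first handle the singular factor $1-A_0/n$ by the same regularization as in the proof of Theorem \ref{th-Model}, replacing it by $(1-A_0/(n^\delta+\delta))_+$ (the positive part is included only to preserve quasi-positivity \eqref{A2}), solve the resulting $\delta$-system by Theorem \ref{th1}, and then recover the limit $\delta\to 0$ by the nonlinear Aubin--Lions argument used there.

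On the $\delta$-approximate system, the crucial reaction estimates follow from \eqref{M1'}, \eqref{M3'}, and the elementary pointwise inequalities $si/n\le s$ and $se/n\le e$. They yield the first-row bound
\[
f_s\le\alpha n+A_0\beta_i(i,n)\tfrac{si}{n}+A_0\beta_e(e,n)\tfrac{se}{n}\le C\bigl(1+n+(1+i+n)s+(1+e+n)s\bigr)\le C\Bigl(1+\sum_j u_j^2\Bigr)
\]
by Young's inequality, together with $|f_z|\le C(1+\sum_j u_j^3)$ for $z\in\{e,i,r\}$, so \eqref{A5} holds. Summing all four equations, the cubic cross terms cancel and one gets $\sum_k f_k=\alpha n-\phi_d ni-\mu n\le(\alpha-\mu)n$; combined with \eqref{M3'} this gives \eqref{A3} with $K_1\le M$ and $K_2=0$, and in the regime $\alpha\le\mu$ it reduces to $K_1=K_2=0$, which is precisely the hypothesis under which Theorem \ref{th1} upgrades the local bound to a uniform-in-time $L^\infty$ bound.

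For the intermediate sum condition \eqref{A4} I would take the lower triangular matrix
\[
A=\begin{pmatrix}1&0&0&0\\ 1&1&0&0\\ 1&1&1&0\\ 1&1&1&1\end{pmatrix}.
\]
The first row is precisely the bound on $f_s$ above. In rows two, three, and four the cubic cross terms $(1-A_0/n)\beta_i si$ and $(1-A_0/n)\beta_e se$ cancel successively, leaving only terms bounded by $C(1+\sum_j u_j^2)$ (the $\phi_d ni$ term being controlled by $n^2\le 4\sum_j u_j^2$). Hence \eqref{A4} holds with $r=2$, and since $b=1$ we have $r=2<1+b+2/N=2+2/N$ in every dimension, so the growth condition \eqref{growth-condition} of Theorem \ref{th1} is met. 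Applying Theorem \ref{th1} to the $\delta$-approximate system and passing to the limit $\delta\to 0$ along the lines of Theorem \ref{th-Model} then yields the claimed weak solution of \eqref{sys-model-2}, together with the uniform-in-time bound under $\alpha\le\mu$. The only genuinely delicate step is the passage to the limit in the degenerate diffusion $n^\delta\nabla u_i^\delta$; as in Theorem \ref{th-Model} this is handled through the $L^2(Q_T)$ estimate on $\nabla(u_i^\delta)^{3/2}$ provided by Theorem \ref{th1} (note $(b+2)/2=3/2$ since $b=1$) and the resulting weak convergence of $\nabla u_i^\delta$ to $\nabla u_i$.
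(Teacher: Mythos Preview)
Your proposal is correct and follows the same route as the paper, which simply states that the result ``follows directly from Theorem \ref{th1}''. You have in fact supplied the details the paper omits: the explicit verification that, under \eqref{M1'}, the linear growth of $\beta_i,\beta_e$ forces $r=2$ in the intermediate sum condition \eqref{A4} (rather than $r=1$ as for the constant-rate model in Section~\ref{subsec1.2}), the check that $r=2<2+2/N$ so \eqref{growth-condition} holds in every dimension, and the $\delta$-regularisation of the singular factor $1-A_0/n$ together with the Aubin--Lions passage to the limit, both borrowed from the proof of Theorem~\ref{th-Model}.
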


\appendix
\section{Proof of Lemma \ref{lem-L-infty-bound}}\label{appendix}
We first state the following interpolation inequality.
\begin{lemma}\label{lem-key2}
	Let $\gamma\geq 1$ and $\alpha>0$. Then we have the following interpolation inequality
	\begin{equation}\label{eq-key2-1}
		\|u\|^{\lambda}_{\LQ{\sigma}}\leq C \left( \|u\|^{\gamma}_{L^{\infty}(0,T;L^{\gamma}(\Omega))} + \|u\|^{\alpha}_{L^{\alpha}(0,T;L^{\beta}(\Omega))} \right),
	\end{equation}
	where
	\begin{equation}\label{eq-key2-2}
		\begin{cases}
			\lambda=\frac{\alpha\beta+\gamma(\beta-\alpha)}{2\beta-\alpha} \quad \text{and} \quad   \sigma=\frac{\alpha\beta+\gamma(\beta-\alpha)}{\beta}, &\text{ if } \quad 0<\beta<\infty,\\
			\lambda=\frac{\alpha+\gamma}{2} \quad \text{and} \quad   \sigma=\alpha+\gamma,   &\text{ if }\quad \beta=\infty.
		\end{cases}
	\end{equation}
\end{lemma}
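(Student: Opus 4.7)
The lemma is a classical mixed-norm interpolation, so the plan is to combine (i) a spatial Hölder interpolation at each fixed time, (ii) integration in time using the $L^\infty(0,T;L^\gamma(\Omega))$ norm to extract the first factor, and (iii) Young's inequality to split the resulting product into a sum. The exponents in \eqref{eq-key2-2} are precisely those that make all three steps balance.

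\textbf{Case $0<\beta<\infty$.} First I fix $t\in(0,T)$ and apply the standard Hölder interpolation
\[
\|u(\cdot,t)\|_{L^\sigma(\Omega)} \le \|u(\cdot,t)\|_{L^\gamma(\Omega)}^{\theta}\,\|u(\cdot,t)\|_{L^\beta(\Omega)}^{1-\theta}, \qquad \tfrac{1}{\sigma}=\tfrac{\theta}{\gamma}+\tfrac{1-\theta}{\beta},
\]
choosing $\theta = (\sigma-\alpha)/\sigma$, i.e.\ $(1-\theta)\sigma=\alpha$. A short algebraic check shows the Hölder compatibility condition then forces exactly $\sigma=(\alpha\beta+\gamma(\beta-\alpha))/\beta$, matching \eqref{eq-key2-2}. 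Raising to the $\sigma$-th power, integrating in $t$, and bounding the $L^\gamma$-factor by its essential supremum in $t$, I obtain
\[
\|u\|_{L^\sigma(Q_T)}^{\sigma} \le \|u\|_{L^\infty(0,T;L^\gamma(\Omega))}^{\sigma-\alpha}\int_0^T\|u(\cdot,t)\|_{L^\beta(\Omega)}^{\alpha}\,dt = \|u\|_{L^\infty(0,T;L^\gamma(\Omega))}^{\sigma-\alpha}\,\|u\|_{L^\alpha(0,T;L^\beta(\Omega))}^{\alpha}.
\]
I then raise to the power $\lambda/\sigma$ and apply Young's inequality with conjugate exponents $p=\gamma\sigma/((\sigma-\alpha)\lambda)$ and $q=\sigma/\lambda$; the condition $1/p+1/q=1$ is equivalent to $\lambda=\gamma\sigma/(\sigma-\alpha+\gamma)$, which, after substituting the value of $\sigma$, simplifies to $\lambda=(\alpha\beta+\gamma(\beta-\alpha))/(2\beta-\alpha)$, matching \eqref{eq-key2-2}. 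This yields the desired bound with $C=\max(1/p,1/q)$.

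\textbf{Case $\beta=\infty$.} Here the spatial interpolation is replaced by the elementary pointwise estimate $\|u(\cdot,t)\|_{L^\sigma(\Omega)}^\sigma \le \|u(\cdot,t)\|_{L^\infty(\Omega)}^{\sigma-\gamma}\|u(\cdot,t)\|_{L^\gamma(\Omega)}^\gamma$, valid whenever $\sigma\ge\gamma$. Choosing $\sigma-\gamma=\alpha$, so $\sigma=\alpha+\gamma$, the same time integration yields
\[
\|u\|_{L^\sigma(Q_T)}^{\sigma} \le \|u\|_{L^\infty(0,T;L^\gamma(\Omega))}^{\gamma}\,\|u\|_{L^\alpha(0,T;L^\infty(\Omega))}^{\alpha},
\]
and Young's inequality with $p=q=2$, equivalently $\lambda=(\alpha+\gamma)/2$, finishes the argument. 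The only points requiring minor care are checking $\theta\in[0,1]$ and $p,q>1$ in the first case, which are implicit in the statement (they amount to $\sigma$ lying between $\gamma$ and $\beta$); there is no real analytic obstacle — the lemma reduces to a careful balancing of exponents and the verification that the formulas in \eqref{eq-key2-2} are exactly the ones that make Hölder and Young close up.
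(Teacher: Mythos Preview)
Your proof is correct and follows essentially the same route as the paper's: spatial interpolation between $L^\gamma$ and $L^\beta$ with parameter $\theta$ determined by $(1-\theta)\sigma=\alpha$, followed by Young's inequality to fix $\lambda$. The only cosmetic difference is that the paper invokes the mixed-norm interpolation $\|u\|_{L^\sigma(Q_T)}\le \|u\|_{L^\infty(0,T;L^\gamma)}^{\theta}\|u\|_{L^\alpha(0,T;L^\beta)}^{1-\theta}$ in one line (with the simultaneous conditions $1/\sigma=(1-\theta)/\alpha=\theta/\gamma+(1-\theta)/\beta$), whereas you unpack that step into ``H\"older in $x$ at fixed $t$, then take the sup in $t$''; the resulting inequality and the subsequent Young argument are identical.
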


\begin{proof}
	For $0<\beta<\infty$. We apply an interpolation inequality to get
	\begin{equation}\label{eq-key2-3}
		\|u\|_{\LQ{\sigma}}\leq \|u\|^{\theta}_{L^{\infty}(0,T;L^{\gamma}(\Omega))}  \|u\|^{1-\theta}_{L^{\alpha}(0,T;L^{\beta}(\Omega))}
	\end{equation}
	with $\theta\in(0,1)$ satisfying
	\begin{equation}
		\frac{1}{\sigma}=\frac{\theta}{\infty} + \frac{1-\theta}{\alpha}=\frac{\theta}{\gamma} + \frac{1-\theta}{\beta}.
	\end{equation}
	It follows that
	\begin{equation}\label{eq-key2-4}
		\theta=\frac{\gamma(\beta-\alpha)}{\alpha\beta+\gamma(\beta-\alpha)}, \quad
		1-\theta=\frac{\alpha\beta}{\alpha\beta + \gamma(\beta-\alpha)} \quad \text{and} \quad
		\sigma=\frac{\alpha\beta + \gamma(\beta-\alpha)}{\beta}.
	\end{equation}
	From \eqref{eq-key2-3} and $\lambda>0$ to be chosen, using Young's inequality we obtain
	\begin{equation*}
		\begin{split}
			\|u\|^{\lambda}_{\LQ{\sigma}} & \leq \|u\|^{\theta\lambda}_{L^{\infty}(0,T;L^{\gamma}(\Omega))} \|u\|^{(1-\theta)\lambda}_{L^{\alpha}(0,T;L^{\beta}(\Omega))}\\
			& \leq \frac{\theta\lambda}{\gamma} \|u\|^{\gamma}_{L^{\infty}(0,T;L^{\gamma}(\Omega))} + \frac{\gamma-\theta\lambda}{\gamma} \|u\|^{(1-\theta)\lambda\frac{\gamma}{\gamma-\theta\lambda}}_{L^{\alpha}(0,T;L^{\beta}(\Omega))}.
		\end{split}
	\end{equation*}
	Now, we choose $\lambda$ such that
	$$\frac{(1-\theta) \lambda \gamma}{\gamma-\theta\lambda}=\alpha,$$
	which is equivalent to
	$$\lambda=\frac{\alpha\gamma}{(1-\theta)\gamma+\theta\alpha}=\frac{\alpha\beta+\gamma(\beta-\alpha)}{2\beta-\alpha},$$
	where we used \eqref{eq-key2-4}.
	Therefore
	\begin{equation*}
		\|u\|^{\lambda}_{\LQ{\sigma}}\leq C \left(\|u\|^{\gamma}_{L^{\infty}(0,T;L^{\gamma}(\Omega))} + \|u\|^{\alpha}_{L^{\alpha}(0,T;L^{\beta}(\Omega))} \right)
	\end{equation*}
	with
	\begin{equation*}
		\lambda=\frac{\alpha\beta+\gamma(\beta-\alpha)}{2\beta-\alpha} \quad \text{and} \quad   \sigma=\frac{\alpha\beta+\gamma(\beta-\alpha)}{\beta}.
	\end{equation*}
	For $\beta=\infty$, we can use a similar method to get the result. This completes the proof of Lemma \ref{lem-key2}.
\end{proof}

\begin{lemma} [\cite{fellner2020global}, Lemma 2.4] \label{ap-k-lem1}
Let $\left\{y_{n}\right\}_{n \geq 1}$ be a sequence of positive numbers which satisfies
\begin{equation*}
y_{n+1} \leq K B^{n}\left(y_{n}^{\gamma}+y_{n}^{\kappa}\right)
\end{equation*}
where $K, B>0$ and $\gamma, \kappa>1$ are independent of $n$. Then there exists $\varepsilon>0$ such that, if $y_{1} \leq \varepsilon$, then
\begin{equation*}
\lim _{n \rightarrow \infty} y_{n}=0 .
\end{equation*}
\end{lemma}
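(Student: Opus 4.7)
Without loss of generality I will assume $\gamma \le \kappa$ (the two exponents play symmetric roles). The idea is to seek a geometric upper bound of the form $y_n \le a r^n$ with $r \in (0,1)$ small and $a>0$ small, and then to reverse-engineer admissible constants $a, r$ and $\varepsilon$ from the recursion. This is the standard De Giorgi/Moser closing trick: geometric decay is preserved by a super-linear recursion precisely when the seed $y_1$ is taken small enough to overcome the amplification factor $KB^n$.

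\textbf{Step 1: Setup.} Choose $r \in (0,1)$ satisfying $B r^{\gamma-1} < 1$; since $\gamma > 1$ this is achieved by taking $r < \min\{1, B^{-1/(\gamma-1)}\}$. Next pick
\begin{equation*}
a \le \min\Bigl\{\tfrac{1}{r},\ \bigl(\tfrac{r}{2K}\bigr)^{1/(\gamma-1)}\Bigr\},
\end{equation*}
and set $\varepsilon := a r$. Note $a r \le 1$ so that $a r^n \le 1$ for all $n \ge 1$.

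\textbf{Step 2: Induction.} I will prove by induction that $y_n \le a r^n$ for every $n \ge 1$. The base case is immediate since $y_1 \le \varepsilon = a r$. For the inductive step, suppose $y_n \le a r^n \le 1$. Since $1 < \gamma \le \kappa$ and $y_n \le 1$, we have $y_n^{\kappa} \le y_n^{\gamma}$, hence
\begin{equation*}
y_{n+1} \le K B^n \bigl(y_n^{\gamma} + y_n^{\kappa}\bigr) \le 2 K B^n y_n^{\gamma} \le 2K a^{\gamma} B^n r^{n\gamma}.
\end{equation*}
To close the induction it suffices to verify $2 K a^{\gamma} B^n r^{n\gamma} \le a r^{n+1}$, which rearranges to
\begin{equation*}
2 K a^{\gamma-1} r^{-1} \bigl(B r^{\gamma-1}\bigr)^{n} \le 1.
\end{equation*}
Since $B r^{\gamma-1} < 1$, we have $(B r^{\gamma-1})^n \le 1$ for all $n \ge 1$, so it is enough to check $2 K a^{\gamma - 1} r^{-1} \le 1$, which is exactly the condition imposed on $a$ in Step~1. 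This closes the induction and yields $y_n \le a r^n \to 0$.

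\textbf{Main obstacle.} There is no serious analytic obstacle; the entire difficulty is bookkeeping. The only subtlety is choosing $r$ and $a$ in the correct order so that the two constraints (i) $B r^{\gamma-1} < 1$ (to kill the growing factor $B^n$) and (ii) $a^{\gamma-1} \le r/(2K)$ (to absorb the prefactor $2K$) are simultaneously feasible. Both use that $\gamma > 1$, which allows an arbitrarily small $r$ and then an arbitrarily small $a$. The reduction to $y_n^\gamma$ via $y_n \le 1$ is what lets the argument ignore the exponent $\kappa$, and this is why assuming $\gamma \le \kappa$ is harmless.
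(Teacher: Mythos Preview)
Your proof is correct. The paper does not actually prove this lemma; it is stated with a citation to \cite{fellner2020global}, Lemma~2.4, and used as a black box in the Moser iteration of Lemma~\ref{lem-L-infty-bound}. Your argument is the standard geometric-ansatz closing for De~Giorgi/Moser-type recursions and is exactly the kind of proof one finds in the cited reference: pick $r<1$ small enough that the super-linear gain $r^{(\gamma-1)n}$ beats the amplification $B^n$, then take the seed small enough to absorb the prefactor. Nothing is missing.
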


We are now presenting a proof of Lemma \ref{lem-L-infty-bound} using the idea of Moser iteration.
\begin{proof}[Proof of Lemma \ref{lem-L-infty-bound}]
From \eqref{A5},
  \begin{equation*}
    f_i^{\eps}(u^{\eps})\leq f_i(u^{\eps})\leq F_i(u^{\eps}):=K_4\left(1+\sum^m_{j=1}(u_j^{\eps})^l\right).
  \end{equation*}
  Using Lemma \ref{lem-L-p-bound}, we can get for any $1\leq p<\infty$ a constant $C_p>0$ exists depending on $p$ (but not on $T$) such that
  \begin{equation}\label{eq-inb}
    \|f^{\eps}_i(u^{\eps})\|_{\LQ p}\leq C_p, \quad \forall i=1,\cdots,m.
  \end{equation}
   Let $k \geq 1$ be a constant which will be specified later. For each $j \geq 0$, we define
$$
v_j:=\left(u^{\eps}_i-k+\frac{k}{2^j}\right)_{+}=\max \left\{u^{\eps}_i-k+\frac{k}{2^j},  0\right\}
$$
and
$$
A_j:=\left\{(x, t) \in Q_T: u^{\eps}_i(x, t) \geq k-\frac{k}{2^j}\right\} .
$$
The following simple observations will be helpful
\begin{equation}\label{eq-inf-1}
  \begin{aligned}
&v_{j+1}(x, t) \leq v_j(x, t), &&   \forall (x, t) \in A_j, \\
&v_j(x, t)  \geq \frac{k}{2^{j+1}}, && \forall (x, t) \in A_{j+1} \subset A_j.
\end{aligned}
\end{equation}
 By multiplying the equation $\partial_t u^{\eps}_i - \Delta\left(D_i(x,t) \,\Phi(u^{\eps})\, \nabla u^{\eps}_i\right)=f^{\eps}_i$ by $v_{j+1}$ and integrating on $Q_T$, we have
\begin{equation*}
\begin{aligned}
& \sup _{t \in(0, T)}\|v_{j+1}(t)\|^2_{\LO2} + 2 \lambda \int_0^T \int_{\Omega}  \Phi(u^{\eps}) \left|\nabla v_{j+1}\right|^2 \,d x d t \\
& \quad \leq\|v_{j+1}(0)\|^2_{\LO2} + 2 \int_0^T \int_{\Omega} f^{\eps}_i v_{j+1} \,d x d t,
\end{aligned}
\end{equation*}
where we used \eqref{D:cond-1}.
Using \eqref{Q2}, we have
\begin{equation}\label{eq-inf-2}
\begin{aligned}
& \sup _{t \in(0, T)}\|v_{j+1}(t)\|^2_{\LO2} + 2 \lambda M_i \int_0^T \int_{\Omega}(u^{\eps}_i)^{b}\left|\nabla v_{j+1}\right|^2 \,d x d t \\
& \quad \leq\|v_{j+1}(0)\|^2_{\LO2}+2 \int_0^T \int_{\Omega} f^{\eps}_i v_{j+1} \,d x d t .
\end{aligned}
\end{equation}
Note that $u^{\eps}_i \geq k-\frac{k}{2^j} \geq \frac{k}{2}$ on $A_j$ ($j\geq1$), we have
\begin{equation*}
  \begin{aligned}
2\lambda M_i \int_0^T \int_{\Omega}(u^{\eps}_i)^{b}\left|\nabla v_{j+1}\right|^2 \,d x d t & \geq 2\lambda M_i \iint_{A_j}(u^{\eps}_i)^{b}\left|\nabla v_{j+1}\right|^2 \,d x d t \\
& \geq 2\lambda M_i \left(\frac{k}{2}\right)^{b} \iint_{A_j}\left|\nabla v_{j+1}\right|^2 \,dx dt \\
& \geq \frac{\lambda M_i}{2^{b-1}} \int_0^T \int_{\Omega}\left|\nabla v_{j+1}\right|^2 \,dxdt,
\end{aligned}
\end{equation*}
thanks to $k\geq1$  and the fact that $v_{j+1} \equiv 0$ on $Q_T \backslash A_{j+1} \supset Q_T \backslash A_j$ since $A_{j+1} \subset A_j$.
By adding $\frac{\lambda M_i}{2^{b-1}} \int_0^T\|v_{j+1}\|^2_{\LO2} d t$ to both sides of \eqref{eq-inf-2}, we get
\begin{equation*}
  \begin{aligned}
& \sup _{t \in(0, T)}\|v_{j+1}(t)\|^2_{\LO2} + \frac{\lambda M_i}{2^{b-1}} \int_0^T\|v_{j+1}\|_{H^1(\Omega)}^2 \, d t \\
& \leq \frac{\lambda M_i}{2^{b-1}}  \int_0^T\|v_{j+1}\|^2_{\LO2}\, d t+\|v_{j+1}(0)\|^2_{\LO2} + 2\int_0^T \int_{\Omega} f^{\eps}_i v_{j+1} \,d x d t,
  \end{aligned}
\end{equation*}
which yields
\begin{equation}\label{eq-inf-3}
\|v_{j+1}\|_{W(0, T)}^2 \leq C\left(\int_0^T\|v_{j+1}\|^2_{\LO2} \, d t+\|v_{j+1}(0)\|^2_{\LO2} + \int_0^T \int_{\Omega} f^{\eps}_i v_{j+1} \,d x d t \right).
\end{equation}
By definition, when we choose $k \geq 2\|u^{\eps}_0\|_{L^{\infty}(\Omega)}$, we have
\begin{equation}\label{eq-inf-4}
\|v_{j+1}(0)\|^2_{\LO2}=\|\left(u^{\eps}_0-k+\frac{k}{2^{j+1}}\right)_{+}\|^2_{\LO2}=0.
\end{equation}
By using \eqref{eq-inf-1}, we have with $1 \leq \frac{2^{j+1}}{k} v_j$ on $A_{j+1}$
\begin{equation}\label{eq-inf-5}
\begin{aligned}
\int_0^T \int_{\Omega}\left|v_{j+1}\right|^2 d x d t & =\int_0^T \int_{\Omega} \mathbf{1}_{A_{j+1}}\left|v_{j+1}\right|^2 \,d x d t \\
& \leq \int_0^T \int_{\Omega} \mathbf{1}_{A_{j+1}}\left|v_j\right|^2 \,d x d t \\
& \leq\left(\frac{2^{j+1}}{k}\right)^{\frac{4}{N}} \int_0^T \int_{\Omega} \mathbf{1}_{A_{j+1}}\left|v_j\right|^{2+\frac{4}{N}} \,d x d t \\
& \leq C\left(2^{\frac{4}{N}}\right)^j\|v_j\|_{W(0, T)}^{2+\frac{4}{N}}.
\end{aligned}
\end{equation}
Choose $p>\frac{N+2}{2}$, we have
\begin{equation*}
  \sigma:=\frac{p-1}{p}\left(2+\frac{4}{N}\right)>2.
\end{equation*}
Moreover,
\begin{equation*}
  \frac{\sigma p}{p-1}=2+\frac{4}{N},
\end{equation*}
implying
\begin{equation*}
  \|v_j\|_{L^{\frac{\sigma p}{p-1}}\left(Q_T\right)} \leq C\|v_j\|_{W(0, T)}.
\end{equation*}
We now can use H\"{o}lder's inequality to estimate with \eqref{eq-inf-1}
\begin{equation}\label{eq-inf-6}
  \begin{aligned}
\int_0^T \int_{\Omega} f^{\eps}_i v_{j+1}  \,d x d t & \leq \int_0^T \int_{\Omega} f^{\eps}_i v_{j+1}\left(\frac{2^{j+1}}{k}\right)^{\sigma-1} v_j^{\sigma-1} \,d x d t \\
& \leq\left(\frac{2^{j+1}}{k}\right)^{\sigma-1} \int_0^T \int_{\Omega}\left|f^{\eps}_i \| v_j\right|^\sigma \,d x d t \\
& \leq C\left(2^{\sigma-1}\right)^j\|f^{\eps}_i\|_{L^p\left(Q_T\right)}\|v_j\|^\sigma_{L^{\frac{\sigma p}{p-1}} \left(Q_T\right)} \\
& \leq C\left(2^{\sigma-1}\right)^j\|f^{\eps}_i\|_{L^p\left(Q_T\right)}\|v_j\|_{W(0, T)}^\sigma .
\end{aligned}
\end{equation}
Inserting \eqref{eq-inf-4}, \eqref{eq-inf-5} and \eqref{eq-inf-6} into \eqref{eq-inf-3} leads to
\begin{equation}\label{eq-inf-7}
\|v_{j+1}\|_{W(0, T)}^2 \leq C\left(1+\|f^{\eps}_i\|_{L^p\left(Q_T\right)}\right) B^j\left(\|v_j\|_{W(0, T)}^{2+\frac{4}{N}}+\|v_j\|_{W(0, T)}^\sigma\right)
\end{equation}
for all $j \geq 0$, where $B=\max \left\{2^{\frac{4 }{N}} , 2^{\sigma-1}\right\}$.\\
By setting $Y_j=\|v_j\|_{W(0, T)}^2$, we obtain a sequence $\left\{Y_n\right\}_{n \geq 1}$ satisfying the property in Lemma \ref{ap-k-lem1}. It remains to show that $Y_1$ is small enough.\\
We show now that for any $\eta>0$, there exists $k \geq \max \left\{1, 2\|u^{\eps}_0\|_{L^{\infty}(\Omega)}\right\}$ large enough such that
\begin{equation}\label{eq-inf-8}
Y_1=\|v_1\|^2_{W(0, T)} \leq \eta.
\end{equation}
 Let $p> 1$. By multiplying \eqref{appro-system} by $p |u_i^{\eps}|^{p-1}$ and integrating over $\Omega$, we obtain
\begin{equation}\label{eq-e1}
\begin{split}
  \frac{d}{dt}\|u_i^{\eps}\|^{p}_{\LO{p}} -p\int_{\Omega}\nabla\cdot(D(x,t) \Phi(u^{\eps}) \nabla u_i^{\eps}) |u_i^{\eps}|^{p-1} \,dx =p\int_{\Omega}f_i^{\eps} |u_i^{\eps}|^{p-1} \,dx.
\end{split}
\end{equation}
Integration by parts and the homogeneous Neumann boundary condition $(D(x,t) \Phi(u^{\eps}) \nabla u_i^{\eps})\cdot \nu= 0$ lead to
    \begin{equation*}
    \begin{aligned}
     -p\int_{\Omega}&\nabla\cdot(D(x,t) \Phi(u^{\eps})  \nabla u_i^{\eps}) |u_i^{\eps}|^{p-1}\,dx\\
     &\geq \lambda M_ip(p-1)\int_{\Omega}|u_i^{\eps}|^{b+p-2}|\nabla u_i^{\eps}|^2 \,dx\\
     &=\lambda M_ip(p-1)\frac{4}{(b+p)^2}\int_{\Omega}|\nabla \left((u_i^{\eps})^{\frac{b+p}{2}}\right)|^2 \,dx\\
     &=:C(p,\lambda, M_i)\int_{\Omega}|\nabla \left((u_i^{\eps})^{\frac{b+p}{2}}\right)|^2 \,dx,
	\end{aligned}
	\end{equation*}
where we used \eqref{D:cond-1} and \eqref{Q2}.
  By H\"{o}lder's inequality
		\begin{equation*}
		\left|p\int_{\Omega}f_i^{\eps} |u_i^{\eps}|^{p-1} \,dx\right| \leq p\|f_i^{\eps}\|_{\LO p}\|u_i^{\eps}\|_{\LO{p}}^{p-1}.
		\end{equation*}
		Therefore, it follows from \eqref{eq-e1} that
		\begin{equation}\label{eq-mu-ori}
		\frac{d}{dt}\|u_i^{\eps}\|_{\LO{p}}^{p} + C(p,\lambda, M_i) \int_{\Omega}\left|\nabla\left((u_i^{\eps})^{\frac{b+p}{2}}\right)\right|^2dx \leq p\|f_i^{\eps}\|_{\LO {p}}\|u_i^{\eps}\|_{\LO{p}}^{p-1}.
		\end{equation}		
		By applying for $r<1$ the elementary inequality
		\begin{equation*}
		y' \leq \alpha(t)y^{1-r} \quad \Longrightarrow \quad y(T) \leq \left[y(0)^{r}+ r \int_0^T\alpha(t) \,dt\right]^{1/r},
		\end{equation*}
		to \eqref{eq-mu-ori} with $r=1/p$ and $y(t) = \|u_i^{\eps}(t)\|_{\LO p}^{p}$, we obtain
		\begin{equation}\label{eq-CT}
         \begin{split}
           \|u_i^{\eps}(T)\|_{\LO p}^{p} &\leq \left[\|u^{\eps}_0\|_{\LO p} + \int_0^{T}\|f_i^{\eps}\|_{\LO p} \,dt \right]^{ p} \\
           &\leq \left[\|u^{\eps}_0\|_{\LO p} + \|f_i^{\eps}\|_{\LQ{p}}T^{\frac{(p-1)}{p}} \right]^{p} =: C_{T}.
         \end{split}
		\end{equation}
		That means
		\begin{equation}\label{eq-p-1}
		u_i^{\eps} \in L^{\infty}(0,T;L^{p}(\Omega)) \quad \text{ and } \quad \|u_i^{\eps}(T)\|_{\LO p}^{p} \leq C_{T}
		\end{equation}
		where $C_{T}$ defined in \eqref{eq-CT} grows at most polynomially in $T$. By integrating \eqref{eq-mu-ori} with respect to $t$ on $(0,T)$ and by using Young's inequality and the convention $r := b+p\geq p>1$, we get
		\begin{equation*}
		\begin{aligned}
		C(p,\lambda, M_i)\int^T_0\intO\left|\nabla\left((u_i^{\eps})^{\frac{r}{2}}\right)\right|^2 \,dxdt &\leq \|u_0^{\eps}\|_{\LO p}^{p} +
        p\int_0^T\|f_i^{\eps}\|_{\LO p}\|u_i^{\eps}\|_{\LO p}^{p-1} \,dt\\
		&\leq \|u_0^{\eps}\|_{\LO p}^{p} + p\|f_i^{\eps}\|_{L^{p}(Q_T)}\|u_i^{\eps}\|_{L^{p}(Q_T)}^{p-1}.
		\end{aligned}
		\end{equation*}
		By adding $C(p,\lambda, M_i)\int^T_0\intO (u_i^{\eps})^{r}  \,d x d t$ to both sides, we have
		\begin{equation}\label{eq-h0}
		\begin{aligned}	
        C(p,\lambda, M_i)&\int_0^T\|(u_i^{\eps})^{\frac{r}{2}}\|_{H^1(\Omega)}^2 \,dt\\
        & = C(p,\lambda, M_i)\int_0^T\left[
		\int_{\Omega}\left|\nabla\left((u_i^{\eps})^{\frac{r}{2}}\right)\right|^2 \,dx + \int_{\Omega}\left| (u_i^{\eps})^{\frac{r}{2}} \right|^2  \,dx\right]dt\\
		&\leq \|u_0^{\eps}\|_{\LO p}^{p} + p\|f_i^{\eps}\|_{L^{p}(Q_T)}\|u_i^{\eps}\|_{L^{p}(Q_T)}^{p-1} + C(p,\lambda, M_i)\int_0^T\|u_i^{\eps}\|_{\LO {r}}^{r} \,dt.
		\end{aligned}
		\end{equation}			
		By the Sobolev's embedding, we have
		\begin{equation}\label{eq-h1}
		C(p,\lambda, M_i)\int_0^T\|(u_i^{\eps})^{\frac{r}{2}}\|^2_{H^1(\Omega)}\,dt \geq C(p,\lambda, M_i) \,C_S^2\int_0^T
\|u_i^{\eps}\|_{\LO s}^{r} \,dt
		\end{equation}
with
        \begin{equation*}
		s =
        \begin{cases}
        \frac{r N}{N-2} &\text{ if } N \geq 3,\\
       r < s < \infty \text{ arbitrary } &\text{ if } N = 1,2.
        \end{cases}
		\end{equation*}
		On the other hand, by using the bound $\|u_i^{\eps}(t)\|_{\LO p}^{p} \leq C_{T}$ in \eqref{eq-p-1} and the interpolation inequality
		\begin{equation*}
		\|u_i^{\eps}\|_{\LO {r}} \leq \|u_i^{\eps}\|_{\LO p}^{\alpha}\|u_i^{\eps}\|_{\LO s}^{1-\alpha}
           \leq C_{T}^{\frac{\alpha}{p}}\|u_i^{\eps}\|_{\LO s}^{1-\alpha}
		\end{equation*}
      with
       \begin{equation*}
	   \frac{1}{r} = \frac{\alpha}{p} + \frac{1-\alpha}{s} \quad \text{and} \quad \alpha = \frac{2p}{2p+bN}\in(0,1],
		\end{equation*}
	we estimate {in the cases $b>0$ for which $\alpha<1$}
		\begin{equation}\label{eq-h2}
        \begin{split}
          C(p,\lambda, M_i)\int_0^T\|u_i^{\eps}\|_{\LO {r}}^{r} \,dt &\leq C(p,\lambda, M_i )\int_{0}^{T}C_{T}^{\frac{\alpha r}{p}} \|u_i^{\eps}\|_{\LO s}^{(1-\alpha)r} \,dt \\
          &\leq \frac{C(p,\lambda, M_i)\,C_S^2}{2}\int_0^T\|u_i^{\eps}\|_{\LO s}^{r} \,dt + CC_{T}^{\frac{r}{p}}T,
        \end{split}
		\end{equation}
		where we have used Young's inequality (with the exponents $1=(1-\alpha) + \alpha$) in the last step. {Note that if $b=0$, the bound \eqref{eq-h2} still holds true without the first term and with $\frac{r}{p}=1$.}
		Inserting \eqref{eq-h1} and \eqref{eq-h2} into \eqref{eq-h0} leads to
		\begin{equation}\label{eq-DT}
		\begin{aligned}
		\int_0^T\|u_i^{\eps}\|_{\LO s}^{r}dt &\leq \frac{2}{C(p,\lambda, M_i)\,C_S^2}\left[\|u_0^{\eps}\|_{\LO p}^{p} + p \|f_i^{\eps}\|_{L^{p}(Q_T)}\|u_i^{\eps}\|_{L^{p}(Q_T)}^{p-1}  + CC_{T,0}^{\frac{r}{p}}T\right]\\
		&\leq \frac{2}{C(p,\lambda, M_i)\,C_S^2}\left[\|u_0^{\eps}\|_{\LO p}^{p} + p\|f_i^{\eps}\|_{L^{p}(Q_T)} C_{T}^{\frac{p-1}{p}}  + CC_{T}^{\frac{r}{p}}T\right] \\
        &=: D_{T} \quad (\text{use (\ref{eq-p-1}))}.
		\end{aligned}
		\end{equation}		
		It follows that
		\begin{equation*}
		u^{\eps}_i \in L^{r}(0,T;L^{s}(\Omega))
		\end{equation*}
        with
        \begin{equation*}
		\begin{cases}
        s = \frac{r N}{N-2} &\text{ if } N\geq 3,\\
		r< s <\infty \text{ arbitrary } &\text{ if } N=1,2
		\end{cases}
		\end{equation*}
		and
		\begin{equation*}
		\int_0^T\|u_i^{\eps}\|_{\LO s}^{r} \,dt \leq D_{T}
		\end{equation*}
		with $D_{T}$ defined in \eqref{eq-DT}.\\ 		
Thus, we have
\begin{equation*}
 \|u^{\eps}_i\|_{L^{\infty}\left(0, T ; L^p(\Omega)\right)}+\|u^{\eps}_i\|_{L^{r}\left(0, T ; L^s(\Omega)\right)} \leq C_T,
\end{equation*}
where $r=b+p \geq p$ and $s=\frac{r N}{N-2}$ if $N \geq 3$ and $r<s<+\infty$ arbitrary if $N \leq 2$.\\
Using Lemma \ref{lem-key2}, we have
\begin{equation*}
  \|u^{\eps}_i\|_{L^{\tau}\left(Q_T\right)} \leq C_T \quad \text { with } \quad \tau=
  \begin{cases}
  \frac{N r +2 p}{N} & \text { if } N \geq 3, \\
  <r +p \text { arbitrary } & \text { if } N \leq 2 .
  \end{cases}
\end{equation*}
Direct calculations show that $\tau>2+\frac{4}{N}$ if $N \geq 2$ and $\tau>3$ if $N=1$. In particular,
\begin{equation}\label{eq-inf-9}
\|u^{\eps}_i\|_{L^{2+\frac{4}{N}}\left(Q_T\right)} \leq C_T \text { for } d \geq 2 \text { and }\|u^{\eps}_i\|_{L^3\left(Q_T\right)} \leq C_T \text { for } d=1 .
\end{equation}
From \eqref{eq-inf-3},
\begin{equation}\label{eq-inf-10}
\|v_1\|_{W(0, T)}^2 \leq C\left(\int_0^T\|v_1(t)\|^2_{\LO2} \,d t+\|v_1(0)\|^2_{\LO2} + \int_0^T \int_{\Omega} f^{\eps}_i v_1 \,dxdt\right).
\end{equation}
Since $k \geq 2\|u^{\eps}_0\|_{L^{\infty}(\Omega)},\|v_1(0)\|^2_{\LO2}=\|\left(u^{\eps}_0-\frac{k}{2}\right)_+\|^2_{\LO2}=0$.\\
Consider now the case $N \geq 2$. By using \eqref{eq-inf-1}, it yields
\begin{equation}\label{eq-inf-11}
\begin{aligned}
\int_0^T \int_{\Omega}\left|v_1\right|^2 \,d x d t & =\int_0^T \int_{\Omega} \mathbf{1}_{A_1}\left|v_1\right|^2 \,d x d t \leq\left(\frac{4}{k}\right)^{\frac{4}{N}} \int_0^T \int_{\Omega}\left|v_0\right|^{2+\frac{4}{N}} \,d x d t \\
& \leq\left(\frac{4}{k}\right)^{\frac{4}{N}}\|u^{\eps}_i\|_{L^{2+\frac{4}{N}}(Q_T)}^{2+\frac{4}{N}} \leq\left(\frac{4}{k}\right)^{\frac{4}{N}} C_T,
\end{aligned}
\end{equation}
recalling that $v_0=(u^{\eps}_i)_{+}$. Similarly to \eqref{eq-inf-6}, we get
\begin{equation}\label{eq-inf-12}
\int_0^T \int_{\Omega} f^{\eps}_i v_1 \,d x d t \leq\left(\frac{4}{k}\right)^{\sigma-1}\|f^{\eps}_i\|_{L^p\left(Q_T\right)}\|u^{\eps}_i \|_{L^{2+\frac{4}{N}}\left(Q_T\right)}^\sigma \leq\left(\frac{4}{k}\right)^{\sigma-1} C_T .
\end{equation}
From \eqref{eq-inf-10}, \eqref{eq-inf-11} and \eqref{eq-inf-12}, we get \eqref{eq-inf-8} if
$$
k=4 \max \left\{\left(\frac{C_T}{\eta}\right)^{\frac{N}{4}}, \left(\frac{C_T}{\eta}\right)^{\frac{1}{\sigma-1}}\right\}.
$$
Thus, with this choice of $k$, it follows that
$$
0=\lim _{j \rightarrow \infty} Y_j=\|(u^{\eps}_i-k)_{+}\|^2
$$
and hence,
$$
\|u^{\eps}_i\|_{L^{\infty}\left(Q_T\right)} \leq k=4 \max \left\{\left(\frac{C_T}{\eta}\right)^{\frac{N}{4}}, \left(\frac{C_T}{\eta}\right)^{\frac{1}{\sigma-1}}\right\}
$$
which is our desired estimate.\\
The proof for the case $N=1$ is very similar using
$$
\int_0^T \int_{\Omega}\left|v_1\right|^2 \,d x d t \leq \frac{4}{k} \int_0^T \int_{\Omega}\left|v_0\right|^3 \,d x d t \leq \frac{4}{k} C_T
$$
and
$$
\int_0^T \int_{\Omega} f^{\eps}_i v_1 \,d x d t \leq\left(\frac{4}{k}\right)^{\frac{4 \xi}{1+2 \xi}}\|f^{\eps}_i\|_{L^p\left(Q_T\right)}\|u^{\eps}_i\|_{L^3(Q_T)}^{1+\frac{4 \xi}{1+2 \xi}} \leq\left(\frac{4}{k}\right)^{\frac{4 \xi}{1+2 \xi}} C_T
$$
where $\xi=\frac{1}{2}(2p-3)>0$. We therefore omit the details.\\
Thus, we  completed the proof of Lemma \ref{lem-L-infty-bound}.
\end{proof}

\noindent{\bf Acknowledgement.} The authors would like to thank Prof. Pierluigi Colli, Prof. Gabriela Marinoschi, and Prof. Elisabetta Rocca for their inspired and fruitful discussion.

B.Q. Tang received funding from the FWF project ``Quasi-steady-state approximation for PDE'', number I-5213. J. Yang are partially supported by NSFC Grants
No. 12271227 and China Scholarship Council (Contract No. 202206180025).

\newcommand{\etalchar}[1]{$^{#1}$}

\end{document}